\documentclass{amsart}

\usepackage[left=25mm,right=25mm,top=25mm,bottom=25mm]{geometry}
\usepackage{graphicx}
\usepackage{bm}
\usepackage[
	pdftitle={},
	pdfauthor={},
	ocgcolorlinks,
	linkcolor=linkblue,
	citecolor=linkred,
	urlcolor=linkred]
{hyperref}
\usepackage{color}
\definecolor{linkred}{rgb}{0.75,0,0}
\definecolor{linkblue}{rgb}{0,0,0.75}
\usepackage{mathpazo}
\usepackage{microtype}
\usepackage{multicol}
\usepackage{booktabs}
\usepackage{latexsym}
\usepackage{multirow}
\usepackage{setspace}

\theoremstyle{plain}
\newtheorem{theorem}{Theorem}
\newtheorem{proposition}{Proposition}[section]
\newtheorem{lemma}[proposition]{Lemma}

\newtheorem{corollary}[proposition]{Corollary}

\newcommand{\bt}{\begin{theorem}}
\newcommand{\et}{\end{theorem}}

\theoremstyle{definition}
\newtheorem{example}[proposition]{Example}
\newtheorem{definition}[proposition]{Definition}
\newtheorem{remark}[proposition]{Remark}
\newcommand{\beq}{\begin{equation}}
\newcommand{\eeq}{\end{equation}}
\newcommand{\bl}{\begin{lemma}}
\newcommand{\el}{\end{lemma}}

\newcommand{\cal}{\mathcal}
\newcommand{\Res}{\mathop{\,\rm Res\,}}
\newcommand{\om}{\omega}
\newcommand{\bc}{\mathbb{C}}
\newcommand{\bn}{\mathbb{N}}

\newcommand{\ca}{\mathcal{A}}
\newcommand{\cb}{\mathcal{B}}
\newcommand{\cc}{\mathcal{C}}
\newcommand{\cd}{\mathcal{D}}

\newcommand{\cp}{{\cal P}}

\newcommand{\bp}{{\mathbb P}}
\newcommand{\diag}{\mathrm{diag}}
\newcommand{\modm}{\cal M}
\newcommand{\un}{1\!\!1}

\begin{document}
	
\title{Primary Invariants of Hurwitz Frobenius Manifolds}

\author[P.~Dunin-Barkowski]{P.~Dunin-Barkowski}
\address{P.~D.-B.:
	Faculty of Mathematics, National Research University Higher School of Economics, Usacheva 6, 119048 Moscow, Russia}
\email{ptdbar@hse.ru}

\author[P.~Norbury]{P.~Norbury}
\address{P.~N.: Department of Mathematics and Statistics, University of Melbourne, Australia 3010}
\email{pnorbury@ms.unimelb.edu.au}

\author[N.~Orantin]{N.~Orantin}
\address{N.~O.:  D\'epartement de math\'ematiques, 
Ecole Polytechnique F\'ed\'erale de Lausanne,
CH-1015 Lausanne,
Switzerland}
\email{nicolas.orantin@epfl.ch}

\author[A.~Popolitov]{A.~Popolitov}
\address{A.~P.: Korteweg-de Vries Institute for Mathematics, University of Amsterdam, Postbus 94248, 1090 GE Amsterdam, The Netherlands and ITEP, Moscow, Russia}
\email{A.Popolitov@uva.nl}

\author[S.~Shadrin]{S.~Shadrin}
\address{S.~S.: Korteweg-de Vries Institute for Mathematics, University of Amsterdam, Postbus 94248, 1090 GE Amsterdam, The Netherlands}
\email{S.Shadrin@uva.nl}

\subjclass[2010]{Primary 53D45; Secondary 32G15, 81T45.}

\keywords{Frobenius manifolds, spectral curve topological recursion, Hurwitz spaces}

\begin{abstract}
Hurwitz spaces parameterizing covers of the Riemann sphere can be equipped with a Frobenius structure. In this review, we recall the construction of such Hurwitz Frobenius manifolds as well as the correspondence between semisimple Frobenius manifolds and the topological recursion formalism. We then apply this correspondence to Hurwitz Frobenius manifolds by explaining that the corresponding primary invariants can be obtained as periods of multidifferentials globally defined on a compact Riemann surface by topological recursion. Finally, we use this construction to reply to the following question in a large class of cases: given a compact Riemann surface, what does the topological recursion compute?

%
%
%
%
\end{abstract}

\maketitle

\tableofcontents

\section{Introduction}

Consider a diagonal flat metric on a complex manifold $M$ with local coordinates $u=(u_1,...,u_n)$ 
\beq \label{diagmet}
ds^2=\sum_{i=1}^N\eta_i(u)du_i^2\quad \text{is flat},
\eeq
generated by a potential $H:M\to\bc$
\beq  \label{potmet}
\eta_i(u)=\partial_{u_i}H,\quad i=1,...,N.
\eeq
Metrics satisfying \eqref{diagmet} and \eqref{potmet} are known as Darboux-Egoroff metrics \cite{Dub94}.  Condition \eqref{diagmet} is equivalent to a nonlinear PDE in $\eta_i(u)$ which gives vanishing of the Riemann curvature tensor $R_{ijkl}=0$.  
The PDE becomes integrable when condition \eqref{potmet} is added.  By a metric we mean a smooth family of complex non-degenerate symmetric bilinear forms on the tangent space $T_mM$, so in particular it is not Riemannian.

Analogous to K. Saito's construction \cite{SaiPer} of flat coordinates on unfolding spaces of singularities, Dubrovin \cite{Dub94} and Krichever \cite{KriAlg} produced beautiful families of Darboux-Egoroff metrics on moduli spaces of pairs $(\Sigma,x)$ consisting of an algebraic curve $\Sigma$ equipped with a meromorphic function $x:\Sigma\to\bc$.  Such a pair $(\Sigma,x)$ is a point in a Hurwitz space $H_{g,\mu}$ which parametrises covers $x:\Sigma\to\mathbb{P}^1$ of genus $g$ with points above infinity marked and with fixed ramification profile $(\mu_1,\dots,\mu_d)$.  Further, choose a symplectic basis of cycles $({\cal A}_i,{\cal B}_i)_{i=1, \dots,g}$ on $\Sigma$ to define a point in a cover $\widetilde{H}_{g,\mu}$ of a Hurwitz space.  Namely, $\widetilde{H}_{g,\mu}$ consists of the data of a point in a Hurwitz space together with the data of a Torelli marking. One goes from one sheet of the cover $\widetilde{H}_{g,\mu}$ to another one through the action of modular transformations $Sp(2g,\mathbb{Z})$. 
\begin{definition}  \label{def:gencyc}
Given $(\Sigma,x,\{ {\cal A}_i,{\cal B}_i\}_{i=1,...,g})$ 
define a set of generalised contours $\cd$ on $\Sigma$ as follows.  Choose representatives for $\{{\cal A}_i,{\cal B}_i\}_{i=1,...,g}$ in $H_1(\Sigma\setminus x^{-1}(\infty))$ and choose a set of relative homology classes $\gamma_i\in H_1(\Sigma,x^{-1}(\infty))$, $i=2,...,d$ such that $\gamma_i\subset\Sigma\setminus\{\ca_1,...\ca_g,\cb_1,...,\cb_g\}$ runs from $\infty_i$ to $\infty_1$ where the poles of $x$ are given by $x^{-1}(\infty)=\{\infty_1,...,\infty_d\}$ with respective orders $\{\mu_1,...,\mu_d\}$.  Let $\cc_{\infty_i}$, $i=1,...,d$ be small circles around each pole $\infty_i$ of $x$.  Then define
\beq   \label{gencon}
\cd=\{x\ca_1,...,x\ca_g,\cb_1,...,\cb_g\} \bigcup_{ i=2,...,d} \{\gamma_i,x\cc_{\infty_i}\} \bigcup_{\begin{array}{c}
k=1,...,\mu_i-1, \cr
 j=1,...,d
 \end{array}} \{x^{k/\mu_j}\cc_{\infty_j}\}.
\eeq
\end{definition}

\noindent If $x$ has only simple poles, so each $\mu_i=1$, then the contours are built out of classes in $H_1(\Sigma\setminus x^{-1}(\infty))$ and $H_1(\Sigma,x^{-1}(\infty))$.  A contour $\cc$ acts on a differential $\omega$ by $\omega\mapsto\int_{\cc}\omega$, and by $x\cc$ we mean $\omega\mapsto\int_{x\cc}\omega:=\int_{\cc}x\omega$.  We often enumerate the elements of $\cd$ by $\cc_{\alpha}\in\cd$ for $\alpha=1,...,N=|\cd|$.  Note that $N=\dim H_{g,\mu}$---see \eqref{flat=can}.

\begin{definition}  \label{bergman}
On any compact Riemann surface $(\Sigma,\{ {\cal A}_i\}_{i=1,...,g})$ with a given set of $\cal A$-cycles, define a {\em Bergman kernel} $B(p,p')$ to be a
symmetric bidifferential, i.e. a tensor product of differentials on $\Sigma\times\Sigma$, uniquely defined by the properties that it has a double pole on the diagonal of zero residue, double residue equal to $1$, no further singularities and normalised by $\int_{p\in{\cal A}_i}B(p,p')=0$, $i=1,...,g$.  It satisfies the Cauchy property for any meromorphic function $f$ on $\Sigma$
\beq  \label{cauchy}
df(p)=\Res_{p'=p}f(p')B(p,p').
\eeq
\end{definition}

On $\Sigma$, choose a Bergman kernel $B(p,p')$ normalised to have zero periods over the $\cal A$-cycles in the Torelli marking $\{ {\cal A}_i,{\cal B}_i\}$.  For any $\cc_{\alpha}\in\cd$ define a {\em primary differential} by 
\beq \label{primdif}
\phi_{\alpha}(p)=\oint_{p'\in\cc_{\alpha}^*}B(p,p')
\eeq
where $\cc_{\alpha}^*$ is a cycle dual to $\cc_{\alpha}$ defined in section 4. Each primary differential is locally holomorphic on $\Sigma\setminus x^{-1}(\infty)$.


Denote by $\cp_i\in\Sigma$ the finite critical points of $x$, i.e. $dx(\cp_i)=0$.  For a generic point in $\widetilde{H}_{g,\mu}$ the critical points of $x$ are simple and the critical values $u_i=x(\cp_i)$, $i=1,...,N$ of $x$ are local coordinates in the open dense domain of $\widetilde{H}^s_{g,\mu}\subset\widetilde{H}_{g,\mu}$ defined by $u_i \neq u_j$ for $i \neq j$ and $\{\ca_1,...\ca_g,\cb_1,...,\cb_g\}$ avoid $x^{-1}(\infty)$.  

Define a metric on $\widetilde{H}^s_{g,\mu}$ by
\beq  \label{metric}
\eta=\sum_{i=1}^Ndu_i^2\cdot\Res_{\cp_i}\frac{\phi \cdot \phi}{dx}
\eeq
for any choice of primary differential $\phi=\phi_{\alpha}$ on $\Sigma$ obtained from $\cc_{\alpha}\in\cd$ via \eqref{primdif}.  
\begin{theorem}[Dubrovin \cite{Dub94}]
(i) The metric $\eta$ defined in \eqref{metric} is flat with local flat coordinates given by
\beq  \label{flat}
t_{\beta}=\int_{\cc_{\beta}}\phi,\quad \cc_{\beta}\in\cd,\quad\beta=1,...,N
\eeq
i.e. the metric is constant with respect to the coordinates $t_{\beta}$.  

(ii) The flat metric $\eta$ forms part of a Frobenius manifold structure on $\widetilde{H}_{g,\mu}$ with multiplication on the tangent space of $\widetilde{H}^s_{g,\mu}$ defined using the local basis of vector fields $\partial_{u_i}$ by
\beq  \label{canprod}
 \partial_{u_i}\cdot \partial_{u_j}= \delta_{ij} \partial_{u_i}.
\eeq
\end{theorem}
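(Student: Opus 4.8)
The plan is to work throughout in the canonical coordinates $u_i=x(\cp_i)$ and to show that \eqref{metric} is a flat Darboux-Egoroff metric with the stated flat coordinates, after which the Frobenius structure follows almost formally. First I would record the local picture at a simple critical point $\cp_i$: since $dx(\cp_i)=0$ is a simple zero, $\lambda_i:=\sqrt{2(x-u_i)}$ is a local coordinate with $x=u_i+\tfrac12\lambda_i^2$ and $dx=\lambda_i\,d\lambda_i$. Writing the chosen primary differential as $\phi=(H_i+O(\lambda_i))\,d\lambda_i$ near $\cp_i$, the residue in \eqref{metric} evaluates to $\Res_{\cp_i}\phi^2/dx=H_i^2$, so $\eta=\sum_i H_i^2\,du_i^2$ is diagonal with Lamé coefficients $H_i=\phi/d\lambda_i|_{\cp_i}$. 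Flatness together with the potential property \eqref{potmet} then reduces to the Darboux-Egoroff system for the rotation coefficients $\beta_{ij}:=(\partial_{u_j}H_i)/H_j$: the symmetry $\beta_{ij}=\beta_{ji}$, the Darboux equations $\partial_{u_k}\beta_{ij}=\beta_{ik}\beta_{kj}$ for distinct $i,j,k$, and the homogeneity $\sum_k\partial_{u_k}\beta_{ij}=0$.

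The engine of the whole argument is the Rauch variational formula
\beq
\partial_{u_i}B(p,p')=\tfrac12\Res_{q=\cp_i}\frac{B(p,q)B(q,p')}{dx(q)},
\eeq
which I take as known and which encodes the deformation of the complex structure as $u_i$ varies. Evaluating the residue in the coordinate $\lambda_i$ shows that every $u_i$-derivative factors through $\omega_i(p):=B(p,q)/d\lambda_i(q)|_{q=\cp_i}$; applying this to $\phi(p)=\oint_{\cc_\alpha^*}B(p,p')$ and using \eqref{primdif} once more gives the clean formulas $\partial_{u_j}\phi=\tfrac12 H_j\,\omega_j$ and $\partial_{u_i}\omega_j=\beta_{ij}\,\omega_i$ (for $i\neq j$), together with the identification $\beta_{ij}=\tfrac12\,B(\cp_i,\cp_j)/(d\lambda_i\,d\lambda_j)$. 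Symmetry of the Bergman kernel now gives $\beta_{ij}=\beta_{ji}$, which is exactly the Egoroff/potential property; a second application of Rauch to $B(\cp_i,\cp_j)$ gives the Darboux equations; and the invariance of $(\Sigma,B)$ under the global shift $x\mapsto x+c$ --- which fixes $\cp_i$, $\lambda_i$ and $B$ --- gives $\sum_k\partial_{u_k}\beta_{ij}=0$. In particular the second Lamé equation $\partial_i\beta_{ij}+\partial_j\beta_{ij}+\sum_{k\neq i,j}\beta_{ik}\beta_{kj}=0$ follows by combining the last two identities with the symmetry, so $R=0$.

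For the flat coordinates of part (i) I would show each $t_\beta=\int_{\cc_\beta}\phi$ is covariantly constant, i.e. that the frame $\psi_{i\beta}:=\partial_{u_i}t_\beta/H_i$ is a flat section. Differentiating the period (legitimate since the contours avoid the critical points, so the $x$-weight in the classes $x\cc$ is inert under $\partial_{u_i}$) and inserting $\partial_{u_j}\phi=\tfrac12 H_j\omega_j$ gives $\partial_{u_i}t_\beta=H_i\psi_{i\beta}$ with $\psi_{i\beta}=\tfrac12\int_{\cc_\beta}\omega_i$, and $\partial_{u_i}\omega_j=\beta_{ij}\omega_i$ then yields the off-diagonal equations $\partial_{u_i}\psi_{j\beta}=\beta_{ij}\psi_{i\beta}$. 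The diagonal half of the flat-section system is equivalent to the $e$-invariance $\sum_k\partial_{u_k}\psi_{i\beta}=0$, i.e. to the unit field $e=\sum_i\partial_{u_i}$ (acting as $x\mapsto x+c$) sending each $t_\beta$ to a constant; this is precisely where the design of $\cd$ enters, since the Bergman $\ca$-normalisation forces $\oint_{\ca_i}\phi=0$ (making the weighted classes $x\ca_i$ behave correctly) while the fixed singular parts of the second- and third-kind primary differentials at $x^{-1}(\infty)$ control the classes built from $\gamma_i$, $\cc_{\infty_i}$ and $x^{k/\mu_j}\cc_{\infty_j}$. Given the full flat-section system and $\beta_{ij}=\beta_{ji}$, a short telescoping computation shows the inverse metric $\eta^{\alpha\beta}=\sum_i\psi_{i\alpha}\psi_{i\beta}$ is $u$-independent, so the $t_\beta$ are indeed flat coordinates.

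Finally, for part (ii) the product \eqref{canprod} is manifestly commutative, associative and has unit $e=\sum_i\partial_{u_i}$, and the Frobenius compatibility $\eta(a\cdot b,c)=\eta(a,b\cdot c)$ is automatic because both $\eta$ and the product are diagonal in the basis $\partial_{u_i}$; the only substantive axiom left is the existence of a potential $F$ whose third derivatives equal the structure constants in the flat coordinates, which follows from the general correspondence (Dubrovin \cite{Dub94}) between flat Egoroff metrics equipped with the canonical semisimple product and Frobenius manifolds, the grading being supplied by the Euler field $E=\sum_i u_i\partial_{u_i}$. I expect two steps to carry the real difficulty: the careful variational bookkeeping at the simple critical points (fixing the normalisations in the Rauch formula and the treatment of the weighted classes and of the poles of $x$ in the $e$-invariance argument), and --- more seriously --- the non-degeneracy of the $N\times N$ period matrix $(\psi_{i\beta})$, which is what guarantees that the $t_\beta$ are genuine coordinates and is exactly the global statement that the contour set of Definition \ref{def:gencyc} is engineered to ensure.
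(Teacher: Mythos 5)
Your proposal is correct in outline, but it takes a genuinely different route from the paper's own argument. You re-derive the classical Darboux--Egoroff apparatus: Lam\'e coefficients $H_i=\phi(\cp_i)$, rotation coefficients $\beta_{ij}$ read off from $B(\cp_i,\cp_j)$, symmetry from the symmetry of $B$, the Darboux equations \eqref{rotcoeffPDE} from a second application of Rauch, unit-invariance from the shift $x\mapsto x+c$, and then the flat-section system for the period matrix $\psi_{i\beta}$ to get constancy of $\eta^{\alpha\beta}=\sum_i\psi_{i\alpha}\psi_{i\beta}$; this is essentially Dubrovin's original cited proof. The paper instead proves flatness by a much shorter argument (the unlabeled lemma in Section~\ref{sec:rauch}): by Rauch \eqref{Rauch}, $\partial_{u_i}\phi_{\cc}(p)=\phi_{\cc}(\cp_i)B(p,\cp_i)$, so $\partial_{u_j}\langle\phi_{\cc},\phi_{\cc'}\rangle_{\phi}$ is a sum over \emph{all} poles of the residues of a global meromorphic differential on the compact curve $\Sigma$ (the poles at $x^{-1}(\infty)$ are removable since each $\phi_{\cc}$ is dominated by $dx$ there), hence vanishes; combined with the identification $\partial_{t_\alpha}\leftrightarrow\phi_\alpha$ (Lemma~\ref{varprimdif}, Proposition~\ref{psisig}) this gives constancy of the metric in the $t$-frame with no rotation coefficients at all, and the rest is deferred to \cite{Dub94}. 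Your route is more self-contained and produces \eqref{rotcoeffPDE} which the paper needs later anyway (Section~\ref{sec:Rmatrix}); the paper's route buys brevity by exploiting compactness through the residue theorem. One bookkeeping caution: you write Rauch with a factor $\tfrac12$ while using $\lambda_i=\sqrt{2(x-u_i)}$; in that convention the $\tfrac12$ is absent (cf.\ \eqref{Rauch} and Definition~\ref{evaluationform}), and since the Darboux equations are quadratic in $\beta$ these factors do not cancel automatically, so the normalisations must be fixed consistently. Like the paper, you defer the non-degeneracy of $(\psi_{i\beta})$ and the prepotential of part (ii) to Dubrovin, which is on par with the paper's own level of completeness.
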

The theorem was proven by Dubrovin in \cite{Dub94} using a definition of primary differential via deformations of $y_\alpha dx$---see Lemma~\ref{varprimdif}.   As stated here we use an equivalent definition of primary differential \eqref{primdif} proven by Shramchenko \cite{ShrRea}.\\

Recall that a Frobenius manifold $M$ comes equipped with a a flat metric $\eta$ together with a commutative, associative product $\cdot$ on its tangent space satisfying the compatibility condition $\eta(u\cdot v,w)=\eta(u,v\cdot w)$ for all $u,v,w\in T_pM$.  
Associated to each semi-simple point $p$ of a Frobenius manifold $M$ is a cohomological field theory \cite{FSZTau,GiventalSem,ManFro,Teleman} defined on $(H,\eta)=(T_pM,\eta|_{T_pM})$, which is a sequence of $S_n$-equivariant maps 
\[ I_{g,n}:H^{\otimes n}\to H^*(\overline{\modm}_{g,n})\]
that satisfy gluing conditions on boundary divisors in $\overline{\modm}_{g,n}$ given explicitly in Section~\ref{sec:cohft}.  For any collection of vectors $v_1,...,v_n\in T_pM$, the integral $\int_{\overline{\modm}_{g,n}}I_{g,n}(v_1\otimes...\otimes v_n)\in\bc$ (which is a function of $p\in M$) is known as a primary invariant of $M$.

Recently, \cite{DOSS12} explained that one can compute the primary (and ancestor) invariants of a semisimple Frobenius manifold efficiently using the topological recursion procedure of \cite{EOInv}. The present review explains how this result can be applied to Hurwitz Frobenius manifolds.

The main observation of this text is  that just as the flat coordinates can be obtained as periods of a primary differential along cycles taken from $\cd$ via \eqref{flat}, the primary  invariants of the Hurwitz Frobenius manifolds can also be obtained as periods along cycles taken from $\cd$.  Since we need multiple insertions of vectors into the primary invariants, we need to take periods of symmetric {\em multidifferentials} on $\Sigma$ which are tensor products of differentials on $\Sigma^n=\Sigma\times...\times\Sigma$.  
\begin{theorem}  \label{th:main0}
Given a point $p=(\Sigma,x,\{ {\cal A}_i,{\cal B}_i\})\in\widetilde{H}^s_{g,\mu}$ and a choice of primary differential $dy_{\alpha}$ that determines a Frobenius manifold structure on $\widetilde{H}^s_{g,\mu}$, there exist multidifferentials $\omega_{g,n}$ defined on $\Sigma$ whose periods along contours in $\cd$ give the primary invariants of the Frobenius manifold at $p$.  More precisely, for flat coordinates $\{t_1,...,t_N\}$, put $e_{\alpha}=\partial_{t_{\alpha}}$ and define the dual vector with respect to the metric \eqref{metric} by $e^{\alpha}=\displaystyle\sum_{\beta}\eta^{\alpha\beta}e_{\beta}$.  Then
$$
\int_{\cc_{\alpha_1}}...\int_{\cc_{\alpha_n}}\omega_{g,n}=\int_{\overline{\modm}_{g,n}}I_{g,n}\Big(e^{\alpha_1}\otimes ...\otimes e^{\alpha_n}\Big)
$$
where $\cc_{\alpha_i}$ and $e_{\alpha_i}=\partial_{t_{\alpha_i}}$ are related by \eqref{flat}
\end{theorem}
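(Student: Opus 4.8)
The plan is to route everything through the identification theorem of \cite{DOSS12}. First I would promote the data of the theorem to a spectral curve in the sense of \cite{EOInv}: take the Riemann surface $\Sigma$, the branched cover $x$, the Bergman kernel of Definition~\ref{bergman} as $\omega_{0,2}=B$, and the one-form $\omega_{0,1}=y_\alpha\,dx$ whose differential is the chosen primary differential $dy_\alpha=\phi_\alpha$. Running topological recursion produces the multidifferentials $\omega_{g,n}$, whose only singularities are at the critical points $\cp_i$ of $x$. In parallel, at the semisimple point $p$ the Frobenius structure of Dubrovin's theorem (\cite{Dub94}) above determines, through Givental--Teleman reconstruction \cite{GiventalSem,Teleman}, the cohomological field theory $I_{g,n}$ on $(T_pM,\eta)$, whose primary invariants are the $\psi$-degree-zero ancestor correlators. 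I would anchor the dictionary between the two sides at the unstable and two-point levels: the flat coordinates \eqref{flat} match $\omega_{0,1}$ by Dubrovin's theorem, while $\omega_{0,2}=B$ encodes the metric, since combining the defining relation \eqref{primdif} with the Riemann bilinear relations and the duality $\cc_\alpha\leftrightarrow\cc_\alpha^{*}$ of Section~4 shows that $\int_{\cc_\alpha}\int_{\cc_\beta}\omega_{0,2}$ reproduces the flat metric of \eqref{metric}.

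Next I would invoke \cite{DOSS12} in its local form. Because the canonical product \eqref{canprod} is exactly the idempotent structure supplied by the simple critical points of $x$, the local spectral-curve data near each $\cp_i$ reproduces the canonical frame of the Frobenius manifold, and the expansion of $\omega_{g,n}$ near the ramification points in the auxiliary differentials $d\xi_i^{k}$ has the ancestor correlators of $I_{g,n}$, written in the normalized canonical basis $\{e_i\}$, as its coefficients:
\[
\omega_{g,n}=\sum_{\substack{i_1,\dots,i_n\\ k_1,\dots,k_n\ge 0}}\Big(\int_{\overline{\modm}_{g,n}}I_{g,n}(e_{i_1}\otimes\cdots\otimes e_{i_n})\,\psi_1^{k_1}\cdots\psi_n^{k_n}\Big)\,\prod_{j=1}^{n} d\xi_{i_j}^{k_j}.
\]
The primary invariants occupy the slot $k_1=\cdots=k_n=0$. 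The theorem then reduces to a purely geometric \emph{period lemma}: for every $\cc_\alpha\in\cd$ and every $\cp_i$,
\[
\int_{\cc_\alpha}d\xi_i^{k}=0\ \ (k\ge 1),\qquad \int_{\cc_\alpha}d\xi_i^{0}=M_{\alpha i},
\]
where $M_{\alpha i}$ is the change of basis carrying the canonical frame $\{e_i\}$ to the metric-dual flat frame $\{e^{\alpha}\}$. Granting this, integrating the displayed expansion term by term over $\cc_{\alpha_1},\dots,\cc_{\alpha_n}$ factorizes across the arguments, annihilates every descendant contribution, and collapses the canonical sum to $\int_{\overline{\modm}_{g,n}}I_{g,n}(e^{\alpha_1}\otimes\cdots\otimes e^{\alpha_n})$.

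To prove the period lemma I would work family by family through Definition~\ref{def:gencyc}. Each $d\xi_i^{k}$ has its only pole at $\cp_i$, of even order $2k+2$ and zero residue, so in the local coordinate $\zeta_i$ with $x-u_i=\tfrac12\zeta_i^2$ only even powers of $\zeta_i$ occur. For $k\ge1$ the vanishing of the $\ca$-twisted and circle periods $x\ca_i$, $x\cc_{\infty_i}$ and the fractional-power circles around the poles of $x$ then follows from the normalization of $B$ together with residue computations at $\cp_i$ and at $x^{-1}(\infty)$, where the $x$-weighting shifts the pole orders so that the relevant residues cancel; the $\cb_i$- and $\gamma_i$-periods are handled by the Riemann bilinear relations. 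For $k=0$ I would use the variational formula of Lemma~\ref{varprimdif} to express $d\xi_i^{0}$ through the Bergman kernel localized at $\cp_i$ and pair it, via \eqref{primdif} and the residue formula \eqref{metric}, against $\cc_\alpha$; the two copies of the primary differential in \eqref{metric} are exactly what raise the canonical index and produce the metric-dual $e^{\alpha}$ in $M_{\alpha i}$.

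I expect the main obstacle to be precisely the descendant vanishing. The globally normalized second-kind differentials $d\xi_i^{k}$ have a priori nonzero $\cb$-periods—by the Riemann bilinear relations these are Taylor coefficients of the holomorphic differentials at $\cp_i$—so the vanishing cannot hold for an arbitrary contour, and the argument genuinely relies on the specific data built into \eqref{gencon}: the freedom to choose representatives of $\{\ca_i,\cb_i\}$ avoiding $x^{-1}(\infty)$, the relative cycles $\gamma_i$, and above all the $x$-twists $x\ca_i$, $x\cc_{\infty_i}$ and $x^{k/\mu_j}\cc_{\infty_j}$. The delicate point is to verify simultaneously, across all five families, both that the descendant differentials pair to zero with every contour in $\cd$ while the primary differentials pair non-degenerately, and that $M_{\alpha i}$ reproduces the metric duality with the correct constants rather than merely up to scale.
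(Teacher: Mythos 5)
Your overall architecture is the same as the paper's: build the spectral curve $(\Sigma,x,y_\alpha,B)$, expand $\omega_{g,n}$ via the identification of \cite{DOSS12} with ancestor invariants as coefficients, then prove a period lemma saying that integration over contours in $\cd$ annihilates descendants and inserts the correct flat-frame vectors (this last part is exactly the content of Propositions~\ref{thmRBI} and~\ref{maincont}, with your matrix $M_{\alpha i}$ being $\sum_\beta\eta^{\alpha\beta}\Psi^i_\beta$ from Proposition~\ref{psisig}). However, there is a genuine gap at the central step. You justify the claim that the coefficients in the expansion are the ancestor correlators of the CohFT attached to \emph{Dubrovin's} Frobenius structure by saying that the canonical product \eqref{canprod} matches the idempotent structure at the simple critical points of $x$. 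Matching idempotents only identifies the Frobenius algebra at the point, i.e.\ the degree-zero (TQFT) part. Theorem~\ref{th:DOSS} is a map from an $R$-matrix (plus the constants $\eta_i$) to a \emph{local} spectral curve via Laplace transform; to recognize the given \emph{global} curve as the image of the Hurwitz CohFT one must prove that the Laplace transform of the global Bergman kernel $B$ equals $R^{-1}(z)$ of Dubrovin's Frobenius manifold, i.e.\ condition \eqref{eq:Identification-MatrixR}. That is precisely Shramchenko's theorem (Theorem~\ref{th:shr}), the key nontrivial input in the paper's proof of Theorem~\ref{th:main}, and nothing in your proposal plays its role. The point is not cosmetic: the $R$-matrix, hence the whole higher-genus theory, depends on $B$ through its normalisation, and Shramchenko's deformations $\omega_{0,2}^{[\kappa]}$ yield different Frobenius structures with the same critical points and the same idempotents, so no argument that only sees the canonical product can pin down which CohFT the recursion computes. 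You also omit the verification that the curve lies in the image of \eqref{DOSS} at all: compatibility \eqref{eq:Identification-Dilaton} holds because $dy_\alpha=\oint_{\cc}B$ makes the curve dominant (Corollary~\ref{spec2cohft}), and \eqref{shape} holds by Eynard's factorisation (Lemma~\ref{eynlem}); both checks are needed before \eqref{TRCohFT} can be asserted.

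A smaller but real confusion sits inside your period lemma. The differentials appearing in \eqref{TRCohFT} are the $V^i_k$ of \eqref{Vdiff}: for $k\geq 1$ they are exact, $V^i_k=d\bigl(V^i_{k-1}/dx\bigr)$ with $V^i_{k-1}/dx$ a globally defined meromorphic function whose poles lie at \emph{all} the $\cp_j$, so their periods over closed geometric cycles vanish trivially. Your worry about nonzero $\cb$-periods applies to the classically normalised second-kind differentials with pole only at $\cp_i$, which are not the objects in the expansion. The genuine difficulty is concentrated in the $x$-weighted and relative contours of \eqref{gencon}, and the paper's mechanism there is worth internalising: Proposition~\ref{thmRBI} converts $\int_{\cc}$ into the residue operator $\sum_i\Res_{\cp_i}f_{\cc}\,\cdot$ by a case-by-case Riemann bilinear relations argument over the five types of contours; after that, descendant vanishing follows from integration by parts and the global residue theorem, using that the poles of $\phi_\alpha$ are dominated by those of $dx$ at $x^{-1}(\infty)$, while the $k=0$ pairing is the Cauchy-kernel evaluation $\Res_{\cp_j}y_\alpha V^i_0=\delta_{ij}\phi_\alpha(\cp_j)=\Psi^i_\alpha$. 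Your direct family-by-family attack on each $V^i_k$ could be made to work along these lines, but only once the identification gap above is filled; until then the coefficients you are extracting have not been identified with the invariants of any particular Frobenius manifold.
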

Theorem~\ref{th:main0} is a consequence of the more general Theorem~\ref{th:main} that proves that the $\omega_{g,n}$ store all ancestor invariants of the Frobenius manifold, using a larger class of cycles than those in $\cd$.  
\begin{remark}
The statement and conclusion of Theorem~\ref{th:main0} can be made for any point in $\widetilde{H}_{g,\mu}$ not just the semisimple points $\widetilde{H}^s_{g,\mu}\subset\widetilde{H}_{g,\mu}$.  It would be interesting to prove the theorem with these weaker hypotheses.  There are candidate multidifferentials, such as those defined in \cite{BouchardEynard} where the zeros of $dx$ are not required to be simple, or in the case of Dubrovin's superpotential, studied from the perspective of topological recursion in \cite{DNOPS}, which applies to {\em any} semi-simple Frobenius manifold, and where there may be multiple zeros of $dx$ above a critical value.
\end{remark}

The multidifferentials $\omega_{g,n}$ in Theorem~\ref{th:main0} are obtained  from the topological recursion procedure associated to the spectral curve $(\Sigma,x,y_{\alpha},B)$ where $B=B(p,p')$ is the Bergman kernel defined in Definition~\ref{bergman} using the Torelli marking and $y_\alpha$ is a function defined on $\Sigma\setminus\{ {\cal A}_i,{\cal B}_i\}$ such that locally $\phi_\alpha = d y_\alpha$ . In general \cite{EOInv}, the $\omega_{g,n}$ are a family of symmetric multidifferentials on the spectral curve that encode solutions of a wide array of problems from mathematical physics, geometry and combinatorics.  By a spectral curve\footnote{The term spectral curve is inherited from the matrix model origin of this formalism. In the general formalism, this term is expected to make sense due to the probable existence of an associated integrable system.} we mean the data of $(\Sigma,x,y,B)$ given by a Riemann surface $\Sigma$ equipped with a meromorphic function $x$ and a locally defined meromorphic function $y\colon \Sigma\to \mathbb{C}$ such that the zeros of $dx$ given by $\{\cp_1,...,\cp_N\}$ are simple and $dy$ is analytic and non-vanishing on $\{\cp_1,...,\cp_N\}$, and equipped with a symmetric bidifferential $B$ on $\Sigma\times\Sigma$, with a double pole on the diagonal of zero residue, double residue equal to $1$, and no further singularities. The spectral curve may be a collection of $N$ open disks, known as a {\em local spectral curve}, because $\omega_{g,n}$ are defined using only local information about $x$, $y$ and $B$ around zeros of $dx$---see Section~\ref{sec:TR}.   On a compact spectral curve we relax the condition on $y$ being globally defined, and instead require that $dy$ is a locally defined meromorphic differential (a connection) ambiguous up to $dy+df(x)$ for any rational function $f$.  This gives rise to a locally defined function $y$ on $\Sigma$ which is sufficient to apply topological recursion.

In \cite{DOSS12,MilanovAncestor}, it was proven that, starting from a semi-simple  CohFT, or equivalently a semi-simple Frobenius manifold $M$, it is possible to compute its correlation functions by the topological recursion procedure applied to a specific local spectral curve:
\beq \label{DOSS}
\{\text{semisimple CohFT}\}\longrightarrow\{\text{topological recursion applied to a local spectral curve}\}
\eeq
Under this correspondence whose details are reviewed in Section~\ref{sec:DOSS}, the number of zeros of $dx$ on the local spectral curve $(\Sigma,x,y,B)$ is equal to the dimension $N$ of the Frobenius manifold $M$.
  It was then proven in \cite{DNOPS} that, under some additional assumptions on the Frobenius manifold $M$, it is possible to arrange that the image of \eqref{DOSS} is a {\em compact} spectral curve producing the same correlation functions.  This compact Riemann surface is given by Dubrovin's superpotential \cite{Dub94,Dub98} which is a family of compact Riemann surfaces parametrised by the semi-simple points of $M$ and constructed out of flat coordinates of a pencil of metrics on $M$.

We can now try to reverse the direction of the arrow in \eqref{DOSS}.  Given a {\it compact} spectral curve, when does it lie in the image of \eqref{DOSS}, and can we reconstruct the corresponding CohFT (or, equivalently, the Frobenius manifold)?   The following theorem answers this question.  It begins with the observation that a compact spectral curve gives a point $(\Sigma,x)$ in a Hurwitz space $H_{g,\mu}$.


\begin{theorem}  \label{th:main}
Given a generic point $(\Sigma,x)\in H_{g,\mu}$, equip it with a bidifferential $B$ normalised over a given set of 
$\cal A$-cycles, and choose $\cc\in\cd$ to define a locally defined function $y$ on $\Sigma$ by $dy(p):=\oint_\cc B(p,p')$.  The topological recursion procedure applied to the spectral curve $(\Sigma,x,y,B)$ computes the ancestor invariants of the CohFT associated to Dubrovin's Frobenius manifold structure on the cover $\widetilde{H}^s_{g,\mu}$ via:
\beq  \label{TRCohFT}
\omega_{g,n}(p_1,...,p_n)=\sum_{\substack{i_1,\dots,i_n \\ d_1,\dots,d_n}} \int_{\overline{\mathcal{M}}_{g,n}} I_{g,n}(e_{i_1},\dots,e_{i_n}) \prod_{j=1}^n
\psi_j^{d_j} \cdot \bigotimes_{j=1}^n V^{i_j}_{d_j}(p_j)
\eeq
where $V^i_k(p)$, $i=1,...,N$, $k=0,1,...$, are canonical differentials on $\Sigma$ defined by \eqref{Vdiff} in Section~\ref{sec:DOSS}.
\end{theorem}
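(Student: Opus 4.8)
The plan is to reduce the statement to the local spectral curve correspondence of \cite{DOSS12}, exploiting the locality of topological recursion. The multidifferentials $\omega_{g,n}$ produced by topological recursion depend only on the germs of $x$, $y$ and $B$ in a neighbourhood of the zeros $\{\cp_1,\dots,\cp_N\}$ of $dx$, so the compact spectral curve $(\Sigma,x,y,B)$ and the local spectral curve obtained by restricting to $N$ disks around the $\cp_i$ produce identical $\omega_{g,n}$. It therefore suffices to identify this local spectral curve with the one that \cite{DOSS12} associates to Dubrovin's Frobenius manifold structure on $\widetilde{H}^s_{g,\mu}$, and then to quote the ancestor reconstruction formula of \cite{DOSS12,MilanovAncestor} in the form \eqref{TRCohFT}.

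First I would match the Frobenius data read off from the spectral curve with Dubrovin's. By Dubrovin's theorem stated above, the critical values $u_i=x(\cp_i)$ are canonical coordinates of a Frobenius structure whose flat metric is \eqref{metric} and whose flat coordinates are the periods \eqref{flat} of the primary differential $dy$. The transition matrix $\Psi$ between the flat and canonical frames is then determined, together with the $u_i$, by the residue pairing \eqref{metric} and the leading behaviour of the primary differentials at the $\cp_i$; this is exactly the $(u,\Psi)$ data entering the DOSS local curve. In particular the number of disks equals $N=\dim H_{g,\mu}$, in agreement with \eqref{DOSS}.

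The heart of the argument, and the step I expect to be the main obstacle, is to show that the R-matrix governing Dubrovin's Frobenius structure coincides with the R-matrix encoded by the Bergman kernel $B$ of the compact curve. Concretely, one must prove that the Laplace-type asymptotic expansion of $B$, as its two arguments approach the critical points $\cp_i$ and $\cp_j$ and in the local coordinates $\zeta_i$ with $x=u_i+\zeta_i^2/2$, reproduces order by order the entries of Dubrovin's R-matrix; equivalently, the canonical differentials $V^i_k$ of \eqref{Vdiff}, built from $B$ and $dx$ on $\Sigma$, must restrict near each $\cp_j$ to the local differentials that carry the R-matrix in the DOSS normalisation. The natural route is the variational (Rauch) formula of Lemma~\ref{varprimdif}, which expresses the $u_i$-derivatives of $y_\alpha dx$ through residues of $B$: this is the common mechanism that governs both the deformation of Dubrovin's flat connection and the off-diagonal expansion of $B$, and it is what forces the two R-matrices to agree. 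Once this identification is established the two local spectral curves coincide identically.

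Finally I would conclude by invoking the theorem of \cite{DOSS12,MilanovAncestor}: for the local spectral curve of a semisimple Frobenius manifold, the topological recursion output is obtained by integrating the CohFT classes $I_{g,n}$ against powers of the $\psi$-classes over $\overline{\modm}_{g,n}$ and tensoring with the differentials $V^i_k$. Transporting this formula back to the compact curve through locality puts the $\omega_{g,n}$ of $(\Sigma,x,y,B)$ on the left-hand side and Dubrovin's CohFT on the right-hand side, which is precisely \eqref{TRCohFT}.
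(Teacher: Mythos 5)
Your overall skeleton --- use locality of the recursion to reduce to the local correspondence of Theorem~\ref{th:DOSS}, and identify the $R$-matrix of Dubrovin's structure with the Laplace transform of $B$ --- is the same as the paper's, and your instinct that the $R$-matrix identification rests on the Rauch variational formula is correct (this identification is precisely Shramchenko's Theorem~\ref{th:shr}, which the paper cites rather than re-proves). However, there is a genuine gap at the step where you claim that ``once this identification is established the two local spectral curves coincide identically.'' Matching the $R$-matrix only matches the data in \eqref{eq:Identification-MatrixR}, i.e.\ the one-variable Laplace transforms of the differentials $B(\cp_i,\cdot)$. It does not determine the remaining local data that Theorem~\ref{th:DOSS} requires, and those requirements are genuine constraints rather than automatic consequences: the paper notes explicitly that a general spectral curve fails them (e.g.\ $x=z+1/z$, $dy=z^m dz$ on $\bp^1$ lies in the image of \eqref{DOSS} only for $m=-1,0$).

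Concretely, two further compatibilities must be verified before Theorem~\ref{th:DOSS} can be invoked. First, the full two-point expansion of $B$ near pairs of critical points must factorise through the same $R$-matrix, i.e.\ \eqref{shape} must hold: the recursion kernel uses $B$ with \emph{both} arguments near critical points, so all coefficients $B^{i,j}_{k,l}$ enter the recursion, whereas \eqref{eq:Identification-MatrixR} pins down only the $B^{i,j}_{0,k}$. In the paper this is supplied by Eynard's factorisation, Lemma~\ref{eynlem}, which holds because $\Sigma$ is compact and $dx$ is meromorphic. Second, the chosen $dy(p)=\oint_\cc B(p,p')$ must satisfy \eqref{eq:Identification-Dilaton} to all orders in $z$; your matching of the $(u_i,\Psi)$ data only recovers the $z\to 0$ limit \eqref{dyeta}. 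In the paper this is where dominance enters: since $dy$ is a primary differential, its poles are dominated by those of $dx$, so Theorem~\ref{thm:CompatibilityTest} applies through Corollary~\ref{spec2cohft}. Without these two verifications the hypotheses of Theorem~\ref{th:DOSS} are not met, and the ancestor formula \eqref{TRCohFT} cannot yet be quoted.
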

The proof of Theorem~\ref{th:main}---contained in Section~\ref{sec:rauch}---is a simple combination of results from \cite{Dub98,DOSS12,GiventalMain,ShrRie} which are reviewed in this paper.  The main tool in the proof is the map \eqref{DOSS} from \cite{DOSS12} which shows how topological recursion relates to Givental's construction \cite{GiventalMain} of the total ancestor potential associated to each semi-simple point of a Frobenius manifold.   To apply the reverse construction of \eqref{DOSS} one needs a specific relationship between the Bergman kernel $B$ on the spectral curve and the $R$-matrix of the Frobenius manifold which is proven in \cite{ShrRie}.  
\begin{remark}
Theorem~\ref{th:main} also answers the following question. Given a compact spectral curve, what does the topological recursion procedure compute? For a large class of spectral curves---where $B$ and $y$ are determined almost canonically by $\Sigma$ and $x$---the answer is that it produces generating functions for ancestor invariants of a Hurwitz space to which the branched cover underlying the spectral curve belongs. In particular, it completes the picture drawn by Zhou in \cite{ZhoFro} for relating Frobenius manifolds and spectral curves. 
\end{remark}
\begin{remark}
Theorem~\ref{th:main0} concerns only the primary invariants in \eqref{TRCohFT}, corresponding to $d_j=0$, $j=1,...,n$.   One can also construct generalised contours $\cc_{\alpha,k}=p_k(x)\cc_{\alpha}$, for $\cc_{\alpha}\in\cd$ and $p_k(x)=x^k+...$ a monic polynomial of degree $k$ in $x$, so that the ancestor invariants, corresponding to $d_j\geq 0$, appear as periods thus generalising Theorem~\ref{th:main0}:
\beq
\int_{\cc_{\alpha_1,k_1}}...\int_{\cc_{\alpha_n,k_n}}\omega_{g,n}=\int_{\overline{\modm}_{g,n}}I_{g,n}\Big(e^{\alpha_1}\otimes ...\otimes e^{\alpha_n}\Big)\cdot\prod_{j=1}^n\psi_j^{k_j}.
\eeq
\end{remark}

Theorems~\ref{th:main0} and ~\ref{th:main} enable one to generate primary invariants and all ancestor invariants of $\widetilde{H}^s_{g,\mu}$ of all genera.  Previously only genus 0 and genus 1 primary invariants were known.  Theorems~\ref{th:main0} and ~\ref{th:main} also have applications to the topological recursion procedure.  Using the generalised contours in $\cd$ one gets a direct map from $\omega_{g,n}$ to primary invariants via integration over the cycles.



The paper is organised as follows.  In Sections~\ref{sec:fro} and \ref{sec:TR}, we remind the reader of the general theory of Frobenius manifolds and topological recursion, as well as the correspondence between the two, following \cite{DOSS12}.  In Section~\ref{sec:hur}, we describe the construction of Dubrovin of Frobenius manifold structures on covers of Hurwitz spaces and prove Theorems~\ref{th:main0} and \ref{th:main}.  In Section~\ref{sec:compact}, 
we discuss an extension of the results to non-semi-simple points.


\subsection{Acknowledgement}
The authors would like to thank Gaetan Borot for fruitful discussions on the subject of this paper.   N.O. would like to thank the KdV-Institute where part of this work was done.  P.N. would like to thank Ludwig-Maximillians-Universit\'at, Munich and the Max-Planck-Insititut f\"ur Mathematik, Bonn for hosting him while part of this work was carried out.  A.P. and S.S. were supported by the Netherlands Organisation for Scientific Research. 
P.D.-B. was partially supported by RFBR grants 16-31-60044-mol-a-dk and
15-01-05990, and by joint RFBR-India grant 16-51-45029-Ind. A.P. was
partially supported by RFBR grant 16-01-00291. P.D.-B. and A.P. were also partially supported
by RFBR grant 15-31-20832-mol-a-ved and by joint RFBR-Japan grant
15-52-50041-YaF.

\section{Frobenius manifolds} \label{sec:fro}
In this section we give a short introduction to Frobenius manifolds.  An important construction for this paper is Givental's $R$-matrix defined in Section~\ref{sec:Rmatrix}.
\subsection{Frobenius manifold}
\begin{definition}
A Frobenius algebra $(H,\eta,\cdot)$ is a finite-dimensional vector space $H$ equipped with a metric $\eta=\langle\ ,\ \rangle$ and a commutative, associative product $\cdot$ satisfying $\langle u\cdot v,w \rangle=\langle u, v\cdot w \rangle$.  
\end{definition}
\begin{example}  \label{semsim}
$$H\cong\bc\oplus\bc\oplus...\oplus\bc,\quad\langle e_i,e_j \rangle=\delta_{ij}\eta_i,\quad e_i\cdot e_j=\delta_{ij}e_i$$ 
for any $\eta_i\in\bc \setminus \{0\}$, $i=1,...,N$ and where $\{e_i\}$ is the standard basis.  Conversely any {\em semisimple} Frobenius algebra is determined uniquely by $N$ non-zero complex numbers $\{\eta_i\}$ and is isomorphic to this example. 
\end{example}

A Frobenius manifold is defined by the data of a Frobenius algebra on the tangent space at each point of the manifold and such that the metric is flat.  In terms of flat coordinates $\{t^{\alpha}\}$ a Frobenius manifold can be defined locally as follows.  Consider a function $F(t^1,\dots,t^N)$ defined on a ball $B\subset \mathbb{C}^N$ and a constant inner product $\eta^{\alpha\beta}$ such that the triple derivatives of $F$ with one raised index,
\begin{equation}
C_{\alpha\beta}^\gamma:=\frac{\partial^3 F}{\partial t^\alpha \partial t^\beta \partial t^\lambda} \eta^{\lambda\gamma},
\end{equation}
are the structure constants of a commutative associative Frobenius algebra with the scalar produce given by $\eta_{\alpha\beta}$. We can think about this structure as defined on the tangent bundle of $B\subset \mathbb{C}^N$ (and we denote the corresponding multiplication of vector field by $\cdot$), and we require that $\partial_{t^1}$ is the unit of the algebra in each fibre. 

We further consider structures (almost) homogeneous under a vector field $E:=\sum_{\alpha=1}^N ((1-q_\alpha)t^\alpha+r_\alpha) \partial_{t^\alpha}$, where $q_\alpha$ and $r_\alpha$ are constants for $\alpha=1,\dots,N$, satisfying $q_1=0$ and $r_\alpha\not=0$ only in the case $1-q_\alpha=0$. We require that there exists a constant $d$ such that $E.F-(3-d)F$ is a polynomial of order at most $2$ in $t^1,\dots,t^N$. 

The triple $(F,\eta,E)$ that satisfies all conditions above gives us the structure of a (conformal) Frobenius manifold of rank $N$ and conformal dimension $d$ with flat unit. The function $F$ is called the prepotential; the vector field $E$ is called the Euler vector field.  The coordinate-free description of this structure requires a flat metric with associated Levi-Civita connection, unit and Euler vector fields satisfying compatibility conditions---see \cite{Dub94} for details.

In this paper we only consider semi-simple Frobenius manifolds, that is, we require that the algebra structure at each point on an open subset $B^{ss}\subset B$ is semi-simple hence isomorphic to Example~\ref{semsim}. In a neighborhood of a semi-simple point we have a system of canonical coordinates $u_1,\dots,u_N$, defined up to permutations, such that the vector fields $\partial_{u_i}$, $i=1,\dots,N$, are the idempotents of the algebra product, and the Euler vector field has the form $E=\sum_{i=1}^N u_i\partial_{u_i}$.  This gives rise to two important systems of coordinates: flat coordinates, leading to a fixed metric and varying product, and canonical coordinates, leading to a fixed product and varying metric.
With respect to the canonical coordinates, the flat metric on $M$ is diagonal with diagonal terms generated by a potential $H:M\to\bc$ via \eqref{potmet} which satisfies \eqref{diagmet}.

Define the rotation coefficients
\begin{equation}  \label{rotcoeff}
\beta_{ij}=\frac{\partial_{u_j}\eta_i}{\sqrt{\eta_i\eta_j}} .
\end{equation}
Then \eqref{diagmet} and \eqref{potmet} imply that $\beta_{ij}(u)$ satisfy the Darboux-Egoroff system
\begin{align}
\beta_{ij}&=\beta_{ji} , \\
\label{rotcoeffPDE} 
\partial_{u_k}\beta_{ij}&=\beta_{ik}\beta_{jk}.
\end{align}
Flatness of the identity and conformality imply
\begin{align}
\sum_{k}\partial_{u_k}\beta_{ij}&=0,\\
\label{rotcoeffEul} 
\sum_{k}u_k\partial_{u_k}\beta_{ij}&=-\beta_{ij}.
\end{align}
Assemble the rotation coefficients into a symmetric $N\times N$ matrix $\Gamma=\Gamma(u)$---whose diagonal is not a part of the structure---by $\Gamma_{ij}=\beta_{ij}$.  Then equations (\ref{rotcoeffPDE}-\ref{rotcoeffEul} ) are equivalent to the Darboux-Egoroff equation
\begin{equation}
d[\Gamma,U]=[[\Gamma,U],[\Gamma,dU]]
\end{equation}
where $U=\diag(u_1,\dots,u_N)$. 

The rotation coefficients give less information than the metric, i.e. there are different solutions of \eqref{diagmet} and \eqref{potmet} that give rise to the same rotation coefficients.  The system
\begin{align*}
\partial_{u_j}\psi_i&=\beta_{ij}\psi_j,\quad i\neq j\\
\sum_{j=1}^N\partial_{u_j}\psi_i&=0,\quad i=1,...,N
\end{align*}
has an $N$-dimensional space of solutions $\psi=(\psi_1(u),...,\psi_N(u))$ which enables one to retrieve a metric for each solution from the rotation coefficients.  Put $N$ independent solutions of this system into the columns of a matrix $\Psi$, so the system becomes
\beq \label{psipde}
d\Psi = [\Gamma,dU] \Psi.
\eeq
The canonical coordinate vector fields $\partial_{u_i}$ are orthogonal but not orthonormal. We can normalise them to produce a so-called normalised canonical frame in each tangent space, that is, if $\eta_i=\eta(\partial_{u_i},\partial_{u_i})$, then the orthonormal basis is given by $\partial_{v_i}:=\eta_i^{-1/2} \partial_{u_i}$, $i=1,\dots,N$. The matrix $\Psi$ in \eqref{psipde} is the transition matrix from the flat basis to the normalised canonical basis.

\subsection{Cohomological field theory}  \label{sec:cohft}

A {\em cohomological field theory} is a pair $(H,\eta)$ composed of a finite-dimensional complex vector space $H$ equipped with a metric $\eta$ and a sequence of $S_n$-equivariant maps. 
\[ I_{g,n}:H^{\otimes n}\to H^*(\overline{\modm}_{g,n})\]
that satisfy compatibility conditions from inclusion of strata:
$$\psi:\overline{\modm}_{g-1,n+2}\to\overline{\modm}_{g,n},\quad \phi_I:\overline{\modm}_{g_1,|I|+1}\times\overline{\modm}_{g_2,|J|+1}\to\overline{\modm}_{g,n},\quad I\sqcup J=\{1,...,n\}$$
given by
\begin{align}
\phi_I^*I_{g,n}(v_1\otimes...\otimes v_n)&=I_{g_1,|I|+1}\otimes I_{g_2,|J|+1}\big(\bigotimes_{i\in I}v_i\otimes\Delta\otimes\bigotimes_{j\in J}v_j\big)  \label{glue1}
\\
\psi^*I_{g,n}(v_1\otimes...\otimes v_n)&=I_{g-1,n+2}(v_1\otimes...\otimes v_n\otimes\Delta)  \label{glue2}
\end{align}
where $\Delta\in H\otimes H$ is dual to the metric.  In local coordinates it is given by $\Delta=\eta^{\alpha\beta}e_{\alpha}\otimes e_{\beta}$.

The metric $\eta=\langle\cdot,\cdot\rangle$ and the 3-point function $I_{0,3}$ induce a product $\cdot$ on $H$ via
$$ \langle u\cdot v,w\rangle=I_{0,3}(u,v,w)\in H^*(\overline{\modm}_{0,3})\cong\bc.
$$

Correlators, or ancestor invariants, of the CohFT make use of the Chern classes $\psi_j=c_1(L_j)$ of the tautological line bundles $L_j,j=1,...,n$ over $\overline{\modm}_{g,n}$.  The correlators are defined by: 
\beq \label{tau} 
\langle\tau_{k_1}(e_{\nu_1})...\tau_{k_n}(e_{\nu_n})\rangle_g:=\int_{\overline{\modm}_{g,n}}I_{g,n}(e_{\nu_1},...,e_{\nu_n})\cdot\prod_{j=1}^n\psi_j^{k_j}
\eeq
for $k_i\in\bn$, $\{e_{\nu},_{\ \nu=1,...,N}\}\subset H$.  When $k_i=0$, $i=1,...,n$ the ancestor invariants are also known as {\em primary invariants} of the CohFT.

Givental \cite{GiventalMain} introduced a group action on genus 0 potentials of a CohFT, and used it to propose a formula for higher genera.  Faber, Zvonkine and the last author \cite{FSZTau} proved that the higher genera formula satisfies all properties that might be imposed to correlators of CohFT, hence the Givental group acts on partition functions of CohFTs in all genera.  The interpretation of the action on correlators as an action on cohomology classes was constructed by several people independently, namely, by Teleman \cite{Teleman}, Katzarkov-Kontsevich-Pantev (unpublished), and Kazarian (unpublished)---see \cite{ShaBCO}.  There is a good account of this action on cohomology classes by Pandharipande-Pixton-Zvonkine \cite{PPZRel}.  Hence we can associate a CohFT to a semi-simple point of a Frobenius manifold.  Conversely a CohFT gives rise to a Frobenius manifold structure on (a neighborhood inside) $H$ using the constant metric $\eta$ as the flat metric and a varying family of products using $I_{0,n}$ in place of $I_{0,3}$.  See \cite{ManFro} for details.

If the Frobenius manifold has flat identity---meaning that the identity vector field for the product on the tangent bundle is parallel with respect to the Levi-Civita connection of the flat metric $\eta$---then this is realised on the CohFT level by an extra relation involving the forgetful map
$$\pi:\overline{\modm}_{g,n+1}\to\overline{\modm}_{g,n}$$
given by
\beq  \label{flatid}
I_{g,n+1}(v_1\otimes\cdots\otimes v_n\otimes \un)=\pi^*I_{g,n}(v_1\otimes\cdots\otimes v_n),\quad I_{0,3}(v_1\otimes v_2\otimes \un)=\eta(v_1\otimes v_2)
\eeq
where $\un$ is the unit vector for the product.

\subsection{Classification of semi-simple cohomological field theories}
\label{sec:Rmatrix}

The Givental-Teleman theorem \cite{GiventalMain,Teleman} states that a semi-simple CohFT is equivalent to the pair $(H,\eta)$ together with a so-called $R$-matrix.  An $R$-matrix 
$$R(z)  =  \sum_{k=0}^\infty R_k z^k$$
is a formal series whose coefficients are $N \times N$ matrices where $N=\dim H$ is the rank of the Frobenius manifold.  Givental used $R[z]$ to produce a differential operator, a so-called quantisation of $R[z]$, which acts on a known tau-function to produce a generating series for the correlators of the CohFT.

The coefficients $R_k$ are defined using $\Psi$, the transition matrix from flat coordinates to normalised canonical coordinates determined by \eqref{psipde}, via $R_0=I$ and the inductive equation
\beq\label{eqdefR}
d\left( R(z) \Psi \right) = \frac{\left[R(z),dU\right]}{z} \Psi
\eeq
which uniquely determines $R(z)$ up to left multiplication by a diagonal matrix $D(z)$ independent of $u$ with $D(0)=I$. We recall that this  equation is a consequence of the fact that $R(z)$ is the regular part of the expansion of the solution of a linear system associated by Dubrovin  to any semisimple Frobenius manifold around its essential singularity (see for example lecture 3 in \cite{Dub94} for more details).

Using $d\Psi = \left[\Gamma,dU\right] \Psi$ one can write
$$
d\left(R(z) \Psi \right) = d\left[R(z)\right] \Psi + R(z) d \Psi = 
d\left[R(z)\right] \Psi + R(z)  \left[\Gamma,dU\right] \Psi .
$$
Together with equation \eqref{eqdefR} and the invertibility of $\Psi$, this gives
\beq  \label{eqdefR2}
dR(z) = \frac{\left[R(z),dU\right] }{ z} - R(z)  \left[\Gamma,dU\right].
\eeq
This re-expresses the equation for $R(z)$ in terms of the rotation coefficients, which uses less information than the full metric, encoded in $\Psi$.   Since $dU(\un) =I$, an immediate consequence of \eqref{eqdefR2} is
\beq \label{unR}
\un\cdot R(z)=0.
\eeq

If the theory is homogenous, then invariance under the action of the Euler field 
\beq \label{homR}
(z\partial_z+E)\cdot R(z)=0
\eeq
fixes the diagonal ambiguity in $R(z)$.

The first non-trivial term $R_1$ of $R(z)$ is given by the rotation coefficients
\beq \label{R1=Gam}
R_1=\Gamma.
\eeq
This follows from comparing the constant (in $z$) term in \eqref{eqdefR} which is
$$d\Psi=[R_1,dU]\Psi
$$
to equation \eqref{psipde} given by $d\Psi = \left[\Gamma,dU\right] \Psi$.  Since $dU$ is diagonal with distinct diagonal terms, we see that \eqref{R1=Gam} holds for off-diagonal terms, and the ambiguity in the diagonal term for both is unimportant---it can be fixed in $R_1$ by \eqref{eqdefR} together with $E\cdot R_1=-R_1$.


\section{Topological recursion and cohomological field theory} \label{sec:TR} 
In this section, we give a brief overview of topological recursion defined in \cite{EOInv}.
Consider a Riemann surface $\Sigma$ equipped with meromorphic functions $x,y\colon \Sigma\to \mathbb{C}$ such that the zeros of $dx$, given by $\{\cp_1,...,\cp_N\}$ are simple and $dy$ is analytic and non-vanishing on $\{\cp_1,...,\cp_N\}$.  Let $B$ be a Bergman kernel on $\Sigma\times\Sigma$ as in definition \ref{bergman}.

Define a sequence of symmetric multidifferentials $\omega_{g,n}(p_1,\dots,p_n)$ on $\Sigma^{\times n}$ by the following recursion:
\begin{align}
& \omega_{0,1}(p):=y(p) dx(p); \\
& \omega_{0,2}(p_1,p_2):=B(p_1,p_2);\\ \label{eq:TopologicalRecursion-1stTime}
& \omega_{g,m+1}(p_0,p_1,\dots,p_n) :=  \\ \notag
& \sum_{i=1}^N \Res_{p=\cp_i} 
\frac{\int_p^{\sigma_i(p)}\omega_{0,2}(\bullet,p_0)}{2(\omega_{0,1}(\sigma_i(p))-\omega_{0,1}(p))}
\tilde\omega_{g,2|n}(p,\sigma_i(p)|p_1,\dots,p_n),\label{TR}
\end{align}
where $\sigma_i$ is the local involution defined by $x$ near the point $\cp_i$, $i=1,\dots,N$, and $\tilde\omega_{g,2|n}$ is defined by the following formula:
\begin{align}
\tilde\omega_{g,2|n}(p',p''|p_1,\dots,p_n):=& \omega_{g-1,n+2}(p',p'',p_1,\dots,p_n)+
\\ \notag &
\sum_{\substack{g_1+g_2=g\\ I_1\sqcup I_2 = \{1,\dots,n\} \\ 2g_1-1+|I_1|\geq 0 \\ 2g_2-1+|I_2|\geq 0}}
\omega_{g_1,|I_1|+1}(p',p_{I_1})\omega_{g_2,|I_2|+1}(p'',p_{I_2}).
\end{align}
Here we denote by $p_I$ the sequence $p_{i_1},\dots,p_{i_{|I|}}$ for $I=\{i_1,\dots,i_{|I|}\}$.

\begin{remark}
The recursion was defined on so-called {\em local} spectral curves in \cite{EynardInt} as follows.  Consider small neighborhoods $U_i\subset \Sigma$ of the points $\cp_i$. If we look at just the restrictions of $\omega_{g,n}$ to the products of these disks, $U_{i_1}\times \cdots\times U_{i_n}$, we can still proceed by topological recursion, using as an input the restrictions of $\omega_{0,1}$ to $U_i$, $i=1,\dots,N$, and $\omega_{0,2}$ to $U_i\times U_j$, $i,j=1,\dots,N$. Indeed, Equation~\eqref{eq:TopologicalRecursion-1stTime} uses only local expansion data around the points $\cp_i$. Hence, the word local refers to the unique knowledge of these local data. 
\end{remark}

\begin{remark}  \label{y=prim}
In the topological recursion on a compact spectral curve we also allow $y$ to be the (multivalued) primitive of a differential $\omega$ on $\Sigma$.   The ambiguity in $y$ consists of periods and residues of $\omega$ and hence the ambiguity is locally constant.  Since $y$ appears in the recursion formula \eqref{TR} only via $y(\sigma_i(p))-y(p)$ (and there are no poles of $\omega$ at the zeros of $dx$) the locally constant ambiguity disappears and the recursion is well-defined.  We go even further and allow $\omega$ to be a locally defined meromorphic differential (a connection) ambiguous up to $dy+df(x)$ for any rational function $f$.  In this case the ambiguity $y\mapsto y+f(x)$ is no longer constant, but again $y(\sigma_i(p))-y(p)$ is unchanged.
\end{remark}

\subsection{Topological recursion from CohFTs} \label{sec:DOSS}
We recall the relation \eqref{DOSS} of topological recursion on a local spectral curve to the Givental formulae for cohomological field theories obtained in~\cite{DOSS12}. 

\begin{definition}\label{evaluationform}
For a Riemann surface equipped with a meromorphic function $(\Sigma,x)$ we define evaluation of any meromorphic differential $\omega$ at a simple zero $\cp$ of $dx$ by
$$
\omega(\cp):=\Res_{p=\cp}\frac{\omega(p)}{\sqrt{2(x(p)-x(\cp))}}
$$
where we choose a branch of $\sqrt{x(p)-x(\cp)}$ once and for all at each $\cp$ to remove the $\pm1$ ambiguity.
\end{definition}

\begin{theorem} \label{th:DOSS}  \cite{DOSS12}
Given a semi-simple CohFT presented via the $R$-matrix $R(z)  =  \sum_{k=0}^\infty R_k z^k$ and constants $\eta_1,...,\eta_N$ define a local spectral curve $(\Sigma,x,y,B)$, presented as (the Laplace transform of) local series for $dy(p)$ and $B(p,p')$ around each zero $p=\cp_i$, $p'=\cp_j$ of $dx$ (which is locally canonical) 
as follows: 
\begin{align}
\label{eq:Identification-MatrixR}
& \left[R^{-1}(z)\right]^i_j = -\frac{\sqrt{z}}{\sqrt{2\pi}}\int_{\Gamma_j} B(\cp_i,p)\cdot e^{\frac{(u_j-x(p))}{ z}}\\
\label{eq:Identification-Dilaton}
& \sum_{k=1}^N \left[R^{-1}(z)\right]^k_i \cdot\eta_k^{1/2} = \frac{1}{\sqrt{2\pi z}}\int_{\Gamma_i}dy(p)\cdot e^{\frac{(u_i-x(p)) }{ z}}
\end{align}
\beq  \label{shape}
 \frac{1}{{2\pi \sqrt{z_1 z_2} }}\int_{\Gamma_i} \int_{\Gamma_j} {B(p_1,p_2)} e^{\frac{(u_i-x(p_1)) }{ z_1}+\frac{(u_j-x(p_2)) }{ z_2}}
 =
- \frac{\sum_{k=1}^N \left[R^{-1}(z_1)\right]^k_i
 	\left[R^{-1}(z_2)\right]^k_j}{z_1+z_2} 
\eeq
where $\Gamma_i$ is a path containing $u_i=x(\cp_i)$.  Then the multidifferentials $\omega_{g,n}(p_1,...,p_n)$ obtained via topological recursion applied to the local spectral curve $(\Sigma,x,y,B)$ are polynomials in  differentials $V^i_k(p_j)$ defined by
\beq  \label{Vdiff}
V^i_0(p)=B(\cp_i,p),\quad V^i_{k+1}(p)=d\left(\frac{V^i_k(p)}{dx(p)}\right),\ k=0,1,2,...
\eeq
with coefficients given by ancestor invariants of the CohFT:
$$\omega_{g,n}(p_1,...,p_n)=\sum_{\substack{i_1,\dots,i_n \\ d_1,\dots,d_n}} \int_{\overline{\mathcal{M}}_{g,n}} I_{g,n}(e_{i_1},\dots,e_{i_n}) \prod_{j=1}^n
\psi_j^{d_j} \cdot \bigotimes_{j=1}^n V^{i_j}_{d_j}(p_j).
$$
\end{theorem}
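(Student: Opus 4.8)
The plan is to decode both sides of the claimed identity as sums over one and the same set of decorated stable graphs, and then to match the two decorations term by term. On the topological recursion side the decorations come from the local expansions of $\om_{0,1}=y\,dx$, of $\om_{0,2}=B$, and of the recursion kernel in \eqref{TR} around the simple branch points $\cp_i$; on the Givental side they come from the $R$-matrix together with the Witten--Kontsevich point. The three Laplace-transform relations \eqref{eq:Identification-MatrixR}, \eqref{eq:Identification-Dilaton} and \eqref{shape} are precisely the dictionary that makes the two decorations agree, so the whole argument amounts to producing the two graph expansions and applying this dictionary.

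First I would establish the graph expansion of the topological recursion output. Iterating \eqref{TR} and organising the nested residues produces a finite sum over stable graphs $\Gamma$ of genus $g$ with $n$ labelled legs: each internal edge carries a propagator extracted from the expansion of $B$ near a pair of branch points, each external leg carries a differential built from $B(\cp_i,\cdot)$ and its iterated images under $d(\,\cdot\,/dx)$, and each vertex of genus $h$ and valence $m$ carries a purely local weight. The key local input is that near a simple zero $\cp_i$ of $dx$ the curve is, after the change of variable $\zeta_i=\sqrt{x-u_i}$, an Airy curve; running the recursion on this local model returns the intersection numbers $\int_{\overline{\modm}_{h,m}}\prod_j\psi_j^{a_j}$ as the vertex weights. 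This is Eynard's formula \cite{EynardInt,DOSS12}, and the legs are exactly the $V^i_k$ of \eqref{Vdiff}, since one application of $d(\,\cdot\,/dx)$ to $B(\cp_i,\cdot)$ raises the index $k$ by one.

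Next I would recall the parallel graph expansion of the CohFT correlators furnished by the Givental--Teleman classification \cite{GiventalMain,Teleman}: the semi-simple CohFT is the image of the trivial (Witten--Kontsevich) theory under the quantised $R$-action, and unwinding this action gives a sum over the very same stable graphs in which the vertices again carry Witten--Kontsevich numbers, each external leg is dressed by $R^{-1}$ acting on its $\psi$-insertion, the translation (dilaton shift) is governed by $\om_{0,1}$, and each internal edge carries the Givental bivector, a rational kernel in the two edge variables built bilinearly from $R^{-1}$ with a simple pole on the anti-diagonal $z_1+z_2=0$. The matching is then carried out under the Laplace transform: the single transform of $V^i_k$ produces the appropriate power of $z$ times the matrix elements of $R^{-1}$ identified in \eqref{eq:Identification-MatrixR}, reproducing the $R^{-1}$-dressed $\psi^k$ on a leg; the transform of $dy$ in \eqref{eq:Identification-Dilaton} reproduces the dilaton shift; and the double transform of $B$ in \eqref{shape} is exactly the Givental edge bivector. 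Since the vertices are Witten--Kontsevich numbers on both sides, matching the legs, the shift and the edges forces the two sums to coincide, which is the assertion.

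I expect the main obstacle to be the edge step: verifying that the double Laplace transform of the local Bergman kernel really reproduces the rational kernel on the right of \eqref{shape}, with the correct residue at its pole $z_1+z_2=0$. This residue encodes the double pole of $B$ on the diagonal together with its normalisation (zero residue, double residue one), and it forces the symplectic constraint $R(z)\,R^{\top}(-z)=\mathrm{Id}$ on the $R$-matrix---a property that is in turn encoded in the defining equation \eqref{eqdefR}. Thus the analytic heart of the proof is the uniform bookkeeping, over all pairs of branch-point labels $i,j$, that converts the local expansion of $B$ into a single rational propagator compatible with \eqref{eq:Identification-MatrixR}; by comparison the leg and dilaton identifications reduce to routine Laplace-transform computations. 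An alternative route avoids the explicit graph sums altogether and instead verifies that the right-hand side of the claimed formula satisfies the abstract loop equations equivalent to \eqref{TR}, using the gluing relations \eqref{glue1}--\eqref{glue2} of the CohFT to reproduce the recursive splitting; the same edge computation reappears there as the consistency of the quadratic loop equation.
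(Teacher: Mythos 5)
The paper offers no proof of Theorem~\ref{th:DOSS}: it is a review article and imports this statement wholesale from \cite{DOSS12}, so there is no internal argument to compare yours against. Your outline reproduces the strategy of the cited proof---expanding both the topological recursion correlators (via Eynard's intersection-number formula, with legs given by the $V^i_k$ of \eqref{Vdiff}) and the Givental--Teleman reconstruction of the semi-simple CohFT as sums over the same decorated stable graphs with Witten--Kontsevich vertex weights, then matching legs, dilaton shift and edge propagators through the Laplace-transform dictionary \eqref{eq:Identification-MatrixR}--\eqref{shape}, with the symplectic condition $R(z)R^{T}(-z)=\mathrm{Id}$ ensuring consistency of the edge term---so it is essentially the same approach as the source.
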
  
\begin{remark}The spectral curve thus obtained is local, i.e. a collection of open sets $U_i$ each containing a unique zero $\cp_i$ of $dx$.  Thus $\Gamma_i$ is defined only locally, which is fine since we are interested only in the asymptotic expansion for $R$ around $z=0$. Let us also remind the reader that this result is valid for any semisimple Frobenius manifold. We shall see in the next section that, in the case of Hurwitz Frobenius manifolds, one can make these Laplace transform globally well-defined by choosing carefully the integration cycles to consider.
\end{remark}
\begin{remark}This data (the constants $\eta_i$ and the matrix $R(z)^j_i$) determine for us a semi-simple CohFT $\{I_{g,n}\}$ with an $N$-dimensional space of primary fields $V:=\langle e_1,\dots,e_N \rangle$ corresponding to a chosen point $(u_1,...,u_N)$ on a Frobenius manifold---see Section~\ref{sec:Rmatrix}.  In terms of the underlying Frobenius manifold structure, the basis $e_1,\dots,e_N$ corresponds to the normalised canonical basis
\end{remark}
\begin{remark}Note that the limit of \eqref{eq:Identification-Dilaton} at $z=0$ yields:
\beq  \label{dyeta}
\eta_i^{1/2} = dy(\cp_i)
\eeq
which tells us that $dy$ encodes the metric.
\end{remark}
\begin{remark}\label{rem:shape}
Compatibility of \eqref{eq:Identification-MatrixR} and \eqref{shape} is a condition on the bidifferential $B$, not satisfied in general, nevertheless always satisfied if the spectral curve is compact and the differential $dx$ is meromorphic.   Compatibility for compact spectral curves uses a general finite decomposition for $B(p_1,p_2)$ proven by Eynard in Appendix B of \cite{EynardInv} together with \eqref{eq:Identification-MatrixR}. This is recalled in section \ref{sec-factor}.
\end{remark}
\noindent Theorem~\ref{th:DOSS} produces a map 
$$\{\text{semisimple CohFT}\}\longrightarrow\{\text{topological recursion applied to a local spectral curve}\}$$
with image consisting of spectral curves with $B$ and $y$ necessarily satisfying compatibility conditions---compatibility of \eqref{eq:Identification-MatrixR}, \eqref{eq:Identification-Dilaton} and \eqref{shape}.  A general spectral curve will not satisfy such compatibility conditions, i.e. in general one can choose $B$ and $y$ independently.  For example, the rational spectral curve $(\bp^1,x,y,B)$ for $x=z+1/z$, $B=dzdz'/(z-z')^2$, $dy=z^mdz$, $m\in\{-1,0,1,2,...\}$, lies in the image of the map only for $m=-1$ or $0$.

Compatibility of \eqref{eq:Identification-MatrixR} and \eqref{shape} is discussed in Remark~\ref{rem:shape} and compatibility of \eqref{eq:Identification-MatrixR} and~\eqref{eq:Identification-Dilaton} is characterised by the following theorem. 
\begin{theorem}[\cite{DNOPS}]
	\label{thm:CompatibilityTest} Equations~\eqref{eq:Identification-MatrixR} and~\eqref{eq:Identification-Dilaton} are compatible (as equations for the unknown variables $R^{-1}$ and $\eta_i$), $i=1,\dots,N$) if and only if the 1-form
\beq  \label{DOSStest}
\omega(p)=d\left(\frac{dy}{dx}(p)\right)+\sum_{i=1}^N \Res_{p'=\cp_i}\frac{dy}{dx}(p')B(p,p').
\eeq
is invariant under each local involution $\sigma_i$, $i=1,\dots,N$.  
\end{theorem}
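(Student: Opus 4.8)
The plan is to eliminate $R^{-1}(z)$ between the two identities and reduce the whole system to a single Laplace-transform condition on the one-form $\omega$ of \eqref{DOSStest}. Throughout, both \eqref{eq:Identification-MatrixR} and \eqref{eq:Identification-Dilaton} are read as equalities of asymptotic series in $z$ along the steepest-descent contours $\Gamma_i$ through $\cp_i$, where in a local coordinate $\zeta$ with $x=u_i+\zeta^2/2$ one has $\sqrt{2(x-u_i)}=\zeta$, $dx=\zeta\,d\zeta$ and $\sigma_i\colon\zeta\mapsto-\zeta$. First I would substitute $[R^{-1}(z)]^k_i$ from \eqref{eq:Identification-MatrixR} into the left-hand side of \eqref{eq:Identification-Dilaton}, producing
\[
\sum_{k}[R^{-1}(z)]^k_i\,\eta_k^{1/2}=-\frac{\sqrt z}{\sqrt{2\pi}}\int_{\Gamma_i}\Big(\sum_k \eta_k^{1/2}\,B(\cp_k,p)\Big)e^{(u_i-x(p))/z}.
\]
The $z^0$ order of \eqref{eq:Identification-Dilaton} is precisely \eqref{dyeta}, so compatibility already forces $\eta_k^{1/2}=dy(\cp_k)$ in the sense of Definition~\ref{evaluationform}; I would insert these values and test whether the remaining orders hold automatically.

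Two elementary identities then collapse the equation onto $\omega$. The first is the residue identity $\sum_k dy(\cp_k)\,B(\cp_k,p)=\sum_k\Res_{p'=\cp_k}\tfrac{dy}{dx}(p')\,B(p,p')$, which I would check locally: in the coordinate above both sides extract the product $a_0\,b_0(p)$ of the leading $\zeta$-coefficients $a_0=dy(\cp_k)$ of $dy$ and $b_0(p)$ of $B(\cdot,p)$ at $\cp_k$. This identifies $\sum_k\eta_k^{1/2}B(\cp_k,p)$ with the projection term of $\omega$. The second is an integration by parts: from $d(e^{(u_i-x)/z})=-\tfrac1z\,dx\,e^{(u_i-x)/z}$ and the exponential suppression of the endpoints of $\Gamma_i$, one gets $\int_{\Gamma_i}dy\,e^{(u_i-x)/z}=z\int_{\Gamma_i}d\!\big(\tfrac{dy}{dx}\big)e^{(u_i-x)/z}$, which absorbs the stray factor of $z$. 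Combining the two, compatibility of \eqref{eq:Identification-MatrixR} and \eqref{eq:Identification-Dilaton} becomes the family of conditions $\int_{\Gamma_i}\omega(p)\,e^{(u_i-x(p))/z}=0$ for $i=1,\dots,N$.

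I would then observe that $\omega$ is holomorphic at each $\cp_i$: the double pole $-a_0\,d\zeta/\zeta^2$ of $d(dy/dx)$ is cancelled exactly by the $k=i$ term $dy(\cp_i)B(\cp_i,p)=a_0\,d\zeta/\zeta^2+\cdots$ (here $a_0=dy(\cp_i)$), while all other contributions are regular near $\cp_i$; this regularity is what makes the involution condition meaningful. Writing $\omega=g(\zeta)\,d\zeta$ with $g$ holomorphic, the Gaussian moments give $\tfrac{1}{\sqrt{2\pi z}}\int_{\Gamma_i}g(\zeta)e^{-\zeta^2/2z}\,d\zeta=\sum_m (2m-1)!!\,g_{2m}\,z^m$, so the transform vanishes to all orders in $z$ if and only if the even part of $g$ vanishes, i.e. $g$ is odd, i.e. $\sigma_i^*\omega=\omega$. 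Running this over all $i$ gives the stated equivalence.

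The main obstacle will be keeping the reduction an honest equivalence rather than a one-way implication: invariance of $\omega$ under $\sigma_i$ matches the vanishing of the Laplace transform only when the \emph{entire} asymptotic series in $z$ is retained, so I must justify that \eqref{eq:Identification-MatrixR}--\eqref{eq:Identification-Dilaton} genuinely encode all orders and that the integration-by-parts boundary terms are negligible to every order. The residue identity and the $z$-power bookkeeping are routine once the local coordinate is fixed; the real content is that these manipulations recombine the a priori independent data $B$ and $dy$ into exactly the form $\omega$ of \eqref{DOSStest}, after which the parity principle under $\sigma_i$ finishes the argument.
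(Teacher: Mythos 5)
The paper itself contains no proof of this statement---it is imported verbatim from \cite{DNOPS}---so there is no internal argument to compare against; judged on its own merits, your proof is correct and is essentially the argument underlying the cited result, built from the same ingredients the paper deploys elsewhere. Specifically: eliminating $R^{-1}$ via \eqref{eq:Identification-MatrixR}, forcing $\eta_k^{1/2}=dy(\cp_k)$ from the $z^0$ order (this is exactly \eqref{dyeta}), the residue identity $\sum_k dy(\cp_k)B(\cp_k,p)=\sum_k\Res_{p'=\cp_k}\frac{dy}{dx}(p')B(p,p')$, and the integration by parts $\int_{\Gamma_i}dy\,e^{(u_i-x)/z}=z\int_{\Gamma_i}d\bigl(\tfrac{dy}{dx}\bigr)e^{(u_i-x)/z}$ (the same identity used in the proof of Lemma~\ref{eynlem}) reduce compatibility to the vanishing of $\int_{\Gamma_i}\omega\,e^{(u_i-x(p))/z}$ to all orders in $z$, after which the Gaussian parity computation correctly converts this into $\sigma_i^*\omega=\omega$ near each $\cp_i$. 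One point you should make explicit to close the argument: both $d\bigl(\tfrac{dy}{dx}\bigr)$ and $B(\cp_i,p)$ have a double pole at $\cp_i$, which lies \emph{on} the contour $\Gamma_i$, so the integration by parts and the term-by-term splitting of the integrand are only legitimate after deforming $\Gamma_i$ off $\cp_i$; this is well-defined precisely because those poles have zero residue (the same observation the paper makes when defining $\hat{R}$ in Section~\ref{sec:compact}), and it is harmless in the combination $\omega$, which, as you correctly note, is regular at each $\cp_i$. With that caveat spelled out, your equivalence is an honest one in the ring of asymptotic series, which is the sense in which \eqref{eq:Identification-MatrixR} and \eqref{eq:Identification-Dilaton} are stated.
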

The characterisation in Remark~\ref{rem:shape} and Theorem~\ref{thm:CompatibilityTest} allows a {\em converse} construction of semisimple CohFTs from compact spectral curves.  The following is a sufficient condition for compatibility of \eqref{eq:Identification-MatrixR} and~\eqref{eq:Identification-Dilaton}.
\begin{definition}  \label{def:dominant}
We say that a compact spectral curve $(\Sigma,x,y,B)$ is {\em dominant} if $x$ and $dy$ are meromorphic and the poles of $dx$ dominate the poles of $dy$.
\end{definition}
\begin{corollary}  \label{spec2cohft}
A dominant compact spectral curve $(\Sigma,x,y,B)$ lies in the image of \eqref{DOSS} and hence gives rise to a semisimple CohFT.
\end{corollary}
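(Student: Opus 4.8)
The plan is to reduce the statement to the two compatibility criteria already recorded, namely Remark~\ref{rem:shape} and Theorem~\ref{thm:CompatibilityTest}, and then to show that dominance forces the test $1$-form \eqref{DOSStest} to vanish identically. Lying in the image of \eqref{DOSS} requires the simultaneous compatibility of the three identifications \eqref{eq:Identification-MatrixR}, \eqref{eq:Identification-Dilaton} and \eqref{shape}. Since a dominant compact spectral curve is compact with $x$, and hence $dx$, meromorphic, Remark~\ref{rem:shape} immediately supplies the compatibility of \eqref{eq:Identification-MatrixR} with \eqref{shape}. It therefore remains only to produce the compatibility of \eqref{eq:Identification-MatrixR} with \eqref{eq:Identification-Dilaton}, which by Theorem~\ref{thm:CompatibilityTest} is equivalent to the $\sigma_i$-invariance of the $1$-form $\omega(p)$ in \eqref{DOSStest} for every $i=1,\dots,N$.

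First I would pin down the pole structure of the meromorphic function $f:=\frac{dy}{dx}$ on $\Sigma$. Its poles can occur only at zeros of $dx$ or at uncancelled poles of $dy$. The zeros of $dx$ are exactly the simple critical points $\cp_1,\dots,\cp_N$ (a pole of $x$ of order $m$ makes $dx$ have a pole of order $m+1$, never a zero), and there $dy$ is non-vanishing, so $f$ acquires a simple pole at each $\cp_i$. At every pole of $dy$ the dominance hypothesis of Definition~\ref{def:dominant} guarantees that $dx$ has a pole of at least equal order, so $f$ stays holomorphic there; at a pole of $x$ where $dy$ is regular, $f$ even vanishes. Hence on the compact surface $\Sigma$ the only poles of $f$ are the points $\cp_i$.

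The crux is then a single application of the residue theorem. For fixed generic $p$, consider the meromorphic $1$-form $f(p')\,B(p,p')$ in the variable $p'$. Because $B$ has its sole singularity on the diagonal and $f$ has poles only at the $\cp_i$, this form is singular in $p'$ only at $p'=p$ and at $p'=\cp_i$, $i=1,\dots,N$. The sum of all its residues on $\Sigma$ vanishes, which by the Cauchy property \eqref{cauchy}, $\Res_{p'=p}f(p')B(p,p')=df(p)$, reads
\[
d\!\left(\frac{dy}{dx}\right)(p)+\sum_{i=1}^N\Res_{p'=\cp_i}\frac{dy}{dx}(p')B(p,p')=0 .
\]
The left-hand side is precisely $\omega(p)$ of \eqref{DOSStest}, so $\omega\equiv 0$, which is trivially $\sigma_i$-invariant for every $i$. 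Theorem~\ref{thm:CompatibilityTest} then yields the compatibility of \eqref{eq:Identification-MatrixR} and \eqref{eq:Identification-Dilaton}; combined with Remark~\ref{rem:shape} all three identifications are compatible, so the curve lies in the image of \eqref{DOSS} and determines a semisimple CohFT.

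I expect the only delicate point to be the pole bookkeeping of $f=\frac{dy}{dx}$: one must verify that dominance, phrased as a comparison of pole orders of $dx$ and $dy$, genuinely eliminates every pole of $f$ away from the $\cp_i$, including at the ramification points lying over $\infty$, so that no stray residue survives in the global sum. Once this is secured the residue identity above is automatic, the vanishing of \eqref{DOSStest} is immediate, and the corollary follows with no further computation.
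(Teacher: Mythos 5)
Your proposal is correct and follows essentially the same route as the paper's own proof: dominance confines the poles of $dy/dx$ to the critical points $\cp_i$, the residue theorem combined with the Cauchy property \eqref{cauchy} forces the test form \eqref{DOSStest} to vanish identically (hence trivial $\sigma_i$-invariance via Theorem~\ref{thm:CompatibilityTest}), and compactness with meromorphic $dx$ handles \eqref{shape} via Remark~\ref{rem:shape}. Your explicit pole bookkeeping at the $\cp_i$ and over $x^{-1}(\infty)$ is merely a more detailed spelling-out of the paper's one-line assertion that $dy/dx$ has poles only at the zeros of $dx$.
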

\begin{proof}
Any Bergman kernel satisfies the Cauchy property \eqref{cauchy}.  If the poles of $dx$ dominate the poles of $dy$ then $dy/dx$ has poles only at the zeros $\cp_i$ of $dx$.  Then $\omega(p)\equiv 0$ since
$$\sum_{i=1}^N \Res_{p'=\cp_i}\frac{dy}{dx}(p')B(p,p')=-\Res_{p'=p}\frac{dy}{dx}(p')B(p,p')=-d\left(\frac{dy}{dx}(p)\right)
$$
and hence it is invariant under each local involution $\sigma_i$.  Since the Riemann surface $\Sigma$ is compact it automatically satisfies \eqref{shape} hence the claim is proven.
\end{proof}
\begin{remark}
In fact Corollary~\ref{spec2cohft} allows a weaker hypothesis which we will need.  We can instead allow $dy$ to be a locally defined meromorphic differential, essentially a connection, which is ambiguous up to $dy\mapsto dy+\lambda dx$.  The conclusion of Corollary~\ref{spec2cohft} still holds since $d\left(\frac{dy}{dx}\right)$ is globally defined.
\end{remark}

\section{Hurwitz Frobenius manifolds}   \label{sec:hur}

In this section we first remind the reader of Dubrovin's construction of a Frobenius manifold on a cover of Hurwitz space and then prove a number of deformation lemmas, which will be useful in the following sections.

\subsection{Dubrovin's construction}   \label{sec:dubcon}

 As defined in the introduction, denote by $\widetilde{H}_{g,\mu}$ the moduli space of tuples $(\Sigma,x,\{ {\cal A}_i,{\cal B}_i\}_{i=1,...,g})$ consisting of covers $x:\Sigma\to\mathbb{P}^1$ of genus $g$ with fixed ramification profile above infinity $\mu=(\mu_1,\dots,\mu_n)$ together with a choice of a symplectic basis of cycles $({\cal A}_i,{\cal B}_i)_{i=1, \dots,g}$ and marked branches of $x$ at each point above $\infty$.  

Given such a generic cover $x$, we denote its simple branch points 
$$
\forall i=1,\dots, N \, , \; u_i = x(\cp_i)  \qquad \hbox{where} \qquad \left. d x \right|_{\cp_i} = 0.
$$
By the Riemann-Hurwitz formula:
$$
N = 2g -2+n+|\mu| 
$$
and since an element of the Hurwitz space is defined up to a finite information by its critical values, this gives the dimension 
$$
\dim_{\mathbb{C}} \widetilde{H}_{g,\mu}  = 2g -2+n+|\mu| .
$$
In the introduction we claimed that 
\beq \label{flat=can}
\#\cd=N=\#\{p\mid dx(p)=0\}
\eeq
i.e. the number of generalised contours, defined by $\cd$ in \eqref{gencon}, coincides with $\dim_{\mathbb{C}} \widetilde{H}_{g,\mu}$.  This follows from the fact that $dx$ is a meromorphic differential so its divisior $(dx)=Z-P$ has degree $2g-2$, where $Z$ and $P$ are the zeros and poles of $dx$.  Hence 
$$\dim_{\mathbb{C}} \widetilde{H}_{g,\mu}=|Z|=2g-2+|P|=\dim H_1(\Sigma,x^{-1}(\infty))-1+\sum_{i=1}^d\mu_i=\#\cd.
$$
The last equality is clear since the elements of $\cd$ consist of firstly $\{x\ca_1,...,x\ca_g,\cb_1,...,\cb_g,\gamma_i, i=2,...,d\}$ which has cardinality equal to $\dim H_1(\Sigma,x^{-1}(\infty))=2g-1+d$, together with $-1+\sum_i\mu_i=|P|-d-1$ extra elements $x^{k/\mu_i}\cc_{\infty_i},k=1,...,\mu_i, i=1,...,d$ remove $x\cc_{\infty_1}$.


We use the critical values $u_i$ as local coordinates in an open dense domain of $\widetilde{H}_{g,\mu}^s\subset\widetilde{H}_{g,\mu}$ where $u_i \neq u_j$ for $i \neq j$. The vector fields $\partial_{u_i}$ give a basis of $T\widetilde{H}_{g,\mu}$ and define a multiplication $\cdot$ given by:
\beq\label{defmult}
 \partial_{u_i} \cdot \partial_{u_j} = \delta_{ij} \partial_{u_i}.
 \eeq
 We denote the unity and the Euler vector fields:
 \beq \label{defunity}
 e  = \sum_{i=1}^N \partial_{u_i},\quad
 E = \sum_{i=1}^N u_i \partial_{u_i}.
 \eeq 
 Let us now define one-forms on $\widetilde{H}_{g,\mu}$. For any quadratic differential $Q$ on $\Sigma$, define the one-form
 $$
 \Omega_Q = \sum_{i=1}^N du_i \Res_{p = \cp_i} \frac{Q(p) }{ dx}.
$$

Dubrovin defines a set of differentials $\phi$ on $\Sigma$, defined in \eqref{primdif} and described in more detail below, which have poles dominated by the poles of $dx$.  They are known as {\em primary differentials} and used to produce a quadratic differential $Q=\phi^2$.

\bt[Dubrovin \cite{Dub94}]  \label{dubhur}
For any primary differential $\phi$, $\widetilde{H}_{g,\mu}^s\bigcap \{u \, | \, \phi(\cp_i) \neq 0\}$ is equipped with a structure of a Frobenius manifold with
 multiplication \eqref{defmult}, unity and Euler vector fields \eqref{defunity} and metric
\beq  \label{eta}
\eta:=\sum_{i=1}^N du_i^2\cdot \Res_{p= \cp_i} \frac{\phi^2(p) }{ dx(p)} =\sum_{i=1}^N du_i^2\cdot\phi(\cp_i)^2
\eeq
where we used the notation of definition \ref{evaluationform} for the evaluation of a one-form at a point. In addition, the corresponding flat coordinates $\left(t_{\alpha}\right)_{\alpha = 1,\dots , N}$ can be explicitly written
in terms of periods of $\phi$ via
$$t_{\alpha}=\int_{\cc_\alpha}\phi$$
for any $\cc_\alpha \in\cd$.

\et

This means that the data of such a Frobenius manifold structure on $\widetilde{H}_{g,\mu}$ is given by the choice of a primary differential $\phi$.  The definition of a primary differential uses the Torelli marking of $\Sigma$ as follows. Fix a point in $\widetilde{H}_{g,\mu}$, i.e. a pair $(\Sigma,x)$ (a point in a Hurwitz space) together with a basis $\left({\cal A}_i,{\cal B}_i\right)_{i=1,\dots,g}$ (a sheet of $\widetilde{H}_{g,\mu}$ seen as a cover). 
Recall from the introduction that there is a unique Bergman Kernel $B(p,p')$ which is a bidifferential of the second kind normalised to have zero periods over the $\cal A$-cycles in the Torelli marking $\{ {\cal A}_i,{\cal B}_i\}$.  For any generalised contour $\cc_{\alpha}\in\cd$ we define a primary differential by 
$$
\phi_{\alpha}(p)=dy_{\alpha}(p)=\oint_{\cc^*_{\alpha}}B(p,p')
$$
which is locally holomorphic on $\Sigma\setminus x^{-1}(\infty)$.  Here, the dual $\cc^*_{\alpha}=\eta_{\alpha\beta}\cc_{\beta}$ with respect to the metric $\eta$.

Following Dubrovin, let us classify these cycles in 5 types:
\begin{itemize}

\item Type (1): for $i=1,\dots,d$ and $k = 1,\dots,\mu_i-1$ : \label{dubclass}
$$
\int_{p \in { \cal C}_{t_{i,k}}} f(p) = \frac{1 }{ \mu_i-1} \Res_{p \to \infty_i} x(p)^{\frac{k }{ \mu_i}} f(p);
$$

\item Type (2) : for $i=2,\dots, d$:
$$
\int_{p \in { \cal C}_{v_i}} f(p) =  \Res_{p \to \infty_i} x(p) \,  f(p);
$$

\item Type (3):  for $i=2,\dots, d$:
$$
\int_{p \in { \cal C}_{w_i}} f(p) = \hbox{v.p.} \int_{\infty_1}^{\infty_i} f(p) ;
$$

\item Type (4): for $i=1,\dots,g$:
$$
\int_{p \in { \cal C}_{r_i}} f(p) = - \oint_{{\cal A}_i}x(p) \,  f(p) ;
$$

\item Type (5): for $i=1,\dots,g$:
$$
\int_{p \in { \cal C}_{s_i}} f(p) = \frac{1 }{ 2 i \pi} \oint_{{\cal B}_i}   f(p) .
$$

\end{itemize}

We see that the two important systems of coordinates---flat coordinates and canonical coordinates---correspond to cycles in $\cd$, respectively zeros of $dx$.  These sets have the same cardinality by \eqref{flat=can}.

\subsection{Vector fields, cycles and meromorphic differentials.}  \label{vecycdif}

Let us now introduce a correspondence between vector fields and meromorphic forms using the Bergman kernel $B$ which allows us to express all the quantities defining the Hurwitz Frobenius manifold in terms of meromorphic forms.  For flat coordinates
$$
\partial_{t_{\alpha}} \mapsto\phi_{\alpha}(p) = \int_{p' \in \cc^*_{\alpha}} B(p,p').
$$
By linearity, for any vector field $v$, we can define a cycle $\cc_v$ by
\beq  \label{vec2cyc}
\cc_v = \sum_{\alpha} \left<v,\partial_{t_{\alpha}}\right>_{\eta} \cc_{\alpha}
\eeq
a meromorphic differential
$$
\phi_v(p) = \int_{p' \in \cc_v} B(p,p')
$$
and the metric $\eta$ by
\beq  \label{etadiff}
\left<v_1,v_2\right>_{\phi} = \sum_{i} \Res_{p= \cp_i} \frac{\phi_{v_1} \phi_{v_2} }{ dx(p)} .
\eeq
Note that \eqref{vec2cyc} and \eqref{etadiff} are proven by verifying them on a basis.  We choose the flat basis, to prove \eqref{vec2cyc}.  Substitute $v=\partial_{t_{\alpha}}$ into \eqref{vec2cyc} to get $\cc_{\partial_{t_{\alpha}}} = \sum_{\beta} \left<\partial_{t_{\alpha}},\partial_{t_{\beta}}\right>_{\eta} \cc_{\beta}=\sum_{\beta} \eta_{\alpha\beta} \cc_{\beta}=\cc^*_{\alpha}$ as required.  We choose the canonical basis to prove \eqref{etadiff} as follows.


Apply \eqref{vec2cyc} to the canonical vector fields to get
$$ \cc_{\partial_{u_i}}=\sum_{\alpha}\left<\partial_{u_i},\partial_{t_{\alpha}}\right>_{\eta} \cc_{\alpha}=\sum_{\alpha}\phi(\cp_i)\Psi^i_{\alpha}\cc_{\alpha}
$$
and hence
$$ \phi_{\partial_{u_i}}(p)=\int_{ \cc_{\partial_{u_i}}}B(p,p')=\sum_{\alpha}\phi(\cp_i)\Psi^i_{\alpha}\int_{\cc_{\alpha}}B(p,p')=\sum_{\alpha,\beta}\phi(\cp_i)\Psi^i_{\alpha}\eta^{\alpha\beta}\int_{\cc^*_{\beta}}B(p,p')=\sum_{\alpha,\beta}\phi(\cp_i)\Psi^i_{\alpha}\eta^{\alpha\beta}\phi_{\beta}(p)
$$
We will study $\phi_{\partial_{u_i}}$ via evaluation at $\cp_j$.
$$ \phi_{\partial_{u_i}}(\cp_j)=\sum_{\alpha,\beta}\phi(\cp_i)\Psi^i_{\alpha}\eta^{\alpha\beta}\phi_{\beta}(\cp_j)=\sum_{\alpha,\beta}\phi(\cp_i)\Psi^i_{\alpha}\eta^{\alpha\beta}\Psi^j_{\beta}=\delta_{ij}\phi(\cp_i)$$
which uses the relation $\phi_{\beta}(\cp_j)=\Psi^j_{\beta}$ proven in Proposition~\ref{psisig}.
Since $\phi_{\partial_{u_i}}(p)$ vanishes at $\cp_j$ for $j\neq i$, \eqref{etadiff} becomes rather simple:
$$
\left<\partial_{u_i},\partial_{u_j}\right>_{\phi} = \sum_k \Res_{p= \cp_k} \frac{\phi_{\partial_{u_i}} \phi_{\partial_{u_j}} }{ dx(p)} =\delta_{i,j} \Res_{p= \cp_i} \frac{\phi^2(p)}{ dx(p)} 
$$
in agreement with \eqref{eta} and hence proving \eqref{etadiff} for all vector fields.

The product in terms of the canonical basis gives us a formula in terms of the matrix $\Psi$ of change of basis from flat to canonical which takes the form of the Verlinde formula, or Krichever formula depending on the context. (The Verlinde formula is actually for the degree 0 part of the theory.) This can be written for example following \cite{Dub94} equation (5.61) 
\beq  \label{threept}
C_{\alpha\beta\gamma} = \sum_i \Res_{p= \cp_i} \frac{\phi_{\alpha}(p) \phi_{\beta}(p) \phi_{\gamma}(p) }{ dx(p) \phi(p)}.
\eeq
This depends on the choice of Frobenius structure through $\phi$ which appears in the denominator and a point in the Frobenius manifold through the dependence on $x$.

Let us finally identify the identity and the Euler field. The consistency condition for the identity vector field $\un=\partial_{t_{\alpha_0}}$
$$
\left<\partial_{t_\alpha},\partial_{t_\beta}\right>_{\phi} = C_{\alpha\beta\alpha_0 }
$$
imposes 
$$
\phi_{\un}=\phi_{\alpha_0 } =  \phi
$$
and the Euler vector field
$$
\phi_E = -E\cdot ydx|_{x\text{ fixed}}=E\cdot x dy|_{y\text{ fixed}}=x dy=x\phi
$$
uses variations of the structures which are described below.

\subsection{Rauch variational formula} \label{sec:rauch}
An important tool used in this paper is  Rauch variational formula expressing the variation of the Bergman kernel with respect to the position of the critical values.
\begin{equation}   \label{Rauch}
\frac{\partial}{\partial u_i}B(p_1,p_2)=\Res_{p=\cp_i}\frac{B(p,p_1)B(p,p_2)}{dx(p)}.
\end{equation}
Rauch originally derived the dependence of the Riemann matrix of periods of a Hurwitz cover on the critical values of the covering map in \cite{Rauch}. It later led to the expression of the variation of the Bergman kernel  in \cite{KKoTau}.

In the present context, the meaning of the variation is as follows.  Over the Frobenius manifold $M=\widetilde{H}_{g,\mu}$ we have a universal curve $\pi:\tilde{C}\to M$ and a function $x:\tilde{C}\to M\times\overline{\bc}$ satisfying:
\begin{itemize}
\item[(i)] Each fibre $C=C_u=\pi^{-1}(u)$ is a Riemann surface.
\item[(ii)] $x$ is meromorphic on each fibre $C$.
\item[(iii)] The critical values $\{u_1,...,u_n\}$ of $x$ on each fibre above a semi-simple point are canonical coordinates for $M$.
\end{itemize}
For any vector field $\partial\in\Gamma(TM)$, we choose a lift $\tilde{\partial}\in\Gamma(TC)$ so that $\tilde{\partial} x=0$.  We abuse terminology and write $\tilde{\partial}=\partial$.  Hence we make sense of variations of a function $f(p_1,p_2)$ on $C\times C$ by identifying $p_i\in C_u$ with $p_i'\in C_{u'}$ when $x(p_1)=x(p_1')$.

Rauch's variational formula for the Bergman kernel leads to variational formulae for other quantities, in particular primary differentials.
\beq \label{vardy}
\partial_{u_i}dy(p)=\partial_{u_i}\int_{\cc_{\alpha}}B(p,p')=\int_{\cc_{\alpha}}\partial_{u_i}B(p,p')=\int_{\cc_{\alpha}}B(p,\cp_i)B(p',\cp_i)=dy(\cp_i)B(p,\cp_i) .
\eeq
We apply this to give a short proof of flatness of the metric \eqref{eta} and refer to \cite{Dub94} for the full proof of Theorem~\ref{dubhur} which gives a different proof of flatness. The tangent space to $\widetilde{H}_{g,\mu}$ is spanned by primary differentials constructed from contours in $\cd$.  Hence the following lemma proves flatness of the metric.
\begin{lemma}
When $\cc,\cc'\in\cd$ then $\langle\phi_{\cc},\phi_{\cc'}\rangle_{\phi}$ is constant in $\{u_1,...,u_N\}$.  
\end{lemma}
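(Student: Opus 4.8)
The plan is to show that $\langle\phi_\cc,\phi_{\cc'}\rangle_\phi$ has vanishing derivative along each canonical vector field $\partial_{u_i}$. Since the tangent space at any point of $\widetilde{H}^s_{g,\mu}$ is spanned by the primary differentials $\{\phi_\cc\}_{\cc\in\cd}$ attached to the flat basis, this is exactly the statement that the components of the metric \eqref{etadiff} are constant in flat coordinates, i.e.\ flatness. I start from the residue expression
\beq
\langle\phi_\cc,\phi_{\cc'}\rangle_\phi=\sum_{k=1}^N\Res_{p=\cp_k}\frac{\phi_\cc(p)\,\phi_{\cc'}(p)}{dx(p)},
\eeq
and differentiate it with respect to $u_i$ using the variational conventions of Section~\ref{sec:rauch}, in which $x$ (hence $dx$) is held fixed along the chosen lift $\tilde\partial_{u_i}$.

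First I would record that the variational formula \eqref{vardy} applies to \emph{every} contour $\cc\in\cd$, not merely to the distinguished one defining $\phi$: because the weight $x$ entering the $x$-dependent contours of Definition~\ref{def:gencyc} is held fixed, differentiation commutes with integration along $\cc$, and Rauch's formula \eqref{Rauch} gives $\partial_{u_i}\phi_\cc(p)=\phi_\cc(\cp_i)\,B(p,\cp_i)$, where $\phi_\cc(\cp_i)$ is the evaluation of Definition~\ref{evaluationform}. Since $dx$ is fixed, $\partial_{u_i}$ passes through the residues, and the Leibniz rule together with this formula yields
\beq
\partial_{u_i}\langle\phi_\cc,\phi_{\cc'}\rangle_\phi=\phi_\cc(\cp_i)\,A_{\cc'}+\phi_{\cc'}(\cp_i)\,A_\cc,\qquad A_\chi:=\sum_{k=1}^N\Res_{p=\cp_k}\frac{B(p,\cp_i)\,\phi_\chi(p)}{dx(p)},
\eeq
the constants $\phi_\cc(\cp_i)$ and $\phi_{\cc'}(\cp_i)$ being independent of the residue variable $p$ and hence pulled out.

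It then suffices to prove $A_\chi=0$ for $\chi\in\{\cc,\cc'\}$. The one-form $B(p,\cp_i)\,\phi_\chi(p)/dx(p)$ is globally meromorphic in $p$ on the compact surface $\Sigma$, and its only poles are at the zeros $\cp_1,\dots,\cp_N$ of $dx$. Indeed $B(p,\cp_i)$ is holomorphic in $p$ away from $p=\cp_i$, while at the poles $x^{-1}(\infty)$ the quotient $\phi_\chi/dx$ is holomorphic because the primary differential $\phi_\chi$ has poles dominated by those of $dx$; hence no residue is contributed at $x^{-1}(\infty)$. By the residue theorem on $\Sigma$ the sum of all residues of this one-form vanishes, so $A_\chi=\sum_k\Res_{p=\cp_k}=0$. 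Therefore $\partial_{u_i}\langle\phi_\cc,\phi_{\cc'}\rangle_\phi=0$ for every $i$, which is the claim.

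The step I expect to require the most care is the passage of $\partial_{u_i}$ through the residues at the moving critical points $\cp_k$: under the identification of fibres by equal $x$-value one must verify that the only contribution is the variation of the integrand $\phi_\cc\phi_{\cc'}$ and that no boundary term arises from the displacement of $\cp_k$, in particular for $k=i$ where the local structure of $x$ near $\cp_i$ itself changes. As an independent check, the same vanishing follows from the evaluation form $\langle\phi_\cc,\phi_{\cc'}\rangle_\phi=\sum_k\phi_\cc(\cp_k)\phi_{\cc'}(\cp_k)=(\Psi^{\top}\Psi)_{\cc\cc'}$ together with the relation $\phi_\beta(\cp_j)=\Psi^j_\beta$ from Proposition~\ref{psisig}, the consequence $B(\cp_i,\cp_j)=\beta_{ij}$ it produces, and the linear system \eqref{psipde}: a direct computation shows that the off-diagonal contributions governed by $\beta_{ij}$ cancel in pairs, giving $\partial_{u_i}(\Psi^{\top}\Psi)=0$ once more.
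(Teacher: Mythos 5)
Your proof is correct and follows essentially the same route as the paper's: Rauch's variational formula $\partial_{u_i}\phi_{\cc}(p)=\phi_{\cc}(\cp_i)B(p,\cp_i)$ (justified by the contours being geometric with $x$-weights fixed), differentiation through the residues, and then the residue theorem on the compact surface $\Sigma$, using that the poles of $\phi_{\cc}$ at $x^{-1}(\infty)$ are dominated by those of $dx$ so that the integrand has poles only at the $\cp_k$. The only differences are cosmetic---you split the sum into the two pieces $A_{\cc}$, $A_{\cc'}$ and vanish each separately, and you append a consistency check via $\Psi^{\top}\Psi$ (which, note, would be circular as a standalone argument since \eqref{psipde} and Proposition~\ref{psisig} presuppose the flat structure)---but the core argument is the paper's.
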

\begin{proof} From the Rauch's variational formula \eqref{Rauch}, we have
$\partial_{u_i}\phi_{\cc}(p)=\phi_{\cc}(\cp_i)B(p,\cp_i)$.  This uses the fact that the contour $\cc$ depends only on a geometric contour independent of the choice of $u_i$, and possibly a function of $x$ which is constant, i.e. $\partial_{u_i}x=0$ by assumption.  Hence
$$\partial_{u_j}\langle\phi_{\cc},\phi_{\cc'}\rangle=\sum_i\Res_{p=\cp_i}\frac{\partial_{u_j}(\phi_{\cc}(p)\phi_{\cc'}(p))}{dx(p)}=\sum_i\Res_{p=\cp_i}\frac{B(p,\cp_j)(\phi_{\cc}(\cp_j)\phi_{\cc'}(p)+\phi_{\cc}(p)\phi_{\cc'}(\cp_j))}{dx(p)}=0.$$
Note that the integrand potentially has poles at $\cp_j$ and $\infty_k$ but since each $\phi_{\cc}(p)$ is dominated by $dx(p)$ at each $p=\infty_k$ the poles at $\infty_k$ are removable.  Hence the last equality uses the fact that the integrand has poles only at $\cp_i$, $i=1,...,N$ so that the sum of its residues at $\cp_i$ is 0.  
\end{proof}

The following theorem proven by Shramchenko identifies the $R(z)$ matrix of the Hurwitz Frobenius manifold with the Laplace transform of the Bergmann kernel.  It uses the Rauch's variational formula.  
\bt[Shramchenko, \cite{ShrRie}]  \label{th:shr}
Given a point $\left(\Sigma,x, ({\cal A}_i,{\cal B}_i)_{i=1,\dots,g} \right)$ in the cover of a Hurwitz space with $B(p,p')$ normalised on the ${\cal A}$-cycles together with a choice of admissible differential $\phi$ the $R(z)$ matrix of the Hurwitz Frobenius manifold is given by:
\beq \label{RHur}
\left[R^{-1}(z)\right]_{j}^i := -  \frac{\sqrt{z}}{\sqrt{2\pi}} \int_{\Gamma_j} e^{-\frac{(x(p)-u_j)}{ z}}  \, B(p,\cp_i).
\eeq

\et

The resemblance of \eqref{RHur} and \eqref{eq:Identification-MatrixR} means we are now in a position to prove Theorem~\ref{th:main}. Let us also remark that Shramchenko's result goes further than a formal series  in $z$. Indeed, \cite{ShrRie} defines integration cycles $\Gamma_i$ such that $R(z)$ is the regular part of the expansion of a solution to Dubrovin's linear system which is well defined in a half plane.

\begin{proof}[Proof of Theorem~\ref{th:main}]
The proof combines Theorem~\ref{th:DOSS}, Theorem~\ref{thm:CompatibilityTest} and Theorem~\ref{th:shr}.  

Define the spectral curve $(\Sigma,x,y,B)$ by a generic point $(\Sigma,x)\in H_{g,\mu}$ equipped with a bidifferential $B$ normalised over a given set of 
$\cal A$-cycles, and a primary differential by $dy(p):=\oint_\cc B(p,p')$ for some $\cc\in\cd$, defined in \eqref{gencon}.  If the spectral curve satisfies the conditions \eqref{eq:Identification-MatrixR}-\eqref{shape} of Theorem~\ref{th:DOSS} for the $R(z)$ matrix of the Hurwitz Frobenius manifold then topological recursion applied to the spectral curve produces the ancestor invariants of the Frobenius manifold via the decomposition of $\omega_{g,n}$ given by \eqref{TRCohFT} and the theorem is proven.

By Theorem~\ref{th:shr} the $R(z)$ matrix of the Hurwitz Frobenius manifold is given by \eqref{RHur} hence condition~\eqref{eq:Identification-MatrixR} is satisfied.  Next we need to show that the choice of $y$ is the correct one.  But since $dy(p):=\oint_\cc B(p,p')$ the poles of $dy$ are dominated by the poles---the pole behaviour of the integrals over generalised cycles described in Section~\ref{sec:dubcon} is given in \cite{Dub94}---hence the spectral curve is dominant and Corollary~\ref{spec2cohft} applies, proving that condition~\eqref{eq:Identification-Dilaton} is satisfied.  Finally condition~\eqref{shape} is satisfied by Lemma~\ref{eynlem} since $\Sigma$ is compact and $x$ is meromorphic.

\end{proof}

\subsection{Shramchenko's deformation.}

Following methods of Kokotov-Korotkin \cite{KKoNew}, Shramchenko \cite{ShrDef} defined deformations of Dubrovin's Frobenius manifold structures on $\widetilde{H}_{g,\mu}$.  See also Buryak-Shadrin \cite{BShRem}.  Recall that once we are given $(\Sigma,x,\{ {\cal A}_i,{\cal B}_i\}_{i=1,...,g})$ and $\cd$, we define a Bergman kernel and use that to define primary differentials $\phi_{\alpha}$ for $\alpha\in\cd$.  Instead of the Bergman kernel $B(p,p')$ Shramchenko considered arbitrary Bergman kernels $\om_{0,2}^{[\kappa]}(p,p')$ on $\Sigma$ which is a symmetric bidifferential on $\Sigma\times\Sigma$, with a double pole on the diagonal of zero residue, double residue equal to $1$, and no further singularities. The set of such kernels is parameterised by symmetric matrices $\kappa$ of size $g \times g$. We denote by $\om_{0,2}^{[0]}=B$ the Bergman kernel normalised in the basis of cycles chosen, i.e.
$$
\forall i = 1,\dots,g \, , \; \oint_{{\cal A}_i} \om_{0,2}^{[0]} = 0.
$$
The key ingredients in the proofs of Theorems~\ref{th:main0} and \ref{th:main} are Rauch's variational principle for $B(p,p')$ which holds more generally for Bergman kernels normalised on geometric cycles and Eynard's formula \eqref{shape} which is valid for any $B=\om_{0,2}^{[\kappa]}$.

\begin{theorem}
The conclusion of Theorems~\ref{th:main0} and \ref{th:main} holds for Frobenius manifold structures on $\widetilde{H}_{g,\mu}$ defined by $\om_{0,2}^{[\kappa]}(p,p')$ when $\kappa$ is such that there exist a basis of geometric cycles $\left({\cal A}_i^{[\kappa]}, {\cal B}_i^{[\kappa]}\right)_i$ satisfying 
$$
\forall i = 1,\dots , g \, , \; \oint_{p' \in {\cal A}_i^{[\kappa]}} \om_{0,2}^{[\kappa]}(p,p') = 0.
$$
\end{theorem}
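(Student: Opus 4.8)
The plan is to exploit the fact that the proofs of Theorems~\ref{th:main0} and~\ref{th:main} invoke the Bergman kernel only through two analytic inputs: Rauch's variational formula \eqref{Rauch} and Eynard's decomposition \eqref{shape}. Everything downstream---flatness of the metric, Shramchenko's identification of the $R$-matrix in Theorem~\ref{th:shr}, the dominance argument of Corollary~\ref{spec2cohft}, and the appeal to Theorem~\ref{th:DOSS}---is a formal consequence of these two facts. So I would simply check that both survive the replacement of $B$ by $\om_{0,2}^{[\kappa]}$ under the stated hypothesis, with the marking $({\cal A}_i,{\cal B}_i)$ replaced by $({\cal A}_i^{[\kappa]},{\cal B}_i^{[\kappa]})$ wherever the normalising marking enters; granted this, the two proofs transcribe word for word.

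Eynard's formula causes no trouble: the decomposition \eqref{shape} holds for an arbitrary Bergman kernel on a compact Riemann surface with meromorphic $x$ (Remark~\ref{rem:shape}), hence for every $\om_{0,2}^{[\kappa]}$ with no restriction on $\kappa$, and it supplies condition \eqref{shape} of Theorem~\ref{th:DOSS}. The dilaton-type compatibility \eqref{eq:Identification-Dilaton} is equally painless: since $\om_{0,2}^{[\kappa]}$ differs from $B$ by a holomorphic bidifferential, the primary differential $dy=\oint_\cc\om_{0,2}^{[\kappa]}$ differs from its $\kappa=0$ counterpart by a holomorphic differential and thus acquires no new poles. The poles of $dx$ therefore still dominate those of $dy$, the spectral curve $(\Sigma,x,y,\om_{0,2}^{[\kappa]})$ is dominant, and Corollary~\ref{spec2cohft} yields \eqref{eq:Identification-Dilaton}.

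The substance of the argument is Rauch's formula. By hypothesis there is a geometric symplectic basis $({\cal A}_i^{[\kappa]},{\cal B}_i^{[\kappa]})$ with $\oint_{{\cal A}_i^{[\kappa]}}\om_{0,2}^{[\kappa]}=0$, so $\om_{0,2}^{[\kappa]}$ is precisely the canonical Bergman kernel attached to this Torelli marking. I would then recall that \eqref{Rauch} is established by the uniqueness characterisation of the Bergman kernel: both sides are bidifferentials with the same prescribed singularities at the $\cp_i$, and the only place the normalisation intervenes is the identity $\partial_{u_j}\oint_{{\cal A}_i^{[\kappa]}}\om_{0,2}^{[\kappa]}=\oint_{{\cal A}_i^{[\kappa]}}\partial_{u_j}\om_{0,2}^{[\kappa]}=0$, which forces the right-hand side of \eqref{Rauch} to have vanishing ${\cal A}_i^{[\kappa]}$-periods as well. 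This interchange is legitimate exactly because the cycles ${\cal A}_i^{[\kappa]}$ are geometric: a genuine homology class is locally constant in the family over $\widetilde{H}^s_{g,\mu}$ and hence does not move as the critical values $u_j$ vary. The two bidifferentials then agree, giving Rauch's formula for $\om_{0,2}^{[\kappa]}$.

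The main obstacle is precisely this last point. For a general constant $\kappa$ the Lagrangian on which $\om_{0,2}^{[\kappa]}$ is normalised need not be spanned by honest contours, and if those cycles drifted with the moduli then $\partial_{u_j}$ would fail to commute with the normalising period and Rauch's formula---together with everything built on it---would collapse; the geometricity hypothesis is designed to exclude exactly this. Once Rauch's formula is secured for $\om_{0,2}^{[\kappa]}$, I would observe that Shramchenko's proof of Theorem~\ref{th:shr} uses nothing about $B$ beyond \eqref{Rauch} and the vanishing of the ${\cal A}$-periods, so it delivers the $R$-matrix formula \eqref{RHur} verbatim with $\om_{0,2}^{[\kappa]}$ in place of $B$; this supplies condition \eqref{eq:Identification-MatrixR}, and the proof of Theorem~\ref{th:main}---hence also of Theorem~\ref{th:main0}---applies unchanged.
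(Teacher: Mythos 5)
Your proposal is correct and takes essentially the same route as the paper: the paper's own (very terse) justification is precisely the observation that the only two inputs to the proofs of Theorems~\ref{th:main0} and~\ref{th:main} involving the Bergman kernel are Rauch's variational formula \eqref{Rauch}---valid for kernels normalised on geometric cycles, which is exactly what the hypothesis on $\kappa$ guarantees---and Eynard's formula \eqref{shape}, valid for any $\om_{0,2}^{[\kappa]}$. Your elaboration of why geometricity is essential (so that $\partial_{u_j}$ commutes with the normalising period integrals, the cycles being locally constant over the family) and of the dominance of the deformed primary differentials fills in details the paper leaves implicit, but the underlying argument is the same.
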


\subsection{Landau-Ginzburg model.}
In Section~\ref{vecycdif} we described a map from the tangent space at $p\in\widetilde{H}_{g,\mu}$ to the vector space spanned by primary differentials, denoted by $V^{\rm prim}_p$.  It was defined via a map to contours which are linear combinations of contours in $\cd$.  For $v\in T_p\widetilde{H}_{g,\mu}$ we defined
$$
v\mapsto\cc_v\mapsto\phi_v(p)
$$
A more direct path uses variations.  It is known as a Landau-Ginzburg model for $(\Sigma,x,dy)$ and defined by:
$$ \begin{array}{ccc}T_p\widetilde{H}_{g,\mu}&\to& V^{\rm prim}_p\\
v&\mapsto&v\cdot(-ydx)\end{array}.
$$
So the claim is that the variation gives the composition of the two maps above, i.e. $\cdot(-ydx(p))=\phi_{\cdot}(p)$.  We prove this relation in terms of flat coordinates. 
 \begin{lemma}  \label{varprimdif}
For $\cc_{\alpha}\in\cd$, the coordinate $t_{\alpha}=\int_{\cc_{\alpha}}dy$ is associated to the  differential  $\phi_{\alpha}(p)$ via 
$$\partial_{t_{\alpha}}[-y(p)dx(p)]=\phi_{\alpha}(p)=\int_{\cc_\alpha^*}B(p,p').$$
\end{lemma}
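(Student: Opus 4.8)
The plan is to prove the stronger, basis-free statement that the Landau--Ginzburg map $v\mapsto v\cdot(-y\,dx)$ coincides with the map $v\mapsto\phi_v$ constructed in Section~\ref{vecycdif}; the lemma is then the special case $v=\partial_{t_\alpha}$, for which $\phi_{\partial_{t_\alpha}}=\phi_\alpha$ was already recorded there. Both maps are $\bc$-linear from $T_p\widetilde{H}_{g,\mu}$ to the space $V^{\rm prim}_p$ of primary differentials, so it suffices to verify the identity on the canonical frame, i.e.\ to show $\partial_{u_i}(-y\,dx)=\phi_{\partial_{u_i}}$ for $i=1,\dots,N$.

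First I would check that $\partial_{u_i}(-y\,dx)$ is in fact a primary differential. Since the lifted vector fields are chosen so that $\partial_{u_i}x=0$, we have $\partial_{u_i}(-y\,dx)=-(\partial_{u_i}y)\,dx$, and Rauch's formula \eqref{vardy} gives $d(\partial_{u_i}y)=dy(\cp_i)\,B(\cdot,\cp_i)$. As $B(\cdot,\cp_i)$ is holomorphic off the diagonal, $\partial_{u_i}y$ is holomorphic on $\Sigma\setminus(x^{-1}(\infty)\cup\{\cp_i\})$ with at worst a simple pole at $\cp_i$; multiplying by $dx$ (which has a simple zero there) removes that pole, while near $x^{-1}(\infty)$ the function $\partial_{u_i}y$ is holomorphic, so the poles of $-(\partial_{u_i}y)\,dx$ are dominated by those of $dx$. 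Hence $-(\partial_{u_i}y)\,dx\in V^{\rm prim}_p$.

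The core step is to match evaluations at the critical points, in the sense of Definition~\ref{evaluationform}. In the local coordinate $\zeta_i$ with $\zeta_i^2=2(x-u_i)$, integrating Rauch's formula gives $\partial_{u_i}y=-\phi(\cp_i)/\zeta_i+(\text{holomorphic})$, so that $-(\partial_{u_i}y)\,dx=(\phi(\cp_i)+O(\zeta_i))\,d\zeta_i$ and its evaluation at $\cp_i$ equals $\phi(\cp_i)$; for $j\neq i$ the form $-(\partial_{u_i}y)\,dx$ is holomorphic at $\cp_j$ and evaluates to $0$. Thus $\big(\partial_{u_i}(-y\,dx)\big)(\cp_j)=\delta_{ij}\phi(\cp_i)$, which is exactly the value of $\phi_{\partial_{u_i}}(\cp_j)$ computed in Section~\ref{vecycdif}.

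Finally I would conclude by a dimension count. The differentials $\{\phi_{\partial_{u_k}}\}_{k=1}^N$ span the $N$-dimensional space $V^{\rm prim}_p$, and by $\phi_{\partial_{u_k}}(\cp_j)=\delta_{kj}\phi(\cp_j)$ with all $\phi(\cp_j)\neq0$ the evaluation map $V^{\rm prim}_p\to\bc^N$, $\omega\mapsto(\omega(\cp_1),\dots,\omega(\cp_N))$, is an isomorphism. Since $\partial_{u_i}(-y\,dx)$ and $\phi_{\partial_{u_i}}$ lie in $V^{\rm prim}_p$ and have equal evaluations, they are equal; this establishes the identity on the canonical frame and hence, by linearity of both maps, for every $v$. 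Taking $v=\partial_{t_\alpha}$ and using $\phi_{\partial_{t_\alpha}}=\phi_\alpha$ gives the lemma. The main obstacle is the residue computation of the third paragraph: one must align the normalisation of $B(\cdot,\cp_i)$ with the branch of $\sqrt{x-x(\cp_i)}$ fixed in Definition~\ref{evaluationform} so that the constant really comes out as $\phi(\cp_i)=dy(\cp_i)$, consistently with \eqref{dyeta}, rather than off by a numerical factor.
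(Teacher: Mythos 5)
Your core computation is sound and is essentially the same as the paper's: using a lift with $\partial\, x=0$ and Rauch's formula \eqref{vardy}, you match evaluations at the critical points in the sense of Definition~\ref{evaluationform}. (The paper works directly with $\partial_{t_\alpha}$ via \eqref{varfdy}; your detour through the canonical frame is cosmetic.) However, your concluding step has a genuine gap. From ``the poles of $-(\partial_{u_i}y)\,dx$ are dominated by those of $dx$'' you infer membership in $V^{\rm prim}_p$, the span of the primary differentials. That inference is false: $dx$ itself has poles dominated by $dx$ and evaluates to zero at every $\cp_j$ (in the local coordinate $\zeta_j$ with $x=u_j+\zeta_j^2/2$ one has $dx/\sqrt{2(x-u_j)}=d\zeta_j$, which is residue-free), so $dx$ lies in the ``dominated'' space but --- precisely because the evaluation map is injective on $V^{\rm prim}_p$, as you yourself argue --- it does not lie in $V^{\rm prim}_p$. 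Hence the space of differentials with poles dominated by $dx$ strictly contains $V^{\rm prim}_p$, and the evaluation map on that larger space has a one-dimensional kernel spanned by $dx$. Your two established facts (pole domination plus equal evaluations at all $\cp_j$) therefore only yield
$$\partial_{u_i}(-y\,dx)=\phi_{\partial_{u_i}}+c\,dx$$
for some constant $c$. This is exactly the point the paper's own proof reaches, by observing that $f(p)=\bigl(\partial_{t_\alpha}ydx(p)-\phi_\alpha(p)\bigr)/dx(p)$ is pole-free on the compact surface and hence constant.

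Removing the constant requires an extra input that you omit entirely: the paper invokes Dubrovin's dichotomy that each primary differential $\phi_\alpha$ is either \emph{strictly} dominated by $dx$ at at least one point of $x^{-1}(\infty)$ --- in which case $f$ vanishes there and $c=0$ --- or is a connection defined only up to adding $c\,dx$, in which case the ambiguity absorbs the constant. Without this step (or some substitute analysis of the behaviour at $x^{-1}(\infty)$), your isomorphism/dimension-count conclusion does not go through: it silently assumes $\partial_{u_i}(-y\,dx)\in V^{\rm prim}_p$, which is essentially equivalent to the statement being proved.
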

\begin{proof}
The main idea of the proof is to consider evaluation of $\partial_{t_{\alpha}}ydx(p)$ at $p=\cp_i$ in order to be able to integrate by parts.  From the variation of $dy$ with respect to canonical coordinates given in \eqref{vardy} we have
\beq  \label{varfdy}
\partial_{t_{\alpha}}dy(p)=\sum_i\Psi^i_{\alpha}\partial_{v_i}dy(p)=\sum_i\frac{\Psi^i_{\alpha}}{dy(\cp_i)}\partial_{u_i}dy(p)=\sum_i\frac{\Psi^i_{\alpha}}{dy(\cp_i)}dy(\cp_i)B(p,\cp_i)=\sum_i\Psi^i_{\alpha}B(p,\cp_i).
\eeq
Then
\begin{align*}
\partial_{t_{\alpha}}[-ydx](\cp_i)=-\Res_{p=\cp_i}\frac{1}{\sqrt{2(x(p)-u_i)}}\partial_{t_{\alpha}}[ydx]&
=\Res_{p=\cp_i}\sqrt{2(x(p)-u_i)}\partial_{t_{\alpha}}dy\\
&=\Res_{p=\cp_i}\sqrt{2(x(p)-u_i)}\sum_j\Psi^j_{\alpha}B(p,\cp_j)\\
&=\Res_{p=\cp_i}\sqrt{2(x(p)-u_i)}B(p,\cp_i)\Psi^i_{\alpha}\\
&=\Psi^i_{\alpha}=\phi_{\alpha}(\cp_i)
\end{align*}
where the second line uses \eqref{vardy}, the third line uses the fact that $B(p,\cp_j)$ has no pole at $\cp_i$ for $j\neq i$, the third line uses $\Res_{p=\cp_i}\sqrt{2(x(p)-u_i)}B(p,\cp_i)=1$, and the final equality uses Proposition~\ref{propsi}.

Hence 
$$\partial_{t_{\alpha}}[ydx](\cp_i)=\phi_{\alpha}(\cp_i),\quad i=1,...,N$$
which is nearly enough to guarantee that the differentials $\partial_{t_{\alpha}}ydx$ and $\phi_{\alpha}$ agree.   Define the function on $\Sigma$ by 
$$f(p)=\frac{\partial_{t_{\alpha}}ydx(p)-\phi_{\alpha}(p)}{dx(p)}.
$$
Then $f(p)$ has no poles since the numerator of $f(p)$ vanishes at $p=\cp_i$ and $dx(p)$ has simple zeros at $p=\cp_i$.  Also, from \eqref{varfdy} we see that $\partial_{t_{\alpha}}ydx$ has no poles at $x=\infty$ hence $\partial_{t_{\alpha}}ydx-\phi_{\alpha}$ has poles only at $x=\infty$, dominated by poles of $dx$, since this is true of $\phi_{\alpha}$.  In particular $f(p)$ has no poles at $x=\infty$.

Thus $f(p)=c$ constant and $\partial_{t_{\alpha}}ydx(p)=\phi_{\alpha}(p)+cdx(p)$.  In \cite{Dub94} Dubrovin proves that the differential $\phi_{\alpha}(p)$ is either strictly dominated by $dx$ at at least one point $\infty_i\in x^{-1}(\infty)$, in which case $f(\infty_i)=0$, or $\phi_{\alpha}(p)$ is a connection with ambiguity given by $cdx$ for any constant $c$.  Hence we may assume $c=0$ and the lemma is proven.
\end{proof}

We can now identify the transition matrix $\Psi$ between flat and normalised canonical vector fields in an elegant way.  Flat coordinates correspond to periods along generalised contours while canonical coordinates correspond to (finite) critical points of $x$.  The Bergman kernel allows a natural marriage of the two.
\begin{proposition}[\cite{ShrDef}]  \label{psisig}
The transition matrix $\Psi$ between flat and normalised canonical vector fields, defined in \eqref{psipde} is given by
$$
\Psi^i_{\alpha} =  \int_{p \in \cc^*_{\alpha}}  B(p,\cp_i)=\phi_{\alpha}(\cp_i).
$$
\end{proposition}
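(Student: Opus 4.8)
The plan is to introduce the matrix $\Psi^i_\alpha := \phi_\alpha(\cp_i)$ and prove in two stages that it coincides with the transition matrix determined by \eqref{psipde}. The second stated equality requires essentially nothing: by Definition~\ref{evaluationform} $\phi_\alpha(\cp_i)=\Res_{p=\cp_i}\phi_\alpha(p)/\sqrt{2(x(p)-u_i)}$, and since $\phi_\alpha(p)=\int_{\cc_\alpha^*}B(p,p')$ while the cycle $\cc_\alpha^*$ is disjoint from a small circle about $\cp_i$, I would exchange the contour integral with the residue to get $\int_{\cc_\alpha^*}\Res_{p=\cp_i}B(p,p')/\sqrt{2(x(p)-u_i)}=\int_{\cc_\alpha^*}B(\cp_i,p')$. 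So the real content is the first equality.

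The core step is to check that $\Psi^i_\alpha=\phi_\alpha(\cp_i)$ solves the defining system \eqref{psipde}, i.e.\ $\partial_{u_j}\psi_i=\beta_{ij}\psi_j$ for $i\neq j$ together with $\sum_j\partial_{u_j}\psi_i=0$. For the off-diagonal equations I would use the variational formula \eqref{vardy}, $\partial_{u_j}\phi_\alpha(p)=\phi_\alpha(\cp_j)B(p,\cp_j)$; since for $i\neq j$ the branch $\sqrt{2(x(p)-u_i)}$ in Definition~\ref{evaluationform} does not depend on $u_j$, differentiating under the residue gives $\partial_{u_j}\phi_\alpha(\cp_i)=\phi_\alpha(\cp_j)\,\Res_{p=\cp_i}B(p,\cp_j)/\sqrt{2(x(p)-u_i)}=B(\cp_i,\cp_j)\,\phi_\alpha(\cp_j)$, which is exactly $\partial_{u_j}\psi_i=\beta_{ij}\psi_j$ once the rotation coefficient is identified as the double evaluation $\beta_{ij}=B(\cp_i,\cp_j)$ --- the identification for which the Darboux--Egoroff relation \eqref{rotcoeffPDE} holds. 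For the remaining equation I would avoid the delicate diagonal variation and argue by translation invariance: the unit field $e=\sum_j\partial_{u_j}$ generates the flow $x\mapsto x+\epsilon$, which shifts every critical value by $\epsilon$, under which $B$ and the cycles $\cc_\alpha^*$ produce an unchanged $\phi_\alpha$ --- the potential corrections from the $x$-weighted pieces of the contours in $\cd$ all vanish by the defining properties of $B$ (vanishing $\cal A$-periods and regularity at $x^{-1}(\infty)$) --- while the evaluation at $\cp_i$ sees $x$ only through the invariant combination $x-u_i$. Hence $\phi_\alpha(\cp_i)$ is constant along this flow and $\sum_j\partial_{u_j}\phi_\alpha(\cp_i)=0$.

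It then remains to pin down the normalization, since \eqref{psipde} determines a fundamental solution only up to right multiplication by a constant matrix: I must show that the columns indexed by $\cc_\alpha\in\cd$ are precisely those expressing the flat frame $\partial_{t_\alpha}$ in the normalised canonical frame $\partial_{v_i}=\eta_i^{-1/2}\partial_{u_i}$. The plan is to combine two inputs from Dubrovin's Theorem~\ref{dubhur}: the flat coordinates $t_\alpha=\int_{\cc_\alpha}\phi$, and the statement that the constant flat metric is the period pairing, which in the canonical frame reads $\sum_i\phi_\alpha(\cp_i)\phi_\beta(\cp_i)=\eta_{\alpha\beta}$, i.e.\ $\Psi^{\mathrm T}\Psi=\eta$, the isometry characterising the transition matrix. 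Matching a single column against the unit field, $\Psi^i_{\alpha_0}=\phi(\cp_i)=\eta_i^{1/2}$, then fixes the residual constant-orthogonal ambiguity and identifies $\Psi^i_\alpha=\phi_\alpha(\cp_i)$ with the transition matrix.

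I expect the main obstacle to be this last normalization step rather than the variational computation: verifying \eqref{psipde} is essentially mechanical given Rauch's formula, but showing that the periods over $\cd$ furnish flat coordinates whose Gram matrix in the normalised canonical frame is precisely $\eta$ rests on the Riemann bilinear relations packaged in Theorem~\ref{dubhur}. The one point demanding care throughout is the bookkeeping of the duals $\cc_\alpha^*=\eta_{\alpha\beta}\cc_\beta$ together with the $\sqrt2$ normalization of Definition~\ref{evaluationform}, so that the factor in $\beta_{ij}=B(\cp_i,\cp_j)$ comes out correctly.
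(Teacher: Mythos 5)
Your first two stages are correct: the exchange of residue and contour integral for the second equality, the off-diagonal check $\partial_{u_j}\phi_\alpha(\cp_i)=B(\cp_i,\cp_j)\,\phi_\alpha(\cp_j)$ via \eqref{vardy}, and the translation-invariance argument for $\sum_j\partial_{u_j}\phi_\alpha(\cp_i)=0$ (a nice way to avoid the diagonal variation) all go through, granting the identification $\beta_{ij}=B(\cp_i,\cp_j)$, which indeed follows from $\eta_i=\phi(\cp_i)^2$ and \eqref{vardy}. But this half only shows that your candidate matrix $\widetilde\Psi^i_\alpha:=\phi_\alpha(\cp_i)$ is \emph{some} solution of \eqref{psipde}, i.e.\ $\widetilde\Psi=\Psi C$ for a constant matrix $C$. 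The normalisation step, which is where the content of the proposition actually sits, has a genuine gap: the two conditions you impose do not force $C=I$. If $\Psi$ solves \eqref{psipde}, satisfies $\Psi^{\mathrm T}\Psi=\eta$ and has unit-field column $(\eta_i^{1/2})_i$, then so does $\Psi C$ for \emph{every} constant $C$ in the $\eta$-orthogonal group fixing the vector $e_{\alpha_0}$; this stabiliser is nontrivial --- for a non-isotropic column it is a copy of $O(N-1,\bc)$, positive-dimensional as soon as $N\geq 3$ and containing a reflection already for $N=2$. So ``isometry plus one column'' does not characterise the transition matrix among solutions of \eqref{psipde}.

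There is also a circularity in your inputs. The Gram identity $\Psi^{\mathrm T}\Psi=\eta$ is trivial for the \emph{true} transition matrix (it is just orthonormality of the frame $\partial_{v_i}$ together with the definition of $\eta_{\alpha\beta}$), but you invoke it for the \emph{candidate}, $\sum_i\phi_\alpha(\cp_i)\phi_\beta(\cp_i)=\eta_{\alpha\beta}$, where it is a nontrivial statement: given Rauch's formula it is equivalent to the proposition itself. Theorem~\ref{dubhur} as stated does not contain it; in this paper it is \eqref{etadiff}, whose verification uses Proposition~\ref{psisig}, so within the paper's logic the citation is circular. The paper's own proof avoids both problems by a direct computation that never touches \eqref{psipde}: Rauch gives the Jacobian $\partial_{u_i}t_\alpha=dy(\cp_i)\int_{\cc_\alpha}B(p,\cp_i)$, hence $\partial_{v_i}=\sum_\alpha\int_{\cc_\alpha}B(p,\cp_i)\,\partial_{t_\alpha}$ using $\eta_i^{1/2}=dy(\cp_i)$ from \eqref{eta}; inverting this relation with the \emph{trivial} Gram identity $\Psi^{-1}=\eta^{-1}\Psi^{\mathrm T}$ then yields $\Psi^i_\gamma=\int_{\cc^*_\gamma}B(p,\cp_i)$. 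To repair your argument, replace the column-matching step by exactly this comparison: your variational computation already produces the Jacobian $\partial_{u_i}t_\alpha$, and substituting it into the defining relation $\partial_{t_\alpha}=\sum_i\Psi^i_\alpha\partial_{v_i}$ identifies $C=I$ directly, with no appeal to the period-pairing identity (which then comes out as a corollary, as in the paper's remark following the proposition).
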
  \label{propsi}
As usual the indices $i=1,...,N$ are associated to the canonical coordinates and $\alpha=1,...,N$ are associated to the flat coordinates.
\begin{proof}
We have
$$\partial_{u_i}t_{\alpha}=\partial_{u_i}\int_{\cc_{\alpha}}dy=\int_{\cc_{\alpha}}\partial_{u_i}dy=dy(\cp_i)\int_{\cc_{\alpha}}B(p,\cp_i)
$$
where the last equality uses \eqref{vardy}, hence
\beq\label{t2v}
\partial_{v_i}=\sum_{\alpha}\int_{\cc_{\alpha}}B(p,\cp_i)\; \partial_{t_\alpha}.
\eeq
Now
$$[\partial_{v_1},...,\partial_{v_N}]\Psi=[\partial_{t_1},...,\partial_{t_N}]
$$
and since $\Psi^T\Psi=\eta$, or $\Psi\eta^{-1}\Psi^T=I$
we have 
$$[\partial_{v_1},...,\partial_{v_N}]=[\partial_{t_1},...,\partial_{t_N}]\eta^{-1}\Psi^T
$$
hence 
$$\partial_{v_i}=\sum_{\alpha,\beta}\eta^{\alpha\beta}\Psi^i_{\beta}\cdot\partial_{t_\alpha}
$$
and comparing this with \eqref{t2v} we see that 
$$\sum_{\beta}\eta^{\alpha\beta}\Psi^i_{\beta}=\int_{\cc_{\alpha}}B(p,\cp_i)$$
so
$$\Psi^i_{\gamma}=\sum_{\alpha,\beta}\eta_{\gamma\alpha}\eta^{\alpha\beta}\Psi^i_{\beta}=\sum_{\alpha}\eta_{\gamma\alpha}\int_{\cc_{\alpha}}B(p,\cp_i)
=\int_{\sum_{\alpha}\eta_{\gamma\alpha}\cc_{\alpha}}B(p,\cp_i)
=\int_{\cc^*_{\gamma}}B(p,\cp_i)
$$ 
as required.  The second equality in the statement of the proposition simply uses the definition $\phi_{\alpha}(p):=\int_{p \in \cc^*_{\alpha}}  B(p,p')$.
\end{proof}
\begin{remark}
The column of $\Psi$ corresponding to the vector field gives the square root of the diagonal coefficients $\eta_i^{1/2}$ of the metric $\eta$ in canonical coordinates.  From Proposition~\ref{psisig}, we have $\eta_i  = \phi(\cp_i)^2$ which agrees with \eqref{eta}.
\end{remark}
The transition matrix $\Psi$ gives rise to the $R$ matrix of the Frobenius manifold built from a choice of point $\left(\Sigma,x, ({\cal A}_i,{\cal B}_i)_{i=1,\dots,g} \right)$ in $\widetilde{H}_{g,\mu}$ given in Theorem~\ref{th:shr} together with a choice of admissible differential $\eta$.

\subsection{Flat coordinates}  \label{sec:flat}

Let us now explain how to recover the expression of the correlators of the CohFT in flat coordinates out of integration along contours in $\cd$. 

\begin{lemma}
For any generalised contour $\cc\in\cd$ and any $(g,n) \in \mathbb{N}\times \mathbb{N}^*$, the map 
$$\omega_{g,n}\mapsto\int_{\cc}\omega_{g,n}$$
defining the action of integration of the correlation functions is well defined.
\end{lemma}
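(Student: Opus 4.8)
The plan is to reduce the claim to three structural properties of the topological recursion differentials and then to check each of the five types of contour listed after Theorem~\ref{dubhur} in turn. First I would recall that for $2g-2+n>0$ the multidifferential $\omega_{g,n}$ is, in each of its variables, meromorphic with poles only at the ramification points $\cp_1,\dots,\cp_N$, holomorphic at every point of $x^{-1}(\infty)$, and anti-invariant under the local involution $\sigma_i$ near each $\cp_i$; all three follow from the recursion \eqref{TR} and the pole structure of its kernel (see \cite{EOInv}). The anti-invariance is the property that does the real work: in a local coordinate $z_i$ with $x-u_i=z_i^2$ one has $\omega_{g,n}=f(z_i)\,dz_i$ with $f$ even, so $\Res_{\cp_i}\omega_{g,n}=0$, and since every function of $x$ is even in $z_i$ (because $x-u_i=z_i^2$), the same parity count gives $\Res_{\cp_i}\!\big(x^m\omega_{g,n}\big)=0$ for all $m\ge 0$. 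Here ``well defined'' I read as: the pairing $\int_{\cc}\omega_{g,n}$ converges and is independent of the representative of the homology class, of the chosen path, and of the branch of $x^{k/\mu}$ entering the description of $\cc$.

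For the closed contours I would argue by homology invariance. Type (5), $\oint_{\cb_i}\omega_{g,n}$, is unchanged under a change of representative in $H_1(\Sigma\setminus x^{-1}(\infty))$ because any two representatives cobound a $2$-chain meeting the poles of $\omega_{g,n}$ only at the $\cp_i$, where the residues vanish. For Type (4), $-\oint_{\ca_i}x\,\omega_{g,n}$, the same argument applies to the meromorphic differential $x\,\omega_{g,n}$; its new poles at $x^{-1}(\infty)$ are irrelevant since $\ca_i$ and the bounding chain avoid $x^{-1}(\infty)$, while the residues at the $\cp_i$ again vanish by the parity count above. Types (2) and (1) are residues at a single pole $\infty_i$ of $x\,\omega_{g,n}$ and of $x^{k/\mu_i}\omega_{g,n}$ respectively; these are well defined provided the integrand is single valued near $\infty_i$, which holds because $x$ has a pole of order exactly $\mu_i$ there, so in a local coordinate $s$ with $x\sim s^{-\mu_i}$ the factor $x^{k/\mu_i}\sim s^{-k}$ is single valued (as $1\le k\le\mu_i-1$).

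It remains to treat the open contour of Type (3), the principal-value integral of $\omega_{g,n}$ along a path from $\infty_1$ to $\infty_i$, together with the unstable exponents. For $2g-2+n>0$ the endpoints lie in $x^{-1}(\infty)$, where $\omega_{g,n}$ is holomorphic, so the integral converges as an ordinary integral and the principal value is superfluous; independence of the path (assumed to avoid the $\cp_k$) follows once more from the vanishing residues at the $\cp_k$. For the unstable cases $(0,2)$ and $(0,1)$ one argues directly: integrating $\omega_{0,2}=B$ in one variable reproduces a primary differential $\phi_\cc$ via \eqref{primdif}, well defined because the diagonal double pole of $B$ has zero residue; and for $\omega_{0,1}=y\,dx$, whose poles now sit at $x^{-1}(\infty)$, the residue and principal-value prescriptions in the five contour types are precisely the regularisations that render the pairing finite, reproducing Dubrovin's flat coordinates. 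The main obstacle, and the point where the careful choice of $\cd$ is essential, is the appearance of the weights $x$ and $x^{k/\mu_i}$: integer weights on closed cycles force one to control the residues of $x^m\omega_{g,n}$ at the ramification points, which is possible only because of the $\sigma_i$-anti-invariance, while the fractional weights on the pole circles are admissible precisely because the exponent is tuned to the ramification order $\mu_i$.
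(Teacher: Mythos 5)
Your proof is correct and takes essentially the same route as the paper, whose own argument is just the terse version: well-definedness reduces to invariance under deformation of the contour, which follows from the vanishing of the residues of $\omega_{g,n}$ and $x\omega_{g,n}$ at the points $\cp_i$, together with the restriction that contours may not be deformed through $x^{-1}(\infty)$, where residues of $x^m\omega_{g,n}$ can be nonzero. One small caveat: your claim of exact $\sigma_i$-anti-invariance of $\omega_{g,n}$ near $\cp_i$ is slightly too strong---in general only the polar part of $\omega_{g,n}$ at $\cp_i$ is anti-invariant, the holomorphic part need not be---but that weaker statement is all your parity count uses, so the argument stands.
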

\begin{proof}
Since $\cc$ is only an isotopy class of contours (with coefficients that are functions of $x$) in $\Sigma\setminus x^{-1}(\infty)$ and $\omega_{g,n}$ has poles in $\Sigma\setminus x^{-1}(\infty)$ we need to prove that the integral is independent of the choice of contour.  This is a consequence of the fact that $\omega_{g,n}$ and $x\omega_{g,n}$ have zero residues at $\cp_i$.  Note that the residues at $\infty_j$ might not be zero, but the contours are not allowed to deform through $\infty_j$.
\end{proof}
\begin{proposition}  \label{thmRBI}
For any $\cc\in\cd$ define $\phi_{\cc}(p)=\int_{\cc}B(p,p')=df_{\cc}$ where $f_{\cc}$ is locally valued.  As operators acting on $\omega_{g,n}$ for $2g-2+n>0$, 
$$\sum_i\Res_{\cp_i}f_{\cc}\cdot=\int_{\cc}\cdot\quad$$
in other words, 
$$\sum_i\Res_{p=\cp_i}f_{\cc}(p)\omega_{g,n}(p,p_2,...,p_n)=\int_{p\in\cc}\omega_{g,n}(p,p_2,...,p_n).$$
\end{proposition}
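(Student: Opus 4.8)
The plan is to reduce everything to a single pointwise reconstruction identity for the $1$-form $\omega:=\omega_{g,n}(\cdot,p_2,\dots,p_n)$, regarded as a meromorphic differential in its first argument, and then to integrate that identity over $\cc$. The inputs are the standard properties of topological recursion in the stable range $2g-2+n>0$: the form $\omega$ has poles only at the zeros $\cp_1,\dots,\cp_N$ of $dx$, it has vanishing residues there, and all of its $\ca$-periods vanish, $\oint_{p\in\ca_j}\omega=0$ (these are inherited from $\omega_{0,2}=B$ through the recursion \eqref{TR}). I will fix once and for all a primitive of the Bergman kernel in its first slot, $E(p,p'):=\int^{p}B(\cdot,p')$, so that $d_pE(p,p')=B(p,p')$, and in a local coordinate $z$ near the diagonal $E(p,p')=-\tfrac{1}{z-z'}\,dz'+(\text{regular})$; in particular $\Res_{p=p'}E(p,p')\,\omega(p)=-\omega(p')$.

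The heart of the argument is the reconstruction formula
$$\omega(p')=\sum_{i=1}^{N}\Res_{p=\cp_i}E(p,p')\,\omega(p),$$
valid for every $p'$ away from the $\cp_i$. To prove it I would apply the residue theorem to the (a priori multivalued) $1$-form $p\mapsto E(p,p')\,\omega(p)$ on $\Sigma$ cut along the chosen symplectic basis. Its only poles are at the $\cp_i$ and at $p=p'$, so the sum of all its residues is not zero but equals a Riemann bilinear boundary term; the latter is a sum over $j$ in which each summand carries a factor $\oint_{\ca_j}B(\cdot,p')$ or $\oint_{\ca_j}\omega$. Here both normalizations enter decisively: $\oint_{\ca_j}B(\cdot,p')=0$ by Definition~\ref{bergman} and $\oint_{\ca_j}\omega=0$ by the $\ca$-period property of $\omega$, so the boundary term vanishes identically. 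Combined with $\Res_{p=p'}E\,\omega=-\omega(p')$ this yields the displayed identity with the correct sign.

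With the reconstruction in hand I would integrate it against $\cc$ in the variable $p'$, carrying along whatever function of $x$ the contour $\cc$ prescribes, as in \eqref{gencon}. Writing $f_{\cc}(p)=\oint_{p'\in\cc}E(p,p')$ one checks $d_pf_{\cc}=\oint_{\cc}B(p,\cdot)=\phi_{\cc}$, so $f_\cc$ is exactly a primitive of $\phi_\cc$ as in the statement. Interchanging $\oint_{p'\in\cc}$ with $\sum_i\Res_{p=\cp_i}$ — legitimate since $\cc$ is disjoint from the $\cp_i$ and by the well-definedness established in the preceding lemma — then gives
$$\sum_{i}\Res_{p=\cp_i}f_{\cc}(p)\,\omega(p)=\oint_{p'\in\cc}\Big(\sum_i\Res_{p=\cp_i}E(p,p')\,\omega(p)\Big)=\oint_{p'\in\cc}\omega(p')=\int_{\cc}\omega,$$
which is the assertion. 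The computation covers all five types of contour in $\cd$ uniformly, because the reconstruction holds pointwise in $p'$ and any weight $\mathrm{wt}(x(p'))$ simply multiplies through. The ambiguity in $f_\cc$ (it is only locally defined) is harmless: changing $f_\cc$ by a locally constant amount alters $\sum_i\Res_{\cp_i}f_\cc\,\omega$ by a multiple of $\sum_i\Res_{\cp_i}\omega=0$, again because $\omega$ is residue-free at the $\cp_i$.

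The step I expect to be the main obstacle is the reconstruction formula, and specifically the bookkeeping of the multivaluedness of $E(\cdot,p')$: one must check that the only surviving contribution to the residue theorem is the Riemann bilinear boundary term, and that this term vanishes on the nose. This is precisely where the two zero-$\ca$-period conditions — on $B$ and on $\omega_{g,n}$ — are indispensable, and any version of the statement that relaxed either normalization would acquire an explicit correction here.
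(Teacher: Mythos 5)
Your proof is correct, but it is organized genuinely differently from the paper's. The paper applies Riemann's bilinear identity \eqref{RBI} directly to the pair $(\phi_{\cc},\omega_{g,n})$, which forces a case-by-case analysis over the five types of contours in $\cd$: the residueless types (1, 2, 5) admit a direct application (with the contribution at $\infty_i$ extracted from the asymptotics $f_{\cc}\sim -x^{k/(n_i+1)}$), the third-kind case requires switching the roles of $\phi$ and $\omega$ and working with a primitive $F_{g,n}$ of $\omega_{g,n}$, and the case $\cc=x\ca_i$ requires redoing the proof of \eqref{RBI} on the cut surface because $\phi_{\cc}$ is then not single-valued. You instead run the residue-theorem-on-the-cut-surface argument exactly once, for the pair $(B(\cdot,p'),\omega_{g,n})$ with $p'$ as a parameter---your reconstruction formula is precisely \eqref{RBI} with $\phi=B(\cdot,p')$, which is always residueless and globally defined, so both boundary factors $\oint_{\ca_j}B(\cdot,p')$ and $\oint_{\ca_j}\omega_{g,n}$ vanish uniformly---and then every case of the proposition follows by multiplying the pointwise identity by the weight in $x(p')$ carried by $\cc$ and integrating over the geometric contour. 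This buys uniformity (the per-type tricks disappear, and the argument covers arbitrary weighted contours avoiding the $\cp_i$), at the cost of two routine verifications you should spell out: (i) the interchange of $\oint_{p'\in\cc}$ with $\sum_i\Res_{p=\cp_i}$ is immediate for compact contours disjoint from the $\cp_i$, but for the type 3 contours running between points of $x^{-1}(\infty)$ one should note that $E(p,p')$ is regular in $p'$ at $x^{-1}(\infty)$, so the improper integral converges; (ii) for $\cc$ of types 4 and 5 the integration contour coincides with one of the cuts defining your cut surface, so the reconstruction formula as proven holds only for $p'$ off the cuts, and you need either to deform the homology representatives away from $p'$ (the right-hand side is unchanged, since branch ambiguities of $E$ near $\cp_i$ are constants that pair to zero against the residue-free $\omega_{g,n}$) or to invoke analyticity of both sides in $p'$. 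Neither point is an obstacle, so your proposal stands as a cleaner, unified alternative to the paper's case-by-case argument.
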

\begin{proof}
Recall Riemann's bilinear relation.  For meromorphic differentials $\phi$ and $\omega$ such that $\phi$ is residueless
\beq  \label{RBI}
\sum_P\Res_P f\cdot\omega=\frac{1}{2\pi i}\sum_{j=1}^g \left[ \oint_{\ca_i}\phi\oint_{\cb_i}\omega-\oint_{\cb_i}\phi\oint_{\ca_i}\omega \right]
\eeq
where $df=\phi$ for a locally defined primitive $f$ and the sum is over all poles $P$ of $\phi$ and $\omega$.

Primary differentials $\phi_{\cc}$ of types 1, 2 and 5, with respect to the classification given in Section~\ref{dubclass}, are residueless so apply \eqref{RBI} to $\phi=\phi_{\cc}$ and $\omega=\omega_{g,n}$.

For $\cc=\cb_j$, $i=j,...,g$, $\phi_{\cc}(p)=\int_{\cb_j}B(p,p')=\theta_j=df_{\cc}$ ($f$ defined locally) is a holomorphic differential satisfying $\int_{\ca_k}\theta_j=2\pi i\cdot\delta_{jk}$. Then \eqref{RBI} becomes:
$$
\sum_i\Res_{\cp_i} f_{\cc}\cdot\omega_{g,n}=\frac{1}{2\pi i}\sum_{k=1}^g \left[ \oint_{\ca_k}\phi_{\cc}\oint_{\cb_k}\omega_{g,n}-\oint_{\cb_k}\phi_{\cc}\oint_{\ca_k}\omega_{g,n} \right]
=\oint_{\cb_j}\omega_{g,n}=\oint_{\cc}\omega_{g,n}
$$
since $\oint_{\ca_k}\omega_{g,n}=0$.

For $\cc=x^{k/(n_i+1)}\cc_i$, $k=1,...,n_i+1$, $i=1,...,d$,  (which this includes both types 1 and 2) then $\phi_{\cc}=\Res_{\infty_i}x^{k/(n_i+1)}B=df_{\cc}$ is residueless and normalised so that $\int_{\ca_i}\phi_{\cc}=0$.  Then \eqref{RBI} becomes
\begin{align*}
\sum_{P=\cp_k,\infty_{\ell}}\Res_{P} f_{\cc}\cdot\omega_{g,n}&=\frac{1}{2\pi i}\sum_{j=1}^g \left[ \oint_{\ca_j}\phi_{\cc}\oint_{\cb_j}\omega_{g,n}-\oint_{\cb_j}\phi_{\cc}\oint_{\ca_j}\omega_{g,n} \right]=0\\
\Rightarrow \sum_k\Res_{\cp_k} f_{\cc}\cdot\omega_{g,n}&=-\Res_{\infty_i} f_{\cc}\cdot\omega_{g,n}=\oint_{\cc}\omega_{g,n}
\end{align*}
where the last equality uses the fact that $f_{\cc}\sim -x^{k/(n_i+1)}$ near $\infty_i$.

For $\cc=x\cc_i$, $i=1,...,d$, $\phi_{\cc}=\int_{\Gamma_i}B$ is a differential of the 3rd kind with simple poles at $\infty_1$ and $\infty_i$ normalised so that $\int_{\ca_k}\phi_{\cc}=0$.  Since $\omega_{g,n}$ is residueless we switch the roles of $\phi$ and $\omega$ in \eqref{RBI}.  Choose $F_{g,n}$ such that $dF_{g,n}=\omega_{g,n}$, i.e. a primitive with respect to one variable.  Then
\begin{align*}
\sum_{p=\cp_k,\infty_{\ell}}\Res_pF_{g,n}(p,p_2,...,p_n)\phi_{\cc}(p)&=\frac{1}{2\pi i}\sum_{j=1}^g \left[\oint_{\ca_j}\phi_{\cc}\oint_{\cb_j}\omega_{g,n}-\oint_{\cb_j}\phi_{\cc}\oint_{\ca_j}\omega_{g,n}\right]=0\\
\Rightarrow \sum_k\Res_{\cp_k} f_{\cc}(p)\omega_{g,n}(p,p_2,...,p_n)&=-\sum_k\Res_{\cp_k} F_{g,n}(p,p_2,...,p_n)\phi_{\cc}(p)\\
&=\Res_{p=\infty_i} F_{g,n}(p,p_2,...,p_n)\phi_{\cc}(p)+\Res_{p=\infty_1} F_{g,n}(p,p_2,...,p_n)\phi_{\cc}(p)\\
&=F_{g,n}(\infty_i,p_2,...,p_n)-F_{g,n}(\infty_1,p_2,...,p_n)\\
&=\int^{\infty_i}_{\infty_1}\omega_{g,n}=\int_{\cc}\omega_{g,n}
\end{align*}

For $\cc=x\ca_i$, $i=1,...,g$, we cannot apply \eqref{RBI} directly since $\phi_{\cc}$ is not a globally defined differential.  Instead we need to apply the proof of \eqref{RBI} as follows.   Cut $\Sigma$ along $\ca$ and $\cb$ cycles meeting at a common point $P_0$ to leave a simply-connected region $R\subset\Sigma$ on which $\phi_{\cc}$ and a primitive (with respect to one variable) $F_{g,n}(p)$ of $\omega_{g,n}(p)$ (suppress variables $p_2,...,p_n$) are well-defined.  As in the proof of \eqref{RBI} integrate $\frac{1}{2\pi i}\phi_{\cc}F_{g,n}$ along the boundary of $R$ given by the $\ca$ and $\cb$ cycles to get
\begin{align*}
\sum_{p=\cp_k}\Res_pF_{g,n}(p)\phi_{\cc}(p)&=\sum_{j=1}^g \left[\oint_{\ca_j}\phi_{\cc}\oint_{\cb_j}\omega_{g,n}-\oint_{\cb_j}\phi_{\cc}\oint_{\ca_j}\omega_{g,n}\right] -\int_{P_0+\cb_i}^{P_0+\cb_i+\ca_i}F_{g,n}(p)dx(p)\\
&=-\int_{P_0+\cb_i}^{P_0+\cb_i+\ca_i}F_{g,n}(p)dx(p)\\
&=-\int_{\ca_i}x(p)\omega_{g,n}(p)=-\int_{\cc}\omega_{g,n}(p)\\
\Rightarrow\sum_{p=\cp_k}\Res_pf_{\cc}(p)\omega_{g,n}(p)&=\int_{\cc}\omega_{g,n}(p).
\end{align*}

\end{proof}

Theorem~\ref{th:main} proved that topological recursion applied to the spectral curve $\left(\Sigma,x, ({\cal A}_i,{\cal B}_i)_{i=1,\dots,g} \right)$ with a choice of admissible differential $\phi=dy$ and $\om_{0,2} = B$, stores the ancestor invariants of the Hurwitz Frobenius manifold and hence proves Theorem~\ref{th:main0}.  We now prove the remainder of the statement of Theorem~\ref{th:main0} by showing how to extract the ancestor invariants via integration over generalised contours.
\begin{proposition}  \label{maincont}
Integration over flat contours $\cc_{\alpha}\in\cd$ produces primary invariants:
$$
\int_{\cc_{\alpha_1}}...\int_{\cc_{\alpha_n}}\omega_{g,n}=\int_{\overline{\modm}_{g,n}}I_{g,n}\Big(e^{\alpha_1}\otimes ...\otimes e^{\alpha_n}\Big)
$$
\end{proposition}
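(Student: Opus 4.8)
The plan is to feed the decomposition of Theorem~\ref{th:main} into the left--hand side and integrate factor by factor. Writing $\partial_{v_i}$ for the normalised canonical frame (the basis denoted $e_i$ in Theorem~\ref{th:main}) and $V^i_k$ for the canonical differentials of \eqref{Vdiff}, we have
\[
\omega_{g,n}=\sum_{\substack{i_1,\dots,i_n\\ d_1,\dots,d_n}}\int_{\overline{\modm}_{g,n}}I_{g,n}(\partial_{v_{i_1}},\dots,\partial_{v_{i_n}})\prod_{j=1}^n\psi_j^{d_j}\cdot\bigotimes_{j=1}^n V^{i_j}_{d_j}(p_j).
\]
This is a finite sum whose $p$-dependence factorises as a tensor product and whose coefficients $\int_{\overline{\modm}_{g,n}}I_{g,n}(\cdots)\prod_j\psi_j^{d_j}$ are constants, so integrating $p_j$ over $\cc_{\alpha_j}$ for each $j$ commutes with the sum and produces a product $\prod_j\int_{\cc_{\alpha_j}}V^{i_j}_{d_j}$. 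The whole statement then reduces to two facts about the one--variable periods $\int_{\cc_{\alpha}}V^i_{d}$: that $\int_{\cc_{\alpha}}V^i_{0}$ is precisely the coefficient converting the canonical frame $\partial_{v_i}$ into the dual flat vector $e^{\alpha}$, and that $\int_{\cc_{\alpha}}V^i_{d}=0$ for all $d\ge 1$ and all $\cc_{\alpha}\in\cd$, so that only the primary terms $d_1=\dots=d_n=0$ survive.

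For the first fact I would use $V^i_0(p)=B(\cp_i,p)$, the symmetry of $B$, and the relation $\cc^*_{\alpha}=\sum_{\beta}\eta_{\alpha\beta}\cc_{\beta}$, which together with Proposition~\ref{psisig} give $\int_{\cc_{\alpha}}V^i_0=\int_{\cc_{\alpha}}B(p,\cp_i)=\sum_{\gamma}\eta^{\alpha\gamma}\Psi^i_{\gamma}$. On the other hand, from $\partial_{t_\gamma}=\sum_i\Psi^i_\gamma\,\partial_{v_i}$ one computes
\[
e^{\alpha}=\sum_{\gamma}\eta^{\alpha\gamma}\partial_{t_\gamma}=\sum_i\Big(\sum_{\gamma}\eta^{\alpha\gamma}\Psi^i_{\gamma}\Big)\partial_{v_i},
\]
so $\int_{\cc_{\alpha}}V^i_0$ is exactly the coefficient of $\partial_{v_i}$ in $e^{\alpha}$. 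Substituting this into the $d_j=0$ part of the decomposition and using multilinearity of $I_{g,n}$ collapses the sums over $i_1,\dots,i_n$ into $\int_{\overline{\modm}_{g,n}}I_{g,n}(e^{\alpha_1},\dots,e^{\alpha_n})$, which is the required right--hand side.

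The crux, and the step I expect to be the main obstacle, is the vanishing $\int_{\cc_{\alpha}}V^i_{d}=0$ for $d\ge 1$. I would first establish, by one induction on $d$, two structural properties of the canonical differentials: each $V^i_d$ is holomorphic at every pole $\infty_j$ of $x$ (the base case $V^i_0=B(\cp_i,\cdot)$ has its only pole at $\cp_i$, and the inductive step holds because then $V^i_d/dx$ vanishes at $\infty_j$), and each $V^i_d$ has vanishing $\cal A$-periods (for $d=0$ by the normalisation of $B$, for $d\ge 1$ because $V^i_d$ is exact). For $d\ge 1$ write $V^i_d=dg$ with $g:=V^i_{d-1}/dx$ a globally single--valued meromorphic function that moreover vanishes at each $\infty_j$. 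One then runs through the five families of contours classified in Section~\ref{dubclass}: on the cycles $\cb_i$ and the small circles, $\oint dg=0$; on the paths of Type~(3), $\int_{\infty_1}^{\infty_i}dg=g(\infty_i)-g(\infty_1)=0$ since $g$ vanishes at both poles; on the Type~(4) functional, integration by parts against the single--valued function $xg$ gives $-\oint_{\ca_i}x\,dg=\oint_{\ca_i}g\,dx=\oint_{\ca_i}V^i_{d-1}=0$ by the $\cal A$-period property; and on the residue functionals of Types~(1) and~(2), a local count at $\infty_i$ (where $g$ vanishes to order at least $\mu_i+1$ while $x^{k/\mu_i}$ with $k\le\mu_i-1$, respectively $x$, has a pole of order at most $\mu_i-1$, respectively $\mu_i$) shows the integrand carries no $\tfrac{dt}{t}$ term, so no residue is produced; the Type~(2) case also follows directly from $\Res_{\infty_i}x\,dg=-\Res_{\infty_i}V^i_{d-1}=0$. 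The only genuinely delicate bookkeeping is tracking these orders of vanishing at the poles of $x$ for the Type~(1) residues; the rest is formal manipulation of the decomposition.
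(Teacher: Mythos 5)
Your proof is correct, and it reaches the result by a genuinely different route from the paper's. The paper proves the dual identity $\int_{\cc^*_{\alpha_1}}\cdots\int_{\cc^*_{\alpha_n}}\omega_{g,n}=\int_{\overline{\modm}_{g,n}}I_{g,n}\big(e_{\alpha_1}\otimes\cdots\otimes e_{\alpha_n}\big)$ by first invoking the operator identity of Proposition~\ref{thmRBI}, $\int_{\cc}\,\cdot=\sum_i\Res_{\cp_i}f_{\cc}\,\cdot$ (itself proved case by case over Dubrovin's five contour types via the Riemann bilinear relations), and then letting the residue operator $\sum_i\Res_{\cp_i}y_{\alpha}\,\cdot$ act on the decomposition \eqref{TRCohFT}: it annihilates $V^i_k$ for $k>0$ and returns $\Psi^i_{\alpha}$ on $V^i_0$ by the Cauchy-kernel property, hence acts as insertion of $\partial_{t_\alpha}$. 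You never use Proposition~\ref{thmRBI}; instead you integrate the decomposition termwise and compute the one-variable periods directly: $\int_{\cc_{\alpha}}V^i_0=\sum_{\gamma}\eta^{\alpha\gamma}\Psi^i_{\gamma}$ via Proposition~\ref{psisig} (an ingredient shared with the paper), and $\int_{\cc_{\alpha}}V^i_d=0$ for $d\ge1$ by exactness of $V^i_d=dg$ together with a structural induction giving holomorphy of the $V^i_d$ at $x^{-1}(\infty)$, vanishing ${\cal A}$-periods, and the order-of-vanishing bounds needed for the residue-type contours. So both arguments run a five-case analysis over $\cd$, but of different statements: the paper's case analysis buys a reusable ``periods equal residues at critical points'' identity, which reduces the suppression of all $\psi$-class terms to a single local residue statement, while yours is more elementary and self-contained---it bypasses the bilinear relations entirely---at the price of the bookkeeping of vanishing orders at the poles of $x$, which the paper never needs explicitly. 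Two small points: in your Type (4) computation the critical-point index should be arbitrary, i.e. $\oint_{\ca_i}x\,dg=-\oint_{\ca_i}V^j_{d-1}$ up to sign for any $j$, not just $j=i$; and you implicitly assume the periods $\int_{\cc}\omega_{g,n}$ are well defined (independent of the representative of the contour class), which the paper isolates in a short preliminary lemma---it follows from residue computations of exactly the kind you already carry out, since $V^i_d$ for $d\ge 1$ and $xV^i_d$ have vanishing residues at each $\cp_k$.
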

\begin{proof}
We will prove the dual statement
\beq  \label{cycleom}
\int_{\cc^*_{\alpha_1}}...\int_{\cc^*_{\alpha_n}}\omega_{g,n}=\int_{\overline{\modm}_{g,n}}I_{g,n}\Big(e_{\alpha_1}\otimes ...\otimes e_{\alpha_n}\Big).
\eeq
For $k>0$, 
$$\sum_i\Res_{\cp_i}y_{\alpha}\cdot V_k^i=0,\quad k>0
$$
where $V_k^i$ are defined in \eqref{Vdiff} and $dy_\alpha = \phi_\alpha$.  Hence the operator $\sum_i\Res_{\cp_i}y_{\alpha}\cdot$ only detects coefficients of $V_0^i(p)=B(\cp_i,p)$ in $\omega_{g,n}$ which stores the primary invariants by \eqref{TRCohFT}.  Now
 
$$
\Res_{\cp_j}y_{\alpha}(p)\cdot V_0^i(p)=\Res_{\cp_j}y_{\alpha}(p)\cdot B(\cp_i,p)=\delta_{ij}\phi_{\alpha}(\cp_j)=\Psi^i_{\alpha}
$$
since $B$ acts as a Cauchy kernel which sends $y_{\alpha}$ to evaluation of $dy_{\alpha}$.
Hence $\displaystyle\sum_i\Res_{\cp_i}y_{\alpha}\cdot$ acts as insertion of the vector 
$$\Psi^i_{\alpha}\cdot\partial_{v_i}=\partial_{t_{\alpha}}=e_{\alpha}
$$
into the ancestor invariant.  Thus, using Proposition~\ref{thmRBI} we see that as an operator on $\omega_{g,n}$
$$\int_{\cc^*_{\alpha}}\cdot=\sum_i\Res_{\cp_i}y_{\alpha}\cdot$$
acts as insertion of the vector $e^{\alpha}$ into the ancestor invariant and in particular \eqref{cycleom} holds.  Note that since $\cc^*_{\alpha}=\sum_{\beta}\eta_{\alpha\beta}\cc_{\beta}$ is a constant linear combination of contours in $\cd$, then Proposition~\ref{thmRBI} applies also to $\cc^*_{\alpha}$.
\end{proof}
\begin{remark}
Let us apply Proposition~\ref{maincont}, or more precisely \eqref{cycleom}, to the simplest case of $\omega_{0,3}$ to get the following.
\begin{align*}
C_{\alpha\beta\gamma}&=\int_{\overline{\modm}_{0,3}}I_{0,3}\Big(e_{\alpha}\otimes e_{\beta}\otimes e_{\gamma}\Big)\\
&=\int_{\cc^*_{\alpha}}\int_{\cc^*_{\beta}}\int_{\cc^*_{\gamma}}\omega_{0,3}\\
&=\int_{\cc^*_{\alpha}}\int_{\cc^*_{\beta}}\int_{\cc^*_{\gamma}}\sum_i\Res_{p=\cp_i}\frac{B(p_1,p)B(p_2,p)B(p_3,p)}{dx(p)dy(p)}\\
&=\sum_i\Res_{p=\cp_i}\int_{\cc^*_{\alpha}}\int_{\cc^*_{\beta}}\int_{\cc^*_{\gamma}}\frac{B(p_1,p)B(p_2,p)B(p_3,p)}{dx(p)dy(p)}\\
&=\sum_i\Res_{p=\cp_i}\frac{\phi_{\alpha}(p)\phi_{\beta}(p)\phi_{\gamma}(p)}{dx(p)dy(p)}
\end{align*}
which agrees with \eqref{threept} as expected.  Here we have used the formula 
$$
\omega_{0,3}(p_1,p_2,p_3)=\sum_i\Res_{p=\cp_i}\frac{B(p_1,p)B(p_2,p)B(p_3,p)}{dx(p)dy(p)}
$$
proven in \cite{EOInv}.
\end{remark}
The following proposition generalises Theorem~\ref{th:main0}.
\begin{proposition}
There exist generalised contours $\cc_{\alpha,k}=p_k(x)\cc_{\alpha}$, for $\cc_{\alpha}\in\cd$ and $p_k(x)=x^k+...$ a monic polynomial of degree $k$ in $x$, so that the ancestor invariants, corresponding to $d_j\geq 0$, appear as periods. 
\beq
\int_{\cc_{\alpha_1,k_1}}...\int_{\cc_{\alpha_n,k_n}}\omega_{g,n}=\int_{\overline{\modm}_{g,n}}I_{g,n}\Big(e^{\alpha_1}\otimes ...\otimes e^{\alpha_n}\Big)\cdot\prod_{j=1}^n\psi_j^{k_j}.
\eeq

\end{proposition}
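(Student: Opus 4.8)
The plan is to follow the template of Proposition~\ref{maincont}: pass to the dual statement and isolate a single-slot integration operator. Writing $\cc^*_{\alpha,k}=p_k(x)\cc^*_\alpha$, so that $\int_{\cc^*_{\alpha,k}}\omega=\int_{\cc^*_\alpha}p_k(x)\omega$, and using that $\cc^*_{\alpha,k}$ is a constant $\eta$-linear combination of the generalised contours, it suffices to prove
\[
\int_{\cc^*_{\alpha_1,k_1}}\cdots\int_{\cc^*_{\alpha_n,k_n}}\omega_{g,n}=\int_{\overline{\modm}_{g,n}}I_{g,n}\big(e_{\alpha_1}\otimes\cdots\otimes e_{\alpha_n}\big)\cdot\prod_{j=1}^n\psi_j^{k_j}.
\]
Feeding in the canonical decomposition \eqref{TRCohFT}, this reduces to showing that $\int_{\cc^*_{\alpha,k}}$ inserts the flat vector $e_\alpha=\sum_i\Psi^i_\alpha\partial_{v_i}$ together with the single factor $\psi^{k}$; concretely, the whole statement follows from the orthogonality relation $\int_{\cc^*_{\alpha,k}}V^i_d=\delta_{kd}\,\Psi^i_\alpha$ for the canonical differentials of \eqref{Vdiff} (up to an overall normalisation constant absorbed into the conventions of \eqref{Vdiff} and \eqref{TRCohFT}).

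First I would upgrade Proposition~\ref{thmRBI} to the weighted integrand $p_k(x)\omega_{g,n}$. The only input used there is that $\omega_{g,n}$ is residueless at each $\cp_i$; since near a simple branch point $\omega_{g,n}$ is anti-invariant under the local involution $\sigma_i$ while $p_k(x)$ is $\sigma_i$-invariant, the product $p_k(x)\omega_{g,n}$ stays anti-invariant and hence residueless at $\cp_i$, so integration along $\cc^*_{\alpha,k}$ is well defined and the Riemann bilinear argument converts it into residues. The new feature compared with Proposition~\ref{maincont} is that $p_k(x)$ creates poles along $x^{-1}(\infty)$, so the bilinear identity produces residues at the punctures $\infty_l$ in addition to those at the branch points $\cp_i$, and both must be tracked.

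For the orthogonality itself I would run an induction built on the recursion $V^i_{d}=d(V^i_{d-1}/dx)$. Integration by parts trades one factor of $x$ against a lowering $d\mapsto d-1$: from $d\!\left(x\,V^i_{d-1}/dx\right)=V^i_{d-1}+x\,V^i_{d}$ one gets $\int_{\cc^*_\alpha}x\,V^i_d=-\int_{\cc^*_\alpha}V^i_{d-1}$ for closed contours, with boundary terms for the open contours of types $3$ and $4$. Iterating, a monic weight of degree $k$ collapses the pairing down to the base case on $V^i_0$, which is exactly the content feeding Proposition~\ref{maincont}, namely $\sum_i\Res_{\cp_i}y_\alpha V^i_0=\Psi^i_\alpha$ together with $\sum_i\Res_{\cp_i}y_\alpha V^i_d=0$ for $d>0$; the differentials $V^i_d$ with $d>k$ contribute nothing since each integration by parts lowers the pole order at $\cp_i$. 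The subleading coefficients of $p_k(x)=x^k+c_{k-1}x^{k-1}+\cdots+c_0$ are then to be chosen recursively so as to cancel the contributions of $V^i_d$ with $d<k$ that this reduction generates.

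The main obstacle I anticipate lies precisely in this last step: a naive Stokes reduction produces off-diagonal terms of the shape $\int_{\cc^*_\alpha}x^{m-d}V^i_0$, whose ratio to $\Psi^i_\alpha$ a priori depends on the critical value $u_i=x(\cp_i)$, so that killing them branch point by branch point would require the $c_m$ to depend on $i$—whereas $p_k(x)$ must be a single polynomial in $x$ valid at all $N$ branch points. The resolution must come from the puncture residues at $\infty_l$ that the weighted bilinear identity supplies, which I expect to conspire with the branch-point residues to render the pairing genuinely diagonal; equivalently, the seeming leakage into lower descendant orders should be reorganised by the ancestor topological recursion relations satisfied by the $I_{g,n}$. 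Verifying this cancellation, exhibiting the universal monic $p_k$ explicitly, and pinning down the overall constant coming from the iterated integration by parts, is the part I expect to demand the most care.
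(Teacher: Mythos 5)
Your proposal is the same argument as the paper's own proof: dualise, insert the decomposition \eqref{TRCohFT}, reduce everything to orthogonality of the weighted contours against the canonical differentials $V^i_d$, and attack that orthogonality by integrating by parts through $V^i_d=d\bigl(V^i_{d-1}/dx\bigr)$, tuning the subleading coefficients of the monic polynomial to remove the lower-order terms. The paper's proof is exactly this, compressed into three lines: the integration-by-parts identity (stated there as $\int_{x^k\cc}V^i_k=\int_{\cc}V^i_0$, with the constant $(-1)^k k!$ silently dropped, so strictly speaking a \emph{monic} $p_k$ can only achieve orthogonality up to that factor), then the bare assertion that ``hence there exists a monic polynomial $p_k$ with $\int_{p_k(x)\cc_i}V^j_m=\delta_{ij}\delta_{km}$'', then the conclusion.

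Consequently, the obstacle you isolate---that the off-diagonal pairings $\int_{\cc^*_\alpha}x^m V^i_0$ need not be proportional to $\Psi^i_\alpha$ with an $i$-independent ratio, so that cancelling them seems to require coefficients depending on the branch point---is precisely the step the paper asserts without justification; you have not missed an argument that the paper supplies. Moreover your worry is substantive: on $x=z+1/z$ with $B=dz\,dz'/(z-z')^2$, taking $\cc$ to be the type-3 contour joining the two punctures and regularising the (logarithmically divergent) weighted integrals by a cut-off symmetric in $|x|$, one finds $\int_{\cc}x\,V^i_0=u_i\int_{\cc}V^i_0$ for $i=1,2$, so no single $c_0$ kills both branch points; orthogonality is recovered only by an appropriate choice of the principal-value prescription at the punctures, which confirms your expectation that the resolution lives in the puncture contributions rather than in the branch-point residues alone. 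Two further points that neither you nor the paper spell out: for $d\geq 1$ the differential $V^i_d$ has poles at \emph{every} zero of $dx$ (division by $dx$ creates poles at all $\cp_j$), so the residue bookkeeping in your weighted analogue of Proposition~\ref{thmRBI} must run over all branch points, not only $\cp_i$; and since the $V^i_0$-components of $\omega_{g,n}$ do not vanish on $x^{-1}(\infty)$, the weighted periods over the open contour types diverge, so the statement itself only acquires a meaning once a v.p.\ prescription is fixed. In short, your reconstruction matches the paper's route, and the step you leave open is genuinely unproved in the paper as well; closing it requires exactly the regularisation analysis you anticipate.
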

\begin{proof}
Using integration by parts, we see that the contour $x^k\cc_i$ acts on the differential $V^i_k(p)$ by
$$\int_{x^k\cc_i}V^i_k(p)=\int_{\cc_i}V^i_0(p).
$$
Hence there exists a monic polynomial $p_k(x)=x^k+...$ of degree $k$ in $x$ such that
$$\int_{p_k(x)\cc_i}V^j_m(p)=\delta_{ij}\delta_{km}.
$$
Define $\cc^*_{\alpha,k}=p_k(x)\cc^*_{\alpha}$, for $\cc_{\alpha}\in\cd$ then we have
\beq
\int_{\cc^*_{\alpha_1,k_1}}...\int_{\cc^*_{\alpha_n,k_n}}\omega_{g,n}=\langle\prod_{j=1}^n\tau_{k_j}(e_{\alpha_j})\rangle.
\eeq
\end{proof}

\section{Topological recursion for compact spectral curves}  \label{sec:compact}
In this section we associate to a spectral curve a so-called $\hat{R}(z)$ matrix, which in the case of spectral curves lying in the image of the map \eqref{DOSS} from CohFTs to spectral curves, coincides with the $R(z)$ matrix of the Frobenius manifold.

\begin{definition}
Given a spectral curve $(\Sigma,x,y,B)$ define a formal series
$$\hat{R}(z)  =  \sum_{k=0}^\infty \hat{R}_k z^k$$
with coefficients $N \times N$ matrices where $N=$ number of zeros of $dx$ by
\beq \label{rhat}
\left[\hat{R}^{-1}(z)\right]_{j}^i := -  \frac{\sqrt{z}}{\sqrt{2\pi}} \int_{\Gamma_j} e^{-\frac{(x(p)-u_j) }{ z}}  \, B(p,\cp_i).
\eeq
\end{definition}
In fact  $\hat{R}(z)$ depends only on $(\Sigma,x,B)$.  This definition begins with the spectral curve and produces $\hat{R}(z)$ which reverses the direction of \eqref{eq:Identification-MatrixR} where one begins with a Frobenius manifold and its associated $R(z)$ and produces a spectral curve.  In general $\hat{R}(z)$ will not arise out of a Frobenius manifold.
\begin{remark} Note that $\left[\hat{R}^{-1}(z)\right]_{j}^i$ is well-defined for $i=j$ because the integrand has a pole of residue zero at $\cp_i$, so $\Gamma_i$ can be deformed to avoid $\cp_i$ in a well-defined manner.
\end{remark}
\begin{remark}
The paths $\Gamma_i$ were defined only locally in a neighbourhood of $\cp_i$ in Section~\ref{sec:DOSS}.  That is also sufficient here, because again we are only concerned with the asymptotic expansion of $\hat{R}(z)$ at $z=0$.  Nevertheless, we can choose paths along which $x/z\to\infty$ in both directions, such as a path of steepest descent of $-x/z$ so that the series $\hat{R}(z)$ converges. 
\end{remark}

Let us denote $\left[\hat{R}(z)\right]^i_{j} = \sum_k \left[\hat{R}_k\right]^i_{j} z^{-k}$. In particular, one has \beq\label{defhatR1} \left[\hat{R}_1\right]^i_{j}  =  B_{0,0}^{i,j}= B(\cp_i,\cp_j) . \eeq

\subsection{Factorisation property}
\label{sec-factor}
On a compact spectral curve $\hat{R}(z)$ shares the symplectic property of any $R(z)$ associated to a Frobenius manifold.  This is proven below as a consequence of a factorisation formula for the (Laplace transform of the) bidifferential $B$ in terms of $\hat{R}(z)$.  The factorisation formula is also required in the proof of Corollary~\ref{spec2cohft}.


 \bl [Eynard, \cite{EynardInv}]  \label{eynlem}
 Whenever the spectral curve is a Hurwitz cover of $\mathbb{P}^1$ with $dx$ a meromorphic form with simple zeroes, $\hat{R}(z)$---defined in \eqref{rhat}---satisfies the symplectic condition
\beq\label{eq-symplectic}
\hat{R}(z) \hat{R}^T(-z) = Id.
\eeq
Furthermore, the Laplace transform of a Bergman kernel
$$
\check{B}^{i,j}(z_1,z_2)  =
\frac{ e^{ \frac{u_i }{ z_1}+ \frac{u_j}{ z_2}}}{2 \pi \sqrt{z_1z_2}} \int_{\Gamma_i}
\int_{\Gamma_j} B(p,p') e^{- \frac{x(p) }{ z_1}- \frac{x(p')}{ z_2}} .
$$
satisfies
\beq\label{shape2}
\check{B}^{i,j}(z_1,z_2) =    - \frac{\sum_{k=1}^{N} \left[\hat{R}^{-1}(z_1)\right]^k_i \left[\hat{R}^{-1}(z_2)\right]^k_j }{ z_1+z_2}.
\eeq
 \el
 This means that the coefficients $B_{k,l}^{i,j}$ of the expansion of the Bergman kernel around the branch points $\cp_i$ and $\cp_j$ can be defined recursively in terms of the initial data $B_{k,0}^{i,j}$.  We give a proof here that differs from the proof in \cite{EynardInv}.
 \begin{proof}
 We have
 \begin{align}  \label{unberg}
\sum_{i=1}^N\Res_{q=\cp_i}\frac{B(p,q)B(p',q)}{dx(q)}&=-\Res_{q=p}\frac{B(p,q)B(p',q)}{dx(q)}-\Res_{q=p'}\frac{B(p,q)B(p',q)}{dx(q)}\\ \nonumber
 &=-d_p\left(\frac{B(p,p')}{dx(p)}\right)-d_{p'}\left(\frac{B(p,p')}{dx(p')}\right)
 \end{align}
 where the first equality uses the fact that the only poles of the integrand are $\{p,p',\cp_i,i=1,...,N\}$, and the second equality uses the Cauchy formula \eqref{cauchy} satisfied by the Bergman kernel.  The Laplace transform of the LHS of \eqref{unberg} is
 \begin{align*} 
 \frac{ e^{ \frac{u_i }{ z_1}+ \frac{u_j}{ z_2}}}{2 \pi \sqrt{z_1z_2}} \int_{\Gamma_i}
\int_{\Gamma_j} e^{- \frac{x(p) }{ z_1}- \frac{x(p')}{ z_2}}\sum_{k=1}^N\Res_{q=\cp_k}\frac{B(p,q)B(p',q)}{dx(q)}&
=\sum_{k=1}^N\frac{ e^{ \frac{u_i }{ z_1}+ \frac{u_j}{ z_2}}}{2 \pi \sqrt{z_1z_2}} \int_{\Gamma_i}
e^{- \frac{x(p) }{ z_1}}B(p,\cp_k) \int_{\Gamma_j}e^{- \frac{x(p')}{ z_2}}B(p',\cp_k)\\
&=\sum_{k=1}^N\frac{\left[\hat{R}^{-1}(z_1)\right]^k_i\left[\hat{R}^{-1}(z_2)\right]_{j}^k}{ z_1z_2}
 \end{align*}
and the Laplace transform of the RHS of \eqref{unberg} is
$$ -\frac{ e^{ \frac{u_i }{ z_1}+ \frac{u_j}{ z_2}}}{2 \pi \sqrt{z_1z_2}} \int_{\Gamma_i}
\int_{\Gamma_j} e^{- \frac{x(p) }{ z_1}- \frac{x(p')}{ z_2}}
\left\{d_p\left(\frac{B(p,p')}{dx(p)}\right)+d_{p'}\left(\frac{B(p,p')}{dx(p')}\right)\right\}=-\left(\frac{1}{z_1}+\frac{1}{z_2}\right)\check{B}^{i,j}(z_1,z_2)
 $$
since the Laplace transform satisfies
$$\int_{\Gamma_i} d\left(\frac{\omega(p)}{dx(p)}\right)e^{- \frac{x(p) }{ z}}=\frac{1}{z}\int_{\Gamma_i} \omega(p)e^{- \frac{x(p) }{ z}}.
$$
for any differential $\omega(p)$, by integration by parts.  Hence we see that the Laplace transform of \eqref{unberg} gives \eqref{shape2} as required.  Then \eqref{eq-symplectic} is a consequence of \eqref{shape2} and the finiteness of $\check{B}^{i,j}(z_1,z_2)$ at $z_2=-z_1$.
 \end{proof}




\subsection{Defining equation for $\hat{R}_{ij}(z)$}

\label{sec-Rauch}

In the preceding section we defined an $\hat{R}$-matrix from which is equivalent to the Bergman kernel.  When the Bergman kernel is normalised on a basis of geometric cycles, we can go further and compute all the terms $\left[\hat{R}_k\right]_{ij}$ in terms of some minimal quantities.  This uses the Rauch variational formula \eqref{Rauch} which allows us to derive an equation for the $\hat{R}$ matrix.

When a spectral curve lies in the image of the map \eqref{DOSS} from CohFTs to spectral curves, the following theorem is a consequence of the properties \eqref{eqdefR2}, \eqref{unR}, \eqref{homR} of the $R$ matrix of a CohFT.  For compact spectral curves, by Theorem~\ref{th:shr} generically $\hat{R}=R$, but $\hat{R}$ is a little more general, and for example exists when critical values $u_i$ coincide and $R$ is problematic, since it is defined over the semi-simple part of the Frobenius manifold.  The outcome of the following theorem is that $\hat{R}$ resembles $R$ and it can be used to give an alternative proof of Theorem~\ref{th:main}.

\begin{theorem}\label{th:rhatPDE}
Given a triple $(\Sigma,x,B)$ consisting of a compact Riemann surface $\Sigma$, a meromorphic function $x:\Sigma\to\bc$ with zeros of $dx$ simple, and a Bergman kernel $B$, then $\hat{R}(z)$ 
satisfies \eqref{eqdefR2}, \eqref{unR}, \eqref{homR}, i.e.
\begin{align}
 \label{PDER^}
d\hat{R}(z) &= \frac{\left[\hat{R}(z),dU\right] }{ z} - \hat{R}(z)  \left[\Gamma,dU\right],
\\
\label{unR^}
\un\cdot \hat{R}(z)&=0 ,\\
\label{homR^}
(z\partial_z+E)\cdot \hat{R}(z)&=0 .
\end{align}
\end{theorem}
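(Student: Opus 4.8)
The plan is to work throughout with $M(z):=\hat{R}^{-1}(z)$, whose entries $[\hat{R}^{-1}(z)]^i_j=M^i_j(z)$ are given directly by the integral \eqref{rhat}, and to reduce all three relations to statements about $M$; the corresponding statements for $\hat{R}$ then follow formally from $d\hat{R}=-\hat{R}\,(dM)\,\hat{R}$ and from the fact that $\un$ and $z\partial_z+E$ act as derivations. In this compact setting the matrix $\Gamma$ appearing in \eqref{PDER^} is the symmetric matrix of rotation coefficients $\beta_{ij}:=B(\cp_i,\cp_j)$, consistent with \eqref{defhatR1}, i.e. $\Gamma=\hat{R}_1$. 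The two technical ingredients are the Rauch formula \eqref{Rauch}, used for the off-diagonal $u$-derivatives, and two elementary symmetry arguments — invariance of the integrand of \eqref{rhat} under $x\mapsto x+\epsilon$ and under simultaneous scaling of $x$ and $z$ — which yield \eqref{unR^}, \eqref{homR^}, and, crucially, the diagonal part of \eqref{PDER^}.

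First I would establish the off-diagonal variational formula for the evaluated Bergman kernel: for $\ell\neq i$,
$$\partial_{u_\ell}B(p,\cp_i)=\beta_{i\ell}\,B(p,\cp_\ell).$$
This comes from applying \eqref{Rauch} to $B(p,p')$, evaluating the second argument at $\cp_i$ (which commutes with $\partial_{u_\ell}$ since the evaluation operator at $\cp_i$ depends only on $u_i$), and then using the factorisation $\Res_{q=\cp_\ell}\frac{B(p,q)\omega(q)}{dx(q)}=B(p,\cp_\ell)\,\omega(\cp_\ell)$ at the simple zero $\cp_\ell$, with $\omega(q)=B(\cp_i,q)$. Differentiating \eqref{rhat} in $u_\ell$ then produces two contributions: the explicit $e^{u_j/z}$ gives $\frac{\delta_{j\ell}}{z}M^i_j$, and the variation of $B(p,\cp_i)$ gives $\beta_{i\ell}M^\ell_j$. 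Hence for $\ell\neq i$,
$$\partial_{u_\ell}M^i_j=\frac{\delta_{j\ell}}{z}M^i_j+\beta_{i\ell}M^\ell_j,$$
which is precisely the coefficient of $du_\ell$ in the matrix equation $dM=\frac{[M,dU]}{z}+[\Gamma,dU]M$.

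The main obstacle is the diagonal derivative $\partial_{u_i}B(p,\cp_i)$, where the evaluation point and the Rauch residue collide and a direct computation is delicate. I would avoid it entirely using \eqref{unR^}, proved independently: the unit field $\un=\sum_k\partial_{u_k}$ generates the flow $(\Sigma,x)\mapsto(\Sigma,x+\epsilon)$, under which $x(p)-u_j$, the evaluated kernel $B(p,\cp_i)=\Res_{q=\cp_i}B(p,q)/\sqrt{2(x(q)-u_i)}$, the contour $\Gamma_j$, and the points $\cp_i$ are all manifestly invariant; thus the integrand of \eqref{rhat} is unchanged and $\sum_k\partial_{u_k}M^i_j=0$. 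Subtracting the already-known $\ell\neq i$ derivatives from this identity yields $\partial_{u_i}M^i_j$, and one checks that the result is exactly the $\ell=i$ coefficient of $dM=\frac{[M,dU]}{z}+[\Gamma,dU]M$ (the undefined diagonal $\beta_{ii}$ cancels automatically). Inverting this matrix equation gives \eqref{PDER^}, while the same identity $\sum_k\partial_{u_k}M=0$ gives \eqref{unR^} for $\hat{R}$.

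Finally, \eqref{homR^} follows from homogeneity: under the simultaneous scaling $x\mapsto\lambda x$, $z\mapsto\lambda z$ — the Euler flow of $E=\sum_i u_i\partial_{u_i}$ together with the $z$-grading — the exponent in \eqref{rhat} is invariant, the prefactor $\sqrt{z}$ scales by $\lambda^{1/2}$, and the evaluated kernel $B(p,\cp_i)$ scales by $\lambda^{-1/2}$ because of the $\sqrt{2(x(q)-u_i)}$ in the evaluation; these cancel, so each $M^i_j$ is invariant of degree $0$, giving $(z\partial_z+E)M=0$ and hence $(z\partial_z+E)\hat{R}=0$. I expect the bookkeeping of the off-diagonal Rauch step — keeping the evaluation formalism, the branch choices, and the identification $\beta_{ij}=B(\cp_i,\cp_j)$ consistent — together with the careful justification that the colliding-residue diagonal term is correctly recovered from the unit relation, to be where most of the care is needed.
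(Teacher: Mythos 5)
Your proposal is correct. Its treatment of \eqref{PDER^} coincides with the paper's proof: the off-diagonal derivatives $\partial_{u_\ell}\left[\hat R^{-1}(z)\right]^i_j=\delta_{j\ell}\,z^{-1}\left[\hat R^{-1}(z)\right]^i_j+\beta_{i\ell}\left[\hat R^{-1}(z)\right]^\ell_j$ come from Rauch's formula applied to the evaluated kernel, the colliding-residue diagonal term is recovered from \eqref{unR^} exactly as you do, and the equation for $\hat R$ follows by inverting (with $\beta_{ii}$ dropping out in both treatments, and your identification $\Gamma_{ij}=B(\cp_i,\cp_j)$ matching \eqref{defhatR1} and the paper's own use of $\beta_{ki}=B(\cp_k,\cp_i)$). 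Where you genuinely diverge is in proving \eqref{unR^} and \eqref{homR^}. The paper obtains both from Laplace-transform manipulations of Eynard's factorisation formula \eqref{shape2}: it shows $\sum_k\partial_{u_k}\check B^{i,j}(z_1,z_2)=0$ by a cancellation forced by \eqref{shape2} and then sets $z_2=0$; for the Euler relation it derives the homogeneity equation $\left[1+z_1\partial_{z_1}+z_2\partial_{z_2}+\sum_iu_i\partial_{u_i}\right]\check B^{i,j}=0$ from the Euler variation of $B$ and again takes the $z_2\to0$ limit. You instead use that the unit and Euler flows act on the data $(\Sigma,x,B)$ as $x\mapsto x+\epsilon$ and $x\mapsto\lambda x$, leaving $\Sigma$, $B$, the points $\cp_i$ and the contours untouched, so that the integral \eqref{rhat} is invariant---on the nose for the translation, and after the compensating substitution $z\mapsto\lambda z$ for the scaling, the $\lambda^{1/2}$ from $\sqrt z$ cancelling the $\lambda^{-1/2}$ from the evaluation $B(p,\cp_i)$. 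This route is more elementary and conceptual: it bypasses Lemma~\ref{eynlem} entirely for these two relations (and does not even need Rauch for them), exhibiting \eqref{unR^} and \eqref{homR^} as pure symmetry statements about affine reparametrisation of the target coordinate coupled to the grading in $z$. The one geometric input you should state explicitly is that on the Hurwitz space the flows of $\un$ and $E$ really are $x\mapsto x+\epsilon$ and $x\mapsto e^t x$ with the underlying curve, Torelli marking, and hence Bergman kernel fixed---true because these deformations preserve the critical points and translate, respectively scale, the critical values, which are local coordinates near a semisimple point; the paper uses this fact elsewhere but its proof here never needs it, and its manipulations of $\check B$ are ones it requires anyway for the symplectic condition \eqref{eq-symplectic}.
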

\begin{proof}
Although \eqref{unR^} is a consequence of \eqref{PDER^} we first prove  \eqref{unR^} and use this to prove \eqref{PDER^}.  \\

\noindent {\em Proof of} \eqref{unR^}: Differentiate Eynard's formula \eqref{shape2}
$$
\check{B}^{i,j}(z_1,z_2)  =
\frac{ e^{ \frac{u_i }{ z_1}+ \frac{u_j}{ z_2}}}{2 \pi \sqrt{z_1z_2}} \int_{\Gamma_i}
\int_{\Gamma_j}B(p,p') e^{- \frac{x(p) }{ z_1}- \frac{x(p')}{ z_2}} 
=\sum_{k=1}^{N}  \frac{\left[\hat{R}^{-1}(z_1)\right]_k^{i} \left[\hat{R}^{-1}(z_2)\right]_k^j }{ z_1+z_2}.
$$
to get
\begin{align*}
\sum_{k=1}^N\frac{\partial}{\partial u_k}\check{B}^{i,j}(z_1,z_2)&=\sum_{k=1}^N\frac{ e^{ \frac{u_i }{ z_1}+ \frac{u_j}{ z_2}}}{2 \pi \sqrt{z_1z_2}} \int_{\Gamma_i}
B(p,\cp_k) e^{- \frac{x(p) }{ z_1}}\int_{\Gamma_j}B(p',\cp_k)e^{- \frac{x(p')}{ z_2}}+\left(\frac{1 }{ z_1}+ \frac{1}{ z_2}\right)\check{B}^{i,j}(z_1,z_2)\\
&=\sum_{k=1}^N\frac{\left[\hat{R}^{-1}(z_1)\right]_{k}^i\left[\hat{R}^{-1}(z_2)\right]_{j}^k}{ z_1z_2}
-\sum_{k=1}^{N}  \frac{\left[\hat{R}^{-1}(z_1)\right]_k^{i} \left[\hat{R}^{-1}(z_2)\right]_k^j }{ z_1z_2}=0.
\end{align*}
Since $\left[\hat{R}^{-1}(z_1)\right]_j^{i} = - z\check{B}^{i,j}(z,0)$, we have 
$$\un\cdot\left[\hat{R}^{-1}(z_1)\right]_j^{i}=\sum_{k=1}^N\frac{\partial}{\partial u_k}\left[\hat{R}^{-1}(z_1)\right]_j^{i}= - z\sum_{k=1}^N\frac{\partial}{\partial u_k}\check{B}^{i,j}(z,0)=0,\quad \forall i,j$$
and since $\un\cdot \hat{R}(z)=0\Leftrightarrow\un\cdot \hat{R}^{-1}(z)=0$ this proves \eqref{unR^}.  \\

\noindent {\em Proof of} \eqref{PDER^}:  For $k\neq i$,
\begin{align}  \label{derR^}
{\partial \left[\hat{R}^{-1}(z)\right]^i_j \over \partial u_k}&= - {\partial\over \partial u_k}\frac{\sqrt{z}}{\sqrt{2\pi }} \int_{\Gamma_j} e^{-{(x(p)-u_j) \over z}}  \, B(p,\cp_i) \\ \nonumber
&=\delta_{k,j} \frac{1}{z}\left[\hat{R}^{-1}(z)\right]^i_j - \frac{\sqrt{z}}{\sqrt{2\pi }} \int_{\Gamma_j} e^{-{(x(p)-u_j) \over z}}  \, B(p,\cp_k) B(\cp_k,\cp_i)\\  \nonumber
&= \delta_{k,j}  { \left[\hat{R}^{-1}(z)\right]^i_j \over z}  + \left[\hat{R}^{-1}(z)\right]^k_j \beta_{ki} .
\end{align}
For $k=i$, by \eqref{unR^},
$${\partial \left[\hat{R}^{-1}(z)\right]^i_j \over \partial u_i}=-\sum_{m\neq i}{\partial \left[\hat{R}^{-1}(z)\right]^i_j \over \partial u_m}= (\delta_{i,j}-1){ \left[\hat{R}^{-1}(z)\right]^i_j \over z} - \sum_{m\neq i}  \left[\hat{R}^{-1}(z)\right]_j^m\beta_{mi} 
$$
which gives the $(i,j)$ component of the equation
$$
d\hat{R}^{-1}(z) = {\left[\hat{R}^{-1}(z),dU\right] \over z} +  \left[\Gamma,dU\right]\hat{R}^{-1}(z).
$$
Hence 
$$
d\hat{R}(z)=-\hat{R}(z)d\hat{R}^{-1}(z)\hat{R}(z) = -\hat{R}(z)\left({\left[\hat{R}^{-1}(z),dU\right] \over z} +  \left[\Gamma,dU\right]\hat{R}^{-1}(z)\right)\hat{R}(z)= {\left[\hat{R}(z),dU\right] \over z} - \hat{R}(z)  \left[\Gamma,dU\right]
$$
and \eqref{PDER^} holds.

\noindent {\em Proof of} \eqref{homR^}:  We begin with a variation of the proof of \eqref{shape2}, replacing the identity vector field with the Euler vector field.  We have
 \begin{align}   \label{eulberg}
\sum_{i} u_i {\partial \over \partial u_i}B(p,p')=\sum_{i=1}^Nu_i\Res_{q=\cp_i}\frac{B(p,q)B(p',q)}{dx(q)}&=\sum_{i=1}^N\Res_{q=\cp_i}\frac{x(q)B(p,q)B(p',q)}{dx(q)}\\\nonumber
&=-\Res_{q=p}\frac{x(q)B(p,q)B(p',q)}{dx(q)}-\Res_{q=p'}\frac{x(q)B(p,q)B(p',q)}{dx(q)}\\ \nonumber
 &=-d_p\left(\frac{x(p)B(p,p')}{dx(p)}\right)-d_{p'}\left(\frac{x(p')B(p,p')}{dx(p')}\right).
 \end{align}
Then
\begin{align*} 
 \left(1 + z_1 {\partial \over \partial z_1} + z_2 {\partial \over \partial z_2}   +\frac{u_i}{z_1}+\frac{u_j}{z_2}\right) \check{B}^{i,j}(z_1,z_2)&
 =\frac{ e^{ {u_i \over z_1}+ {u_j\over z_2}}}{2 \pi \sqrt{z_1z_2}} \left(z_1 {\partial \over \partial z_1} + z_2 {\partial \over \partial z_2}\right)\int_{\Gamma_i}
\int_{\Gamma_j} e^{- {x(p) \over z_1}- {x(p')\over z_2}}B(p,p')\\
&=-\frac{ e^{ {u_i \over z_1}+ {u_j\over z_2}}}{2 \pi \sqrt{z_1z_2}} \int_{\Gamma_i}
\int_{\Gamma_j} \sum_{k} u_k {\partial \over \partial u_k}B(p,p')\\
&=\left(\frac{u_i}{z_1}+\frac{u_j}{z_2} -\sum_{k} u_k {\partial \over \partial u_k}\right)\check{B}^{i,j}(z_1,z_2)
\end{align*} 
where the second equality uses \eqref{eulberg} and integration by parts to show that for any differential $\omega(p)$ the Laplace transform satisfies
$$\int_{\Gamma_i} e^{- {x(p) \over z}}d\left(\frac{x(p)\omega(p)}{dx(p)}\right)=
z\frac{d}{dz}\int_{\Gamma_i}e^{- {x(p) \over z}}\omega(p).
$$
Hence we are left with the following equation which is essentially the Laplace transform of \eqref{eulberg}:
\beq  \label{eulR^}
\left[1 + z_1 {\partial \over \partial z_1} + z_2 {\partial \over \partial z_2}  + \sum_{i} u_i {\partial \over \partial u_i} \right] \check{B}(z_1,z_2) = 0.
\eeq
We will now take the $z_2\to0$ limit of \eqref{eulR^}.  From Eynard's formula \eqref{shape2} we see that ${\partial \over \partial z_2} \check{B}(z_1,z_2)$ is well-defined at $z_2=0$, hence  $\displaystyle{\lim_{z_2\to0}}z_2\tfrac{\partial}{\partial z_2} \check{B}(z_1,z_2)=0$.  We also have $\check{B}^{j,k}(z_1,0)= - \frac{1}{z_1}\left[\hat{R}^{-1}(z_1)\right]^j_k$.  Thus the $z_2\to0$ limit of \eqref{eulR^} becomes
$$
0=\left[1 + z_1 {\partial \over \partial z_1}   + \sum_{i} u_i {\partial \over \partial u_i} \right] \frac{1}{z_1}\left[\hat{R}^{-1}(z_1)\right]^j_k=
 \frac{1}{z_1}\left[ z_1 {\partial \over \partial z_1}+\sum_{i} u_i {\partial \over \partial u_i} \right]\left[\hat{R}^{-1}(z_1)\right]^j_k
$$
which gives \eqref{homR^}.
\end{proof}





\begin{remark}
A spectral curve $(\Sigma,x,y,B)$ with $dy$ a primary differential is dominant---see Definition~\ref{def:dominant}---hence by Corollary~\ref{spec2cohft} it corresponds to a CohFT, which we have identified with the Hurwitz Frobenius manifold corresponding to primary differential.  More generally we can take $dy$ to be any linear combination of primary differentials, which is no longer a primary differential hence Theorem~\ref{th:main} does not apply, but the spectral curve is still dominant and hence corresponds to a CohFT.
\end{remark}

\section{Topological recursion for families of spectral curves}
Vector fields on the Frobenius manifold $\widetilde{H}_{g,\mu}$ can give rise to recursion relations between ancestor invariants.  In this section we show how the vector fields act on the multidifferentials $\omega_{g,n}$ arising out of  topological recursion and give rise to the recursion relations between ancestor invariants.

Over the Frobenius manifold $\widetilde{H}_{g,\mu}$ is a {\em universal curve} which is a family of spectral curves constructed via the underlying Hurwitz map $(\Sigma,x)$ together with natural cycles on $\Sigma$ used to define the full spectral curve $(\Sigma,x,y,B)$.  Note that topological recursion applied to a single spectral curve produces a CohFT which extends uniquely to a family of CohFTs, nicely encoded in a Frobenius manifold, and each giving rise to a corresponding spectral curve.  Hence in this way the family of spectral curves is reconstructed from any single spectral curve in the family.


Consider the family of multidifferentials $\omega_{g,n}$ obtained by applying topological recursion to the universal curve.  We can differentiate the multidifferentials $\omega_{g,n}$ with respect to vector fields on the Frobenius manifold $\widetilde{H}_{g,\mu}$.  As usual, for any vector field $v\in\Gamma(T\widetilde{H}_{g,\mu})$ we choose a lift $\tilde{v}\in\Gamma(TC)$ where $C$ is the universal curve over $\widetilde{H}_{g,\mu}$, so that $\tilde{v}\cdot x=0$.  We abuse terminology and write $\tilde{v}=v$.

First order deformations of topological recursion are described in \cite{EOInv}. There it is shown that deformations of $\omega_{0,1}$ propagate via the recursion to determine deformations of $\omega_{g,n}$.  Specifically, for $v$ a vector field on $\widetilde{H}_{g,\mu}$, if we can express the variation of $ydx$ as an integral of $B$ over a generalised contour $\cal C$, then the variation of $\omega_{g,n}$ uses the same contour as follows.
\beq \label{deform}
v\cdot ydx(p)=\int_\cc B(p',p)\quad\Rightarrow\quad v\cdot\omega_{g,n}(p_1,...,p_n)=\int_\cc\omega_{g,n+1}(p',p_1,...,p_n).
\eeq

Deformations with respect to natural vector fields on the Frobenius manifold correspond to relations between correlators in the CohFT.  In the remainder of this section we describe the dictionary between deformations by the unit and Euler vector fields and their realisations via topological recursion.

\subsection{Identity vector field.}
When $v=\un$ is the identity vector, we have
$$\un\cdot ydx|_{x\text{ fixed}} =-\un\cdot xdy|_{y\text{ fixed}}=-dy=-\Res_{p'=p}y(p')B(p,p')=\sum_P\Res_{p'=P}y(p')B(p,p')$$
where the sum is over the poles $P$ of $y$.  Hence by \eqref{deform}
\beq  \label{string}
\un\cdot \omega_{g,n} =\sum_P\Res_{p'=P}y(p')\omega_{g,n+1}=-\sum_i\Res_{p'=\cp_i}y(p')\omega_{g,n+1}.
\eeq
We can calculate the action of $\un$ on $\omega_{g,n}$ in a different way via the lift of $\un$ to the universal curve.  Note that there are flat coordinates $t_1,.,,.t_N$ such that $\un=\partial/\partial t_1$ where $t_1$ appears in $x$ as $x=x_0+t_1$ for $x_0$ independent of $t_1$.  The lift $\un$ necessarily annihilates $x$ so with respect to a local parameter $z$ on $\Sigma$
$$ 0=\un\cdot x=x'(z)\un\cdot z +1\quad\Rightarrow \quad \un\cdot z=-1/x'(z)
$$
where we used the explicit partial derivative $\partial_{t_1}x=1$.  Hence for any differential $\xi$ with no explicit $t_1$ dependence, locally $\xi=df$ so we have
$$\un\cdot \xi(z)=d\un\cdot f(z)=d(f'(z)\un\cdot z)=-d(f'(z)/x'(z))=-d(df/dx)=-d(\xi/dx).
$$
In other words the lift of $\un$ coincides on fibres with the operator
$$\un=-\frac{d}{dx}
$$
which acts on functions or differentials.  In particular, $\xi_k^{\alpha}$ have no explicit $t_1$ dependence so
$$\un\cdot\xi_k^{\alpha}=-d\left(\frac{\xi_k^{\alpha}}{dx}\right)=-\xi_{k+1}^{\alpha}.
$$
Furthermore $\omega_{g,n}$ has no explicit $t_1$ dependence since topological recursion is unchanged under $x\mapsto x+t_1$.  Hence
$$\un\cdot\omega_{g,n}=-\sum_{j=1}^nd\left(\frac{\omega_{g,n}(p_1,...,p_n)}{dx(p_j)}\right).
$$
The relation
$$
\sum_i\Res_{p'=\cp_i}y(p')\omega_{g,n+1}=-\sum_{j=1}^nd\left(\frac{\omega_{g,n}(p_1,...,p_n)}{dx(p_j)}\right)
$$
is proven in a different way in \cite{EOInv} as a direct consequence of topological recursion.  Here we have shown it to be a consequence of the action of the lift of the identity vector field on the universal curve.

The Hurwitz Frobenius manifold from Section~\ref{sec:hur} have flat identity hence the CohFT satisfies the pull-back relation \eqref{flatid}.  A consequence of \eqref{flatid} on correlators is known as the string equation which expressed in tau notation:
$$\left\langle \prod_{i=1}^n\tau_{k_i}(v_i)\right\rangle_g:=
\int_{\overline{\modm}_{g,n}}I_{g,n}(v_1\otimes\cdots\otimes v_n)\prod_{i=1}^n\psi_i^{k_i}$$
is given by
$$\left\langle \tau_0(1)\tau_{k_1}(v_1)\cdots \tau_{k_n}(v_n)\right\rangle_g=\sum_{i=1}^n\left\langle \tau_{k_1}(v_1)\cdots\tau_{k_i-1}(v_i)\cdots \tau_{k_n}(v_n)\right\rangle_g.$$
But this is precisely equivalent to the relation \eqref{string} since 
$$d\left(\frac{\xi_k^{\alpha}}{dx(p_j)}\right)=\xi_{k+1}^{\alpha}
$$
where coefficients of $\xi_k^{\alpha}$ correspond to insertions of the vector field $\partial/\partial t_{\alpha}$.

\subsection{Euler vector field.}

When $v=E$ is the Euler vector field, we have
\beq \label{euler}
E\cdot ydx|_{x\text{ fixed}} =-E\cdot xdy|_{y\text{ fixed}}=-xdy
=\Res_{p'=p}(\Phi-xy)(p')B(p,p')=-\sum_P\Res_{p'=P}(\Phi-xy)B(p,p')
\eeq
where $d\Phi=ydx$ and the sum is over the poles $P$ of $\Phi-xy$.  Hence
\begin{align} \label{eulerTR}
E\cdot \omega_{g,n} &=-\sum_P\Res_{p'=P}(\Phi-xy)\omega_{g,n+1}=\sum_i\Res_{p'=\cp_i}(\Phi-xy)\omega_{g,n+1}\\ \nonumber
&=(2g-2+n)\omega_{g,n}(p_1,...,p_n)-\sum_{j=1}^nd\left(\frac{x(p_j)\omega_{g,n}(p_1,...,p_n)}{dx(p_j)}\right)
\end{align}
where the last equality uses the dilaton and second string equation satisfied quite generally by the $\omega_{g,n}$, proven in \cite{EOInv}.  Analogous to the string equation above, which enables one to remove or insert the identity vector field, this last expression enables one to remove or insert the Euler vector field in correlators.  For example, in the Gromov-Witten case, it is given by the divisor equation.  

A conformal Frobenius manifold corresponds to a homogeneous CohFT.  A CohFT is {\em homogeneous} of weight $d$ if
\begin{equation}  \label{homog}
((g-1)d+n)I_{g,n}=\deg I_{g,n}(v_1\otimes...\otimes v_n)-\sum_{j=1}^n I_{g,n}(v_1\otimes...\otimes[E,v_j]\otimes... \otimes v_n)+\pi_*I_{g,n+1}(v_1\otimes... \otimes v_s\otimes E)
 \end{equation}
where $E$ is the Euler vector field and $\pi:\overline{\modm}_{g,n+1}\to\overline{\modm}_{g,n}$ is the forgetful map.  Equation \eqref{homog} allows one to 
remove or insert the Euler vector field in correlators and \eqref{euler} is equivalent to this relation.

\end{document}